\newcommand{\id}{ \mathrm{id}}
\newcommand{\Aut}{\mathrm{Aut}}
\renewcommand{\epsilon}{\varepsilon}
\newcommand{\e}{\mathrm{e}}
\DeclareFontFamily{OT1}{pzc}{}
\DeclareFontShape{OT1}{pzc}{m}{it}{<-> s * [1.1] pzcmi7t}{}
\DeclareMathAlphabet{\mathpzc}{OT1}{pzc}{m}{it}
\keywords{}
\newcommand{\norm}[1]{\left\lVert#1\right\rVert}
\subjclass[2010]{}
\DeclareRobustCommand{\oni}{%
 \daleth
}
\DeclareRobustCommand{\ghani}{%
 \beth
}
\newtheorem{theorem}{Theorem}[section]
\newtheorem{proposition}[theorem]{Proposition}
\newtheorem{conjecture}[theorem]{Conjecture}
\newtheorem{corollary}[theorem]{Corollary}
\newtheorem{lemma}[theorem]{Lemma}
\theoremstyle{definition}
\newtheorem{definition}[theorem]{Definition}
\date{\today}
\begin{document} 

\lhead{CCR and CAR algebras are connected via a path of Cuntz-Toeplitz algebras}
\rhead{}
\author{
  Alexey Kuzmin
  \\
}
\title{CCR and CAR algebras are connected via a path of Cuntz-Toeplitz algebras}
\maketitle

\begin{abstract}
For $q \in \mathbb{R}$, $|q| < 1$ we consider the universal enveloping $C^*$-algebra of a $*$-algebra of $q$-canonical commutation relations ($q$-CCR), which is generated by $a_1, \ldots, a_n$ subject to the relations
\[ a_i^* a_j = \delta_{ij} 1 + q a_j a_i^* . \]
It has a distinguished representation $\pi_F$ called the Fock representation, which is believed to be faithful. In this article we denote the image of the universal enveloping $C^*$-algebra of $q$-CCR in the Fock representation by $\ghani_q$. The question whether $C^*$-isomorphism $\ghani_q \simeq \ghani_0$ holds has been considered in the literature and proved for $|q| < 0.44$. In this article we show that $\ghani_q \simeq \ghani_0$ for $|q| < 1$.
\end{abstract}
\tableofcontents

\section{Introduction}
A broad class of operator algebras studied in the literature are universal enveloping $C^*$-algebras of $*$-algebras defined by polynomial relations:
\[ \mathbb{C}(V(p)) = \mathbb{C}[x_1,\ldots,x_n,x_1^*,\ldots,x_n^*] / (p_\alpha(x_1,\ldots,x_n,x_1^*,\ldots,x_n^*) = 0)_{\alpha \in I}, \]
where the variables do not commute. Such $*$-algebras are objects of study of noncommutative algebraic geometry. If $\mathbb{C}(V(p))$ has bounded $*$-representations on a Hilbert space, one can consider the universal enveloping $C^*$-algebra of "continuous functions on the noncommutative algebraic variety $V(p)$": \[C(V(p)) = C^*(\mathbb{C}(V(p))). \] This $C^*$-algebra encodes "topology" of the corresponding noncommutative variety. Unfortunately, up to a restricted knowledge of the author, there has been not so many examples of interplay between noncommutative algebraic geometry and operator algebras. Thus the author wishes to highlight the possibility of such point of view on $C^*$-algebras generated by generators and relations. In this paper we will consider a special class of "quadratic noncommutative curves" called Wick algebras. We will be interested in the "topological uniformization" of such noncommutative quadrics, i.e. in classification of $C(V(p))$ up to $C^*$-isomorphism for a certain class of quadratic noncommutative polynomials $p$.

Wick algebras have originated not from algebraic geometry, but from the study of non-classical models of mathematical physics, quantum group theory and noncommutative probability (see e.g., \cite{BoSpe2,fiv,green,mac,MPe,zag,Dalet,jeu_pinto,jps2,LiM,yakym,Popescu,prolett}), which gave rise to a number of papers on operator algebras generated by various deformed commutation relations \cite{BoSpe,Klimek,mar}, which are $C^*$-algebras of noncommutative algebraic varieties. They include deformations of canonical commutation relations of quantum mechanics, some quantum groups and quantum homogeneous spaces, see e.g., \cite{gisselson,Klimyk,vaksman1,woronowicz}. 

Also Wick algebras can be considered as deformations of Cuntz-Toeplitz algebras, see \cite{cun,dn,jsw,MR1291240,MR1918355,MR1893475}. This point of view will be considered in this article. Below we give more details.

Let us formally define Wick algebras. They were defined in \cite{jsw}.
For $\{T_{ij}^{kl},\ i,j,k,l=\overline{1,n}\}\subset\mathbb C$, $T_{ij}^{kl}=\overline{T}_{ji}^{lk}$, the Wick algebra  $W(T)$
is the $*$-algebra generated by elements $a_j$, $a_j^*$, $j=\overline{1,n}$ subject to the relations
\[
a_i^*a_j=\delta_{ij}\mathbf 1+\sum_{k,l=1}^n T_{ij}^{kl} a_l a_k^*.
\]
It depends \cite{jsw}  on the so called operator of coefficients $T$, 
 given as follows. Let $\mathcal{H}=\mathbb C^n$ and $e_1$, \dots, $e_n$ be the standard orthonormal basis, then
\[
T\colon \mathcal{H}^{\otimes 2}\rightarrow \mathcal{H}^{\otimes 2},\quad
T (e_k\otimes e_l)=\sum_{i,j=1}^d T_{ik}^{lj} e_i\otimes e_j.
\]
It is a non-trivial and central problem in the theory of Wick algebras to determine whether a Fock representation $\pi_{F}$ of a Wick algebra exists, see \cite{BoSpe,jps,jsw}. Fock representation is determined uniquely up to a unitary equivalence by the following property: there exists a cyclic vector $\Omega$ such that $ \pi_F(a_i^*)(\Omega) = 0$ for $i = 1, \ldots, n $. The problem of existence and uniqueness of $\pi_F$ was studied in \cite{fiv}, \cite{Bozejko}, \cite{Zagier_positivity} and in \cite{jsw} for a more general class of Wick algebras. For some sufficient conditions it exists, for example if $T$ is braided, i.e., $(\mathbf 1\otimes T)(T\otimes\mathbf 1)
(\mathbf 1\otimes T)=(T\otimes\mathbf 1)
(\mathbf 1\otimes T)(\mathbf 1\otimes T)$, and if $\lVert T\rVert\leq 1$; moreover, if $\lVert T\rVert<1$ then $\pi_{F}$ is a bounded faithful representation of $W(T)$.

Another important question concerns the stability of isomorphism classes of the universal $C^*$-envelope
$\mathcal{W}(T) = C^*(W(T))$. It was conjectured in  \cite{jsw2}:
\begin{conjecture}\label{q_stab}
If  $T$ is  self-adjoint, braided and $||T|| < 1$, then $\mathcal{W}(T) \simeq \mathcal{W}(0)$.
\end{conjecture}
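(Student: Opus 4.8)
The plan is to realize every $\mathcal{W}(T)$ concretely and connect it to $\mathcal{W}(0)$ through a continuous path, reducing the isomorphism problem to a rigidity property of the Cuntz--Toeplitz algebra. First, since $T$ is self-adjoint, braided and $\|T\| < 1$, the Fock representation $\pi_F$ is bounded and faithful, so $\mathcal{W}(T)$ may be identified with the concrete $C^*$-algebra $\mathcal{A}_T \subset \mathcal{B}(\mathcal{F}_T)$ generated by the creation operators $a_1,\dots,a_n$ on the $T$-deformed Fock space $\mathcal{F}_T = \bigoplus_{k\geq 0}\mathcal{H}^{\otimes k}$; for $T=0$ this is exactly the Cuntz--Toeplitz algebra $\mathcal{E}_n$ generated by $n$ isometries with orthogonal ranges.

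Second, I would interpolate: set $T_s = sT$ for $s \in [0,1]$, which is again self-adjoint, braided, and of norm $s\|T\| < 1$. Carrying all the deformed inner products on the single graded space $\bigoplus_{k\geq 0} \mathcal{H}^{\otimes k}$ via the Fock (Gram) operators, and using that these operators are strictly positive and depend polynomially (hence continuously) on $s$ as long as $\|T_s\|<1$, one obtains a continuous field of Hilbert spaces and hence a continuous field $(\mathcal{A}_{T_s})_{s\in[0,1]}$ of $C^*$-algebras whose generating sections $s \mapsto a_i(s)$ are norm-continuous. This is the ``path of Cuntz--Toeplitz algebras'' of the title.

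Third, I would pin down the fiberwise structure and invoke rigidity. The aim is to show that each fiber contains the ideal $\mathcal{K}$ of compact operators, with the creation operators descending to a Cuntz family that generates $\mathcal{O}_n$, so that every $\mathcal{A}_{T_s}$ sits in a Toeplitz-type extension $0 \to \mathcal{K} \to \mathcal{A}_{T_s} \to \mathcal{O}_n \to 0$. Since $\mathcal{O}_n$ is nuclear and these extensions are semisplit, they are classified by their Busby invariants, equivalently by a class in $KK^1(\mathcal{O}_n,\mathbb{C})$; continuity of the field forces this class to be locally constant in $s$, hence equal to the Cuntz--Toeplitz class at $s=0$. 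Combining a Voiculescu-type uniqueness theorem for such extensions with semiprojectivity of $\mathcal{E}_n$ then propagates the isomorphism $\mathcal{A}_{T_s}\cong \mathcal{E}_n$ along the whole interval, and evaluation at $s=1$ yields $\mathcal{W}(T)\cong\mathcal{W}(0)$.

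The main obstacle is the third step, specifically the \emph{uniform} identification of the fiber structure as $\|T_s\|\to 1$. The earlier restriction $|q| < 0.44$ in the literature comes precisely from norm estimates used to show that $\mathcal{A}_T \supseteq \mathcal{K}$ and that the $a_i$ descend to a Cuntz family generating $\mathcal{O}_n$; these estimates degrade as the deformation grows. The key new input I would seek is a way to bypass pointwise estimates: rather than verifying the extension structure for each fixed large $T$, use the homotopy together with the semiprojectivity of $\mathcal{E}_n$ to transport the structure from the well-understood region near $s=0$, controlling only the continuity of the field and not the size of individual defects. Making this transport rigorous near the degenerate endpoint---where the Fock operators may approach the boundary of strict positivity---is the delicate technical point.
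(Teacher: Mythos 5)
Your very first step already contains the fatal gap. You identify $\mathcal{W}(T)=C^*(W(T))$ with the concrete algebra generated by the creation operators on the $T$-deformed Fock space, citing that $\pi_F$ is bounded and faithful when $\|T\|<1$. But that faithfulness is a statement about the $*$-algebra $W(T)$ only; whether $\pi_F$ is faithful on the universal enveloping $C^*$-algebra is not known, even for $q$-CCR and a single fixed $|q|<1$ — the paper stresses exactly this point ("results concerning $\ghani_q$ cannot be automatically lifted to the universal $C^*$-algebra"). Consequently, everything you build afterwards can at best prove that the Fock images are isomorphic; it cannot reach the conjectured isomorphism $\mathcal{W}(T)\simeq\mathcal{W}(0)$. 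Note in this connection that the paper itself does not prove Conjecture \ref{q_stab}: it is stated and left open, and what is actually proved is the Fock-image statement $\ghani_q\simeq\ghani_0\simeq\mathbb{K}\mathcal{O}_n$ for the special family $q$-CCR, $|q|<1$.

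Even read as an attempt on that weaker Fock-image statement, your argument has two unproven steps at its core. First, the continuous field: strict positivity and polynomial dependence of the Gram operators give continuity of the generators as sections, but a continuous field of $C^*$-algebras requires continuity of $s\mapsto\|p(a_1(s),\ldots,a_n(s),a_1(s)^*,\ldots,a_n(s)^*)\|$ for every noncommutative $*$-polynomial $p$, and this (in particular the lower semicontinuity) is not established and is of comparable difficulty to the original problem. Second, the transport step: semiprojectivity of $\mathbb{K}\mathcal{O}_n$ yields, for $s$ near a point where the fiber is known, a $*$-homomorphism $\mathbb{K}\mathcal{O}_n\to\mathcal{A}_{T_s}$ carrying the Cuntz--Toeplitz generators to elements close to $a_i(s)$, but \emph{surjectivity} of such a map — i.e., that an exact Cuntz family near the $a_i(s)$ generates all of $\mathcal{A}_{T_s}$ — is precisely the quantitative issue that produced the $|q|<0.44$ bound in the literature; you acknowledge this is the delicate point but supply no mechanism for it, so the crux of the proof is missing. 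For contrast, the paper's route to $\ghani_q\simeq\ghani_0$ is non-perturbative and uses no continuity in $q$ at all: it shows the gauge-invariant subalgebra $\oni_q^{\mathbb{T}}$ of the quotient $\oni_q=\ghani_q/\mathbb{K}$ has approximately inner flip and is isomorphic to the UHF algebra $\mathfrak{U}_{n^\infty}$, realizes $\oni_q$ as a Stacey crossed product $\oni_q^{\mathbb{T}}\rtimes\mathbb{N}$, concludes $\oni_q\simeq\mathcal{O}_n$ by Kirchberg--Phillips classification, and finally classifies the unital extension $0\to\mathbb{K}\to\ghani_q\to\oni_q\to 0$ via the Gabe--Ruiz theorem.
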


In particular, the authors of \cite{jsw2} have shown that the conjecture holds for the case $||T|| < \sqrt{2} - 1$, for more results on the subject see \cite{dn}, \cite{nk}.

In the case $T=0$ and $n=\dim \mathcal{H} =1$, the Wick algebra $W(0)$ is generated by a single isometry $s$, its  universal $C^*$-algebra exists and is isomorphic to the $C^*$-algebra generated by the unilateral shift, and the Fock representation is faithful. The ideal $\mathcal I$ in $\mathcal E$, generated by $\mathbf 1 - ss^*$ is isomorphic to the algebra of compact operators and $\mathcal E/\mathcal I\simeq C(S^1)$, see \cite{coburn}. When $n\ge 2$, the enveloping universal $C^*$-algebra   exists and it is called the Cuntz-Toeplitz agebra $\mathbb{K}\mathcal{O}_n$. It is isomorphic to $C^*(\pi_{F}(W(0)))$, so the Fock representation of $\mathbb{K}\mathcal{O}_n$ is faithful, see \cite{cun}. Furthermore, the ideal generated by $1-\sum_{j=1}^n s_js_j^*$ is the unique largest ideal in $\mathbb{K}\mathcal{O}_n$. It is isomorphic to the algebra of compact operators on $\mathcal F_n$. The quotient $\mathbb{K}\mathcal{O}_n/\mathbb{K}$ is called the Cuntz algebra $\mathcal O_n$. It is nuclear (as well as $\mathbb{K}\mathcal{O}_n$),  simple and purely infinite, see \cite{cun} for more details.

Among Wick algebras, considerable attention has been paid to the study of so-called $q$-CCR introduced by M. Bozejko and R. Speicher, see \cite{Bozejko} which as a Wick algebra corresponds to the operator $T(x \otimes y) = q y \otimes x$. Assume that $q \in \mathbb{R}$, $|q| < 1$ . Define $q$-CCR to be a $*$-algebra generated by elements $a_i, a_i^*$, $i = 1, \ldots, n$, satisfying the following relations:
\[ a_i^* a_j = \delta_{ij} + qa_j a_i^*. \]
It is a deformation of $*$-algebras of the classical commutation relations in the sense that in the Fock realisation, the limiting cases $q = 1$ and $q = -1$ correspond to $*$-algebras of the canonical commutation relations (CCR) and the canonical anti-commutation relations (CAR) respectively.

It can easily be verified that in any $*$-representation $\pi$ of the $*$-algebra $q$-CCR by bounded operators one has
\[ \norm{\pi(a_i)} \leq \frac{1}{\sqrt{1-|q|}}, \text{ } i = 1, \ldots, n. \]
Hence, there exists a universal enveloping $C^*$-algebra associated to $q$-CCR. We denote it's image in the Fock representation by $\ghani_q$. See more about the Fock representation in \cite{BoLyt}.

Let us formulate the main result of this article:
\begin{theorem}
Let $|q| < 1$, then
\[ \ghani_q \simeq \ghani_0 \simeq \mathbb{K}\mathcal{O}_n. \]
\end{theorem}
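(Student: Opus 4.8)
The plan is to exhibit the whole family $\{\ghani_q\}_{|q|<1}$ as a continuous field of $C^*$-algebras over the interval $(-1,1)$ in which every fibre is an essential unital extension of the Cuntz algebra $\mathcal{O}_n$ by the compacts $\mathbb{K}$, and then to deduce that the field is trivial. First I would put all the fibres on one Hilbert space: on the full Fock space $\mathcal{F}=\bigoplus_{m\ge 0}\mathcal{H}^{\otimes m}$ over $\mathcal{H}=\mathbb{C}^n$ the free creation operators $\ell_1,\dots,\ell_n$ satisfy $\ell_i^*\ell_j=\delta_{ij}$ and generate $\mathbb{K}\mathcal{O}_n$. The Fock inner product for parameter $q$ is $\langle\,\cdot\,,\,\cdot\,\rangle_q=\langle P_q\,\cdot\,,\,\cdot\,\rangle$, where $P_q=\bigoplus_m P_m(q)$ is the symmetrisation operator, strictly positive on each $\mathcal{H}^{\otimes m}$ for $|q|<1$ (Bozejko--Speicher positivity, which is exactly what makes the cited Fock representation bounded and faithful). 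Conjugating by the unitary $P_q^{1/2}\colon(\mathcal{F},\langle\,\cdot\,,\,\cdot\,\rangle_q)\to(\mathcal{F},\langle\,\cdot\,,\,\cdot\,\rangle)$ identifies $\ghani_q$ with the concrete algebra generated by $b_i(q)=P_q^{1/2}\ell_i\,P_q^{-1/2}$ on the standard Fock space; these generators are bounded by the a priori estimate $\|b_i(q)\|\le(1-|q|)^{-1/2}$, they satisfy $b_i(q)^*b_j(q)=\delta_{ij}+q\,b_j(q)b_i(q)^*$, and $b_i(0)=\ell_i$. Since each $P_m(q)$ is a matrix polynomial in $q$, the map $q\mapsto b_i(q)$ is continuous, which is the raw material for the continuous field.

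The structural heart of the argument is to prove, for every $|q|<1$, that $\mathbb{K}\subseteq\ghani_q$ and that $\ghani_q/\mathbb{K}\simeq\mathcal{O}_n$. The inclusion reduces to producing the vacuum projection $P_\Omega$ inside the algebra, since the rank-one operators $a_\alpha\,P_\Omega\,a_\beta^*=|a_\alpha\Omega\rangle\langle a_\beta\Omega|$ then exhaust $\mathbb{K}$ by cyclicity of $\Omega$, and $\mathbb{K}$ becomes an essential ideal. I expect $P_\Omega\in\ghani_q$ to be the principal obstacle: unlike the free case, $1-\sum_i b_i(q)b_i(q)^*$ is \emph{not} compact (on the two-particle space it equals $-q$ times the flip), and the polar-decomposition route that would express $P_\Omega$ through isometries breaks down because $b_i(q)^*b_i(q)=1+q\,b_i(q)b_i(q)^*$ ceases to be invertible for $q<0$ once $|q|\ge\tfrac12$ --- precisely the phenomenon that confined earlier work to $|q|<\sqrt2-1$. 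Overcoming it requires quantitative control of the degeneration of the smallest eigenvalue of $P_m(q)$ as $m\to\infty$, exploiting the Bozejko--Speicher recursion $P_{m+1}(q)=(1\otimes P_m(q))\,R_{m+1}(q)$ to realise $P_\Omega$ as a norm-convergent series of Wick-ordered monomials valid on the whole range $|q|<1$. Granting the inclusion, the quotient is a unital Kirchberg algebra with the $K$-theory of $\mathcal{O}_n$ (forced by continuity in $q$ from the known case $q=0$), hence isomorphic to $\mathcal{O}_n$ by the Kirchberg--Phillips classification.

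With every $\ghani_q$ realised as an essential unital extension $0\to\mathbb{K}\to\ghani_q\to\mathcal{O}_n\to 0$, the final step is a rigidity argument. Because $\mathcal{O}_n$ is nuclear, simple and purely infinite, the Cuntz--Toeplitz-type extension is absorbing (purely large), and such extensions are classified up to isomorphism by their class in $\mathrm{Ext}(\mathcal{O}_n)\cong KK^1(\mathcal{O}_n,\mathbb{C})\cong\mathbb{Z}/(n-1)\mathbb{Z}$. The continuity of $q\mapsto b_i(q)$ makes the associated Busby invariants $\tau_q\colon\mathcal{O}_n\to\mathcal{Q}(\mathcal{F})$ into a pointwise-norm-continuous, hence homotopic, family, so by homotopy invariance of $KK^1$ their classes are independent of $q$. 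As $(-1,1)$ is connected and $\mathrm{Ext}(\mathcal{O}_n)$ is discrete, $[\tau_q]=[\tau_0]$ for all $q$, whence the extensions --- and therefore the algebras $\ghani_q$ --- are all mutually isomorphic; at $q=0$ the generators are the $\ell_i$, giving $\ghani_q\simeq\ghani_0\simeq\mathbb{K}\mathcal{O}_n$. The two technical points I would still need to nail down are the genuine two-sided continuity of the field (upper semicontinuity of section norms is automatic, lower semicontinuity needs a compatible family of representations) and the uniform construction of $P_\Omega$ as $q\to\pm1$, which is where the analysis of $P_m(q)$ is indispensable.
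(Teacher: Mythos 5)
There is a genuine gap, and it sits exactly where your proposal says the least. The assertion that the quotient $\ghani_q/\mathbb{K}$ is ``a unital Kirchberg algebra with the $K$-theory of $\mathcal{O}_n$ (forced by continuity in $q$)'' is unjustified and is in fact the hardest part of the theorem. Pointwise norm-continuity of $q \mapsto b_i(q)$ gives you nothing about simplicity, pure infiniteness, or nuclearity of a \emph{fixed} fibre, and $K$-theory does not transfer between fibres of a field without (at least local) triviality of the field --- perturbation/continuity arguments of precisely this kind are what confined Jorgensen--Schmitt--Werner to $|q| < \sqrt{2}-1$ and Dykema--Nica to $|q| < 0.44$. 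You concede yourself that lower semicontinuity of the field is unproven; moreover, the homotopy argument for the Busby invariants $\tau_q : \mathcal{O}_n \to \mathcal{Q}(\mathcal{F})$ requires a continuously varying identification of each quotient with $\mathcal{O}_n$, which presupposes the Kirchberg-algebra claim. Even granting all of that, equality of classes in $\mathrm{Ext}(\mathcal{O}_n) \cong \mathbb{Z}/(n-1)\mathbb{Z}$ classifies extensions only up to the weak/stable notion; to conclude that the unital extension \emph{algebras} are isomorphic one needs the unital invariant with the distinguished classes of the units, which is why the paper invokes the Gabe--Ruiz classification via $K_{six}^u$ and separately verifies $[1_{\ghani_q}] = 1 \in K_0(\ghani_q) \simeq \mathbb{Z}$.

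You also misidentify the ``principal obstacle'': the inclusion $P_\Omega \in \ghani_q$ is not hard and needs no asymptotics of the smallest eigenvalue of $P_m(q)$. By Dykema--Nica (Lemma 4.1, quoted in the paper's Setup), $\rho_L = \sum_{i=1}^n L_i^q (L_i^q)^*$ satisfies $C_1 < \rho_L\vert_{\mathcal{F}_m^q} < C_2$ uniformly in $m \geq 1$ with $\ker \rho_L = \mathbb{C}\Omega$, so $0$ is isolated in the spectrum and $P_\Omega = E_{\rho_L}(0) \in \ghani_q$; irreducibility then yields $\mathbb{K}(\mathcal{F}^q) \subset \ghani_q$ for all $|q| < 1$. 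The actual content of the paper is the fixed-$q$ structural analysis your outline skips entirely: showing the gauge-invariant core $\oni_q^{\mathbb{T}}$ has approximately inner flip (via the tensor-reversal unitaries $J_k^q$) and is quasidiagonal, hence $\oni_q^{\mathbb{T}} \simeq \mathfrak{U}_{n^\infty}$ by Effros--Rosenberg; presenting $\oni_q$ as the Paschke/Stacey crossed product $\oni_q^{\mathbb{T}} \rtimes \mathbb{N}$, whence simple and purely infinite by Ortega--Pardo; computing $K_*(\oni_q)$ by Pimsner--Voiculescu and applying Kirchberg--Phillips to get $\oni_q \simeq \mathcal{O}_n$, before the extension step. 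As written, your proposal assumes the theorem's hardest part rather than proving it.
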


Returning to the point of view of noncommutative algebraic geometry, this result can metaphorically be considered as a topological uniformization of noncommutative quadrics. For commutative quadrics there are three families: parabolas, hyperbolas and ellipses. For the subclass of noncommutative quadrics given by $q$-CCR, we therefore have either "elliptic" Cuntz-Toeplitz algebra, "parabolic" fermionic algebra and "hyperbolic" bosonic algebra. 

\thispagestyle{firststyle}

Many authors were interested in the study of the $C^*$-algebra generated by operators of the Fock representation of $q$-CCR. Namely, K. Dykema and A. Nica in \cite{dn} proved that $\ghani_q$ $ \simeq \mathbb{K}\mathcal{O}_n$ for $|q| < 0.44$ which is slightly larger than $\sqrt{2} - 1$ - result of \cite{jsw}. Also an embedding of $\mathbb{K}\mathcal{O}_n$ into $\ghani_q$ was constructed for $|q| < 1$.

Later M. Kennedy in \cite{nk} showed existence of an embedding of $\ghani_q$ into $\mathbb{K}\mathcal{O}_n$ and proved that $\ghani_q$ is an exact $C^*$-algebra.

Let us stress out that results concerning $\ghani_q$ cannot be automatically lifted to the universal $C^*$-algebra since at the moment we do not know whether or not $\pi_F$ is a faithful $*$-representation of $C^*(q$-CCR) for any $|q| < 1$. However $\pi_F$ is a faithful representation of $q$-$CCR$, i.e. it is faithful on the $*$-algebraic level.

\section{Setup} 
Let $q \in \mathbb{R}$, $|q| < 1$ and $n \in \mathbb{N}$. In this section we will define $C^*$-algebras to be considered in the article, the Fock representation, auxiliary operators, state and prove basic structural facts about the algebras.

\begin{definition}[$q$-deformed Fock space]
Let $\mathcal{F} = \bigoplus_{k = 0}^\infty (\mathbb{C}^n)^{\otimes k}$ be a linear space. Endow it with the following inner product, see \cite{Bozejko}, \cite{dn}
\[ \langle \xi_1 \otimes \ldots \otimes \xi_k, \eta_1 \otimes \ldots \otimes \eta_k \rangle_q = \sum_{\sigma \in S_k} q^{\mathsf{inv}(\sigma)} \langle \xi_{\sigma_1}, \eta_1 \rangle \ldots \langle \xi_{\sigma_k}, \eta_k \rangle. \]
The pair $(\mathcal{F}^q, \langle \cdot, \cdot \rangle)$ is called the $q$-deformed Fock space. We denote $\mathcal{F}^q_k$ to be the $k$-th component of $\mathcal{F}^q$. Put $\Omega$ to be the unit vector in $\mathcal{F}^q_0$ and call it vacuum vector.
\end{definition}

\begin{definition}[Creation operators]
Let $e_1, \ldots, e_n$ be an orthonormal basis in $\mathbb{C}^n$. Define
\[L_i^q : \mathcal{F}^q \rightarrow \mathcal{F}^q, \ L_i^q(\xi) := e_i \otimes \xi. \]
\[R_i^q : \mathcal{F}^q \rightarrow \mathcal{F}^q, \ R_i^q(\xi) := \xi \otimes e_i. \]
These operators are called correspondingly left and right creation operators
\end{definition}

\begin{definition}[Annihilation operators]
With respect to the $q$-inner product on the $q$-deformed Fock space the adjoints to left and right creation operators have the following form
\[(L_i^q)^* : \mathcal{F}^q \rightarrow \mathcal{F}^q, (L_i^q)^*(e_{i_1} \otimes \ldots \otimes e_{i_k}) = \sum_{m = 1}^k q^{m - 1} \delta_{i i_m} e_{i_1} \otimes \ldots \otimes \widehat{e_{i_m}} \otimes \ldots \otimes e_{i_k},
\]
\[(R_i^q)^* : \mathcal{F}^q \rightarrow \mathcal{F}^q, (R_i^q)^*(e_{i_1} \otimes \ldots \otimes e_{i_k}) = \sum_{m = 1}^k q^{k - m} \delta_{i i_m} e_{i_1} \otimes \ldots \otimes \widehat{e_{i_m}} \otimes \ldots \otimes e_{i_k}, \]
where by $\widehat{\cdot}$ we mean that the tensor is missed. We call $(L_i^q)^*$, $(R_i^q)^*$ left and right annihilation operators.
\end{definition}

\begin{definition}[Fock representation]
Define $\pi_F^L$ and $\pi_F^R$ to be the left and right Fock representations of the $*$-algebra of $q$-CCR and define them to be
\[ \pi_F^L(a_i) = L_i^q, \ \pi_F^R(a_i) = R_i^q, \ i = 1,\ldots, n. \]
\end{definition}

\begin{definition}[$C^*$-algebra of the $q$-CCR in the Fock representation]
We call $\ghani^L_q$, $\ghani^R_q$ to be the image of the universal enveloping $C^*$-algebra of $q$-CCR in $\pi_F^L$, $\pi_F^R$ respectively. In other words,
\[ \ghani^L_q = C^*(L_1^q, \ldots, L_n^q), \]
\[ \ghani^R_q = C^*(R_1^q, \ldots, R_n^q). \]
We will also use a compound version
\[ \ghani^{L,R}_q = C^*(L_1^q, \ldots, L_n^q, R_1^q, \ldots, R_n^q). \]
\end{definition}

\begin{definition}[Tensor-reverse operator]
The following operator on $\mathcal{F}^q$ we call tensor-reverse operator
\[J^q : \mathcal{F}^q \rightarrow \mathcal{F}^q, \ J^q(\xi_1 \otimes \ldots \otimes \xi_k) := \xi_k \otimes \ldots \otimes \xi_1. \]
It can be seen that $J^q$ is unitary, see \cite{nk}.
\end{definition}

It is easy to check that $C^*$-algebras $\ghani^L_q$ and $\ghani^R_q$ are $C^*$-isomorphic with isomorphism given by $\mathsf{Ad}(J^q)$. Indeed, $J^q L_i^q J^q = R_i^q$. Thus when there is no need to distinguish $\ghani^L_q$ and $\ghani^R_q$ we will simply write $\ghani_q$. Another consequence of the fact that $J^q L_i^q J^q = R_i^q$ is that $\mathsf{Ad}(J^q)$ is an automorphism of $\ghani_q^{L,R}$.

\begin{definition}[Particle number operator]
We define operators called particle number operator
\[ \rho_L = \sum_{i = 1}^n L_i^q (L_i^q)^*, \ \rho_R = \sum_{i = 1}^n R_i^q (R_i^q)^*. \]
Their action on tensors are given by
\[ \rho_L(\e_{i_1} \otimes \ldots \otimes \e_{i_m}) = \sum_{k = 1}^n q^{k-1} \e_{i_1} \otimes \ldots \widehat{e_{i_k}} \ldots \otimes e_{i_m}. \]
\[ \rho_R(\e_{i_1} \otimes \ldots \otimes \e_{i_m}) = \sum_{k = 1}^n q^{m-k} \e_{i_1} \otimes \ldots \widehat{e_{i_k}} \ldots \otimes e_{i_m}. \]
\end{definition}

\begin{definition}[Gauge action on $\ghani_q^L$, $\ghani_q^R$]
Let $z \in \mathbb{T}$. Consider operators $(L_i^q)' = z L_i^q$. Then $(L_1^q)', \ldots, (L_n^q)'$ satisfy $q$-commutation relations and $((L_i^q)')^*(\Omega) = 0$, so by the uniqueness of the Fock representation, there exists a unitary $U_z$ which intertwines $(L_i^q)'$ and $L_i^q$. The same unitary intertwines $R_i^q$ and $z R_i^q$. Thus we can define the following action $\gamma$ of the torus $\mathbb{T}$ on $\ghani_q^{L, R}$:
\[ \gamma_{z}(x) = U_z x U_z^*, \]
which acts on generators by
\[ \gamma_z(L_i^q) = z \cdot L_i^q, \ \gamma_{z}(R_i^q) = z \cdot R_i^q. \]
The action $\gamma$ induces actions of $\mathbb{T}$ on $\ghani_L^q$ and $\ghani_R^q$ by
\[ z \curvearrowright L_i^q = \gamma_{z} (L_i^q), \ z \curvearrowright R_i^q = \gamma_{z} (R_i^q). \]
\end{definition}

\begin{definition}[Fixed point $C^*$-subalgebra]
We denote
\[ \ghani_q^\mathbb{T} = \{ x \in \ghani_q : z \curvearrowright x = x, \ z \in \mathbb{T} \}, \]
\[ (\ghani_q^{L, R})^{\mathbb{T}} = \{ x \in \ghani_q^{L, R} : z \curvearrowright x = x, \ z \in \mathbb{T} \}. \]
\end{definition}

\begin{proposition}[Orthogonal projections onto the vacuum vector]
The orthogonal projection $P_{\Omega}$ onto $\mathcal{F}^q_0$ belong both to $\ghani_q^L$ and $\ghani_q^R$.
\end{proposition}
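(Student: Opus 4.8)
The plan is to realize $P_{\Omega}$ as a continuous function of the single positive operator $\rho_L=\sum_{i=1}^n L_i^q (L_i^q)^*\in\ghani_q^L$, exploiting that $0$ is an \emph{isolated} point of its spectrum. Since $\gamma_z(L_i^q(L_i^q)^*)=z\bar z\,L_i^q(L_i^q)^*=L_i^q(L_i^q)^*$, the operator $\rho_L$ is gauge invariant, hence commutes with the number grading and preserves each finite-dimensional component $\mathcal{F}^q_m=(\mathbb{C}^n)^{\otimes m}$. Consequently $\mathrm{spec}(\rho_L)$ is the closure of $\{0\}\cup\bigcup_{m\ge 1}\mathrm{spec}(\rho_L|_{\mathcal{F}^q_m})$, and $P_{\Omega}$ is precisely the spectral projection $\chi_{\{0\}}(\rho_L)$ onto $\ker\rho_L$. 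The whole argument thus reduces to two statements: that $\ker\rho_L=\mathbb{C}\Omega$, and that $0$ is isolated in $\mathrm{spec}(\rho_L)$.

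First I would pin down the kernel by a finite-dimensional factorization. Let $\Psi_m:\mathbb{C}^n\otimes\mathcal{F}^q_{m-1}\to\mathcal{F}^q_m$ be the linear map $e_i\otimes\eta\mapsto L_i^q\eta=e_i\otimes\eta$; with respect to the $q$-inner products its adjoint is $\Psi_m^*\xi=\sum_i e_i\otimes (L_i^q)^*\xi$, and then $\Psi_m\Psi_m^*\xi=\sum_i L_i^q(L_i^q)^*\xi=\rho_L\xi$, so $\rho_L|_{\mathcal{F}^q_m}=\Psi_m\Psi_m^*$. Both spaces have dimension $n^m$ and the vectors $e_i\otimes\eta$ span $(\mathbb{C}^n)^{\otimes m}$, so $\Psi_m$ is a linear isomorphism and $\rho_L|_{\mathcal{F}^q_m}$ is invertible for every $m\ge 1$. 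Combined with $\rho_L\Omega=0$ this gives $\ker\rho_L=\mathbb{C}\Omega=\mathcal{F}^q_0$, so that $\chi_{\{0\}}(\rho_L)=P_{\Omega}$ as operators.

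The hard part is the \emph{uniform} spectral gap: I must show $c_q:=\inf_{m\ge 1}\min\mathrm{spec}(\rho_L|_{\mathcal{F}^q_m})>0$, so that $\mathrm{spec}(\rho_L)\subseteq\{0\}\cup[c_q,\,n/(1-|q|)]$ and $0$ is genuinely isolated. The invertibility from the previous step only gives $\min\mathrm{spec}(\rho_L|_{\mathcal{F}^q_m})>0$ for each fixed $m$, which is insufficient; and the naive route, writing $\Psi_m^*\Psi_m=I+qK_m$, fails because $\|K_m\|$ is not controlled by a constant below $1/|q|$ uniformly in $m$. Instead I would identify $\rho_L|_{\mathcal{F}^q_m}$ with the Bożejko–Speicher symmetrization operator $R_m$ via the recursion $P_m=(\mathrm{id}\otimes P_{m-1})R_m$ for the Gram operator $P_m$ of $\langle\cdot,\cdot\rangle_q$ on $(\mathbb{C}^n)^{\otimes m}$, and invoke their positivity estimates, which provide an $m$-independent lower bound $R_m\ge c_q\,\mathrm{id}>0$ (one may take $c_q=\prod_{k=1}^{\infty}(1-|q|^k)$). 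For $n=1$ this is transparent: $\rho_L|_{\mathcal{F}^q_m}$ acts by the scalar $[m]_q=(1-q^m)/(1-q)$, whose infimum over $m\ge 1$ equals $\min(1,(1+|q|)^{-1})>0$, illustrating that a uniform gap, rather than soft finite-dimensional linear algebra, is the correct phenomenon. This is the single step where a genuine analytic property of the $q$-deformation enters, and I expect it to be the main obstacle.

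Granting the gap, I would choose any continuous $f:[0,\infty)\to[0,1]$ with $f(0)=1$ and $f\equiv 0$ on $[c_q,\infty)$; then $f(\rho_L)\in C^*(\rho_L)\subseteq\ghani_q^L$ and, by the spectral description above, $f(\rho_L)=\chi_{\{0\}}(\rho_L)=P_{\Omega}$, proving $P_{\Omega}\in\ghani_q^L$. For the right algebra I would transport this by the tensor-reverse unitary: since $J^q L_i^q J^q=R_i^q$ one has $\mathsf{Ad}(J^q)(\rho_L)=\rho_R$ and $\mathsf{Ad}(J^q):\ghani_q^L\to\ghani_q^R$ is a $*$-isomorphism, while $J^q\Omega=\Omega$ gives $\mathsf{Ad}(J^q)(P_{\Omega})=P_{\Omega}$; hence $P_{\Omega}=f(\rho_R)\in\ghani_q^R$ as well. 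Equivalently, the entire argument runs verbatim for $\rho_R=\sum_i R_i^q(R_i^q)^*$ using the right-annihilation formula.
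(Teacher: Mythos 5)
Your proposal is correct and takes essentially the same route as the paper: realize $P_{\Omega}$ as the spectral projection of $\rho_L$ (respectively $\rho_R$) at the isolated spectral point $0$, and conclude by functional calculus inside $\ghani_q^L$ (respectively $\ghani_q^R$). The paper obtains the two facts you work out --- that $\ker\rho_L=\mathbb{C}\Omega$ and the $m$-independent bounds $C_1< \rho_L|_{\mathcal{F}_m^q}<C_2$ --- by citing Lemma 4.1 of Dykema--Nica, so your kernel argument and your appeal to the Bo\.{z}ejko--Speicher positivity estimates supply exactly the input the paper outsources.
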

\begin{proof}
By Lemma 4.1 of \cite{dn}, $\ker \rho_L = \ker \rho_R =\mathcal F_0^q=\left<\Omega\right>$ and there exist $C_1,\, C_2>0$ independent of $m > 0$, such that
\begin{equation}\label{rho_estimate1}
C_1 1_{\ghani_q^L} < (\rho_L)_{|\mathcal F_m^q} < C_2 1_{\ghani_q^L},\quad m\in\mathbb N
\end{equation}
\begin{equation}\label{rho_estimate2}
C_1 1_{\ghani_q^R} < (\rho_R)_{|\mathcal F_m^q} < C_2 1_{\ghani_q^R},\quad m\in\mathbb N
\end{equation}
In particular (\ref{rho_estimate1},\ref{rho_estimate2}) implies that $0$ is an isolated point in the spectrum of $\rho_L$ and $\rho_R$, hence the spectral projection 
$E_{\rho_L}(0)=E_{\rho_R}(0)=P_{\mathcal F_0^q}$ is contained in $\ghani_q^L$ and $\ghani_q^R$.

\end{proof}

\begin{proposition}[Invariance of the ideal of compact operators]\label{Invariance}
The ideal of compact operators $\mathbb{K}(\mathcal{F}^q)$ is contained both in $\ghani_q^L$ and $\ghani_q^R$. Moreover, it is invariant under the action of $\mathbb{T}$.
\end{proposition}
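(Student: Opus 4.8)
The plan is to build directly on the preceding proposition, which places the rank-one vacuum projection $P_\Omega$ in both $\ghani_q^L$ and $\ghani_q^R$, and then to manufacture from it enough rank-one operators to exhaust all compacts. I would first record that $P_\Omega = |\Omega\rangle\langle\Omega|$ is a rank-one projection, and that for a multi-index $\alpha = (i_1,\ldots,i_k)$ the product $L_\alpha^q := L_{i_1}^q \cdots L_{i_k}^q$ satisfies $L_\alpha^q \Omega = e_{i_1}\otimes\cdots\otimes e_{i_k} =: e_\alpha$. Since $\ghani_q^L$ is a $C^*$-algebra containing all the $L_i^q$ and (by the previous proposition) $P_\Omega$, it contains every operator $L_\alpha^q P_\Omega (L_\beta^q)^*$, and a short computation using $P_\Omega\xi = \langle\Omega,\xi\rangle_q\,\Omega$ together with $\langle \Omega, (L_\beta^q)^*\xi\rangle_q = \langle L_\beta^q\Omega, \xi\rangle_q$ gives the identity $L_\alpha^q P_\Omega (L_\beta^q)^* = |e_\alpha\rangle\langle e_\beta|$.

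Next I would observe that the simple tensors $\{e_\alpha\}$, as $\alpha$ ranges over all multi-indices (the empty index giving $\Omega$ itself), span the algebraic direct sum $\bigoplus_k (\mathbb{C}^n)^{\otimes k}$, which is dense in $\mathcal{F}^q$. Hence the norm-closed linear span of $\{|e_\alpha\rangle\langle e_\beta|\}$ contains every rank-one operator $|\xi\rangle\langle\eta|$ (approximate $\xi$ and $\eta$ by vectors in the dense subspace and pass to the norm limit), therefore every finite-rank operator, and therefore all of $\mathbb{K}(\mathcal{F}^q)$ by the standard norm-density of the finite-rank operators in the compacts. This yields $\mathbb{K}(\mathcal{F}^q) \subseteq \ghani_q^L$, and the verbatim argument with $R_i^q$ in place of $L_i^q$ gives $\mathbb{K}(\mathcal{F}^q) \subseteq \ghani_q^R$. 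The ideal property is then automatic: $\mathbb{K}(\mathcal{F}^q)$ is already a two-sided ideal of the ambient $B(\mathcal{F}^q)$, so it is in particular an ideal of each subalgebra that contains it.

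For the $\mathbb{T}$-invariance I would simply recall that the gauge action is implemented by conjugation, $\gamma_z = \mathsf{Ad}(U_z)$ with $U_z$ unitary on $\mathcal{F}^q$. Conjugating a compact operator by a unitary produces a compact operator, so $\gamma_z(\mathbb{K}(\mathcal{F}^q)) = \mathbb{K}(\mathcal{F}^q)$ for every $z \in \mathbb{T}$, which is exactly the claimed invariance; no estimate is needed here.

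I do not expect a genuine obstacle. The essential input, namely that $P_\Omega$ lies in the algebra, is precisely the content of the preceding proposition, and once that is granted the remainder is the classical route showing that a $C^*$-algebra acting on a space with cyclic vacuum and containing the vacuum projection must contain the compacts. The only points demanding (routine) care are verifying the identity $L_\alpha^q P_\Omega (L_\beta^q)^* = |e_\alpha\rangle\langle e_\beta|$ and checking the totality of the flat tensors $\{e_\alpha\}$ in $\mathcal{F}^q$.
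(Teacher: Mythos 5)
Your proof is correct, but it takes a genuinely different route from the paper's. The paper argues abstractly: having placed the compact projection $P_\Omega$ in $\ghani_q^L$ and $\ghani_q^R$, it invokes irreducibility of these algebras on $\mathbb{B}(\mathcal{F}^q)$ together with the standard fact (Corollary I.10.4 of Davidson's book) that an irreducible $C^*$-subalgebra of $\mathbb{B}(\mathcal{H})$ containing one nonzero compact operator contains all of $\mathbb{K}(\mathcal{H})$; the $\mathbb{T}$-invariance argument is then the same as yours. You instead construct the compacts by hand, from the identity
\[ L_\alpha^q P_\Omega (L_\beta^q)^* = \left| e_\alpha \right\rangle \left\langle e_\beta \right|, \]
together with totality of the flat tensors $e_\alpha$ in $\mathcal{F}^q$ (note these are not orthonormal for the $q$-inner product when $q \neq 0$, but your argument only uses density of their span, so this is harmless). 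What your approach buys is self-containedness: the paper's route silently relies on irreducibility of $\ghani_q^L$ and $\ghani_q^R$, which is asserted rather than proved there, and whose natural proof is essentially the cyclicity computation you carry out; moreover, your construction yields irreducibility as a byproduct, since any subalgebra containing $\mathbb{K}(\mathcal{F}^q)$ acts irreducibly. What the paper's route buys is brevity and independence from the explicit form of the generators, at the price of an external citation and the unproved irreducibility claim. Both proofs handle the invariance statement identically, via the unitary implementation $\gamma_z = \mathsf{Ad}(U_z)$.
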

\begin{proof}
The orthogonal projection $P_\Sigma$ is a compact operator and belongs to both $\ghani_q^L$ and $\ghani_q^R$. Both $\ghani_q^L$ and $\ghani_q^R$ are irreducible $C^*$-subalgebras of $\mathbb{B}(\mathcal{F}^q)$, so by Corollary I.10.4 of \cite{dav}, the whole ideal of compact operators is contained in $\ghani_q^L$, $\ghani_q^R$. 

Since the action of $\mathbb{T}$ is implemented by conjugation with a unitary, $\mathbb{K}(\mathcal{F}^q)$ is invariant being an ideal in both $\oni_q^L$ and $\oni_q^R$.
\end{proof}

\begin{definition}[Quotient of $q$-CCR]
We denote $\oni_q^L = \ghani_q^L / \mathbb{K}(\mathcal{F}^q)$, $\oni_q^R = \ghani_q^R / \mathbb{K}(\mathcal{F}^q)$, $\oni_q^{L,R} = \ghani_q^{L,R} / \mathbb{K}(\mathcal{F}^q)$. When there is no need to distinguish $\oni_q^L$ and $\oni_q^R$ we will simply write $\oni_q$.
\end{definition}

\begin{definition}[Gauge action on $\oni_q^L$, $\oni_q^R$]
Since the ideal of compact operators is $\mathbb{T}$-invariant, the action of $\mathbb{T}$ descends to $\oni_q^L$ and $\oni_q^R$.
\end{definition}

\begin{proposition}[Theorem 4.3 of \cite{dn}]
There is a $\mathbb{T}$-equivariant inclusion $\mathbb{K}\mathcal{O}_n \subset \ghani_q^L$ and $\mathbb{K}\mathcal{O}_n \subset \ghani_q^R$. Moreover, these inclusions are implemented by conjugation with the same unitary $U : \mathcal{F}^0 \rightarrow \mathcal{F}^q$.
\end{proposition}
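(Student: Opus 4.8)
The plan is to construct, directly inside $\ghani_q^L$, an $n$-tuple of isometries with pairwise orthogonal ranges and nonzero defect, and then to invoke the universal property of $\mathbb{K}\mathcal{O}_n$. Write $\mathcal{F}^q_{\geq 1} = \mathcal{F}^q \ominus \mathbb{C}\Omega$ and let $\Phi : \mathbb{C}^n \otimes \mathcal{F}^q \to \mathcal{F}^q_{\geq 1}$ be the bounded linear bijection $\Phi(e_i \otimes \xi) = L_i^q \xi$, where $\mathbb{C}^n \otimes \mathcal{F}^q$ carries the inner product $\langle e_i \otimes \xi, e_j \otimes \eta\rangle = \delta_{ij}\langle \xi,\eta\rangle_q$. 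A direct computation gives $\Phi^* \zeta = \sum_j e_j \otimes (L_j^q)^* \zeta$, so the positive operator $G := \Phi^*\Phi$ on $\mathbb{C}^n \otimes \mathcal{F}^q$ is the operator matrix $G = [(L_j^q)^* L_i^q]_{j,i}$, i.e. a positive element of $M_n(\ghani_q^L)$; note that $\ghani_q^L$ is unital, since $1 = (L_i^q)^* L_i^q - q L_i^q (L_i^q)^* \in \ghani_q^L$.

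First I would show that $G$ is invertible. This is equivalent to $\Phi$ being bounded below, i.e. to a uniform estimate $\| \sum_i e_i \otimes \xi_i \|_q^2 \geq c \sum_i \| \xi_i \|_q^2$ with $c > 0$ independent of the grade. This is exactly the Bozejko--Speicher/Zagier positivity-with-gap for the Gram operators of the $q$-inner product (the content underlying Lemma 4.1 of \cite{dn} and the estimates \eqref{rho_estimate1}, \eqref{rho_estimate2} above), and it is the one place where $|q| < 1$ is essential, the bound degenerating as $|q| \to 1$. I expect this to be the main obstacle, the genuine analytic input; everything else is soft. Granting it, $G$ is a positive invertible element of the unital $C^*$-algebra $M_n(\ghani_q^L)$, so $G^{-1/2} = [g_{ji}]_{j,i}$ exists there by continuous functional calculus, with entries $g_{ji} \in \ghani_q^L$.

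Next, set $s_i := \sum_{j=1}^n L_j^q\, g_{ji} \in \ghani_q^L$. These are precisely the components of the unitary part $V = \Phi G^{-1/2}$ of the polar decomposition of $\Phi$, i.e. $s_i \xi = V(e_i \otimes \xi)$ with $V : \mathbb{C}^n \otimes \mathcal{F}^q \to \mathcal{F}^q_{\geq 1}$ a Hilbert-space isomorphism. Hence the $s_i$ satisfy the Cuntz--Toeplitz relations $s_i^* s_j = \delta_{ij} 1$ and $\sum_i s_i s_i^* = 1 - P_\Omega$, with nonzero defect $P_\Omega$. By the universal property of $\mathbb{K}\mathcal{O}_n$ there is a $*$-homomorphism $\mathbb{K}\mathcal{O}_n \to \ghani_q^L$ sending the free generators $c_i$ to $s_i$; its kernel is an ideal that does not contain $\mathbb{K}$ (the generator $1 - \sum_i c_i c_i^*$ maps to $P_\Omega \neq 0$), and since $0$ and $\mathbb{K}$ are the only proper ideals of $\mathbb{K}\mathcal{O}_n$ the map is injective. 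Gauge-equivariance is then immediate: each entry of $G$ is $\mathbb{T}$-invariant, so the $g_{ji}$ are fixed by $\mathbb{T}$ and $\gamma_z(s_i) = z s_i$, matching $\gamma_z(c_i) = z c_i$.

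Finally, to realize the embedding as conjugation by a single unitary $U : \mathcal{F}^0 \to \mathcal{F}^q$, I would define $U$ on the orthonormal basis $\{ c_{i_1} \cdots c_{i_k} \Omega \}$ of $\mathcal{F}^0$ by $U(c_{i_1} \cdots c_{i_k} \Omega) = s_{i_1} \cdots s_{i_k} \Omega$. The vectors $s_{i_1} \cdots s_{i_k}\Omega$ are again orthonormal (compute inner products using $s_i^* s_j = \delta_{ij} 1$) and total in $\mathcal{F}^q$ (since $1 - \sum_i s_i s_i^* = P_\Omega$ makes $\Omega$ cyclic for $\{ s_i \}$), so $U$ is a well-defined unitary with $U c_i U^* = s_i$, and $\mathrm{Ad}(U)$ implements $\mathbb{K}\mathcal{O}_n \subset \ghani_q^L$. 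The right-hand inclusion follows verbatim with $R_i^q$ in place of $L_i^q$, yielding isometries $s_i^R \in \ghani_q^R$. For the assertion that the \emph{same} $U$ serves both sides, I would instead take $U$ to be the canonical block-diagonal unitary built from the Gram operators of the $q$-inner product and use that this inner product is invariant under the reversal $J^q$; the reversal symmetry makes the left and right polar decompositions compatible, so that this single $U$ transports the left and right free creation operators to $s_i^L$ and $s_i^R$ simultaneously. Verifying this compatibility is the only delicate point in the ``moreover'' clause.
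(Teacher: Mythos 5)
Your construction of the two one\hyphen sided inclusions is correct, and it is a genuinely different route from the paper's: the paper does not prove this proposition at all, it imports it from Theorem 4.3 of \cite{dn} and only adds the remark on $\mathbb{T}$-equivariance, whereas Dykema--Nica's own proof runs through norm estimates for a concrete unitary. Granting the spectral gap \eqref{rho_estimate1}--\eqref{rho_estimate2} (which the paper also quotes from Lemma 4.1 of \cite{dn}), your argument is sound: $\Phi\Phi^* = \rho_L\vert_{\mathcal{F}^q\ominus\mathbb{C}\Omega}$, so $G=\Phi^*\Phi$ is invertible; moreover, by uniqueness of the polar decomposition, $V=\Phi G^{-1/2}=(\Phi\Phi^*)^{-1/2}\Phi$, so your isometries are $s_i=(\rho_L^+)^{1/2}L_i^q$ --- exactly the operators ``$Ra_i$'' the paper alludes to after this proposition and reuses in Theorem \ref{Paschke}. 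The Cuntz--Toeplitz relations, the injectivity argument via the ideal structure of $\mathbb{K}\mathcal{O}_n$, the $\mathbb{T}$-equivariance, and the unitary $U$ with $UL_i^0U^*=s_i$ (where $L_i^0$ are the free creation operators on $\mathcal{F}^0$) all check out.

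The genuine gap is the ``moreover'' clause, and it is larger than your closing sentence concedes. Write $P_k$ for the Gram operator of $\langle\cdot,\cdot\rangle_q$ on $(\mathbb{C}^n)^{\otimes k}$. Your $U$, determined by $U(L^0_{i_1}\cdots L^0_{i_k}\Omega)=s_{i_1}\cdots s_{i_k}\Omega$, really does not serve the right side: the iterated polar orthonormalization privileges the leftmost tensor slot, and one computes $s_is_js_k\Omega=P_3^{-1/2}Z(e_i\otimes e_j\otimes e_k)$ with $Z$ the unitary polar part of $P_3^{1/2}(1\otimes P_2)^{-1/2}$; compatibility with the right-hand construction would force $Z$ to commute with the tensor flip $J_3$, but a second-order expansion gives $J_3ZJ_3-Z=\tfrac{q^2}{4}(T_1T_2-T_2T_1)+O(q^3)\neq 0$ ($T_1,T_2$ the adjacent transpositions), so $J^qUJ^0\neq U$ and the right inclusion you actually constructed is implemented by the \emph{different} unitary $J^qUJ^0$. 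So switching to the canonical unitary $U_G=\bigoplus_kP_k^{-1/2}$ is indeed forced, and $U_G$ does intertwine the reversals (since $[P_k,J_k]=0$). But the switch opens a hole your argument cannot close: one must now show $U_GL_i^0U_G^*\in\ghani_q^L$, and this is not implied by $s_i\in\ghani_q^L$, because $U_GL_i^0U_G^*\neq s_i$ --- on grade $k$ the former is $P_{k+1}^{-1/2}L_i^qP_k^{1/2}$, and equality with $(\rho_L^+)^{1/2}L_i^q$ would force $(\rho_L^+)^{1/2}=P_{k+1}^{-1/2}(1\otimes P_k)^{1/2}$, whose right-hand side fails to be $\langle\cdot,\cdot\rangle_q$-self-adjoint precisely because $P_{k+1}$ and $1\otimes P_k$ do not commute. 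Establishing that membership is the computational heart of Theorem 4.3 of \cite{dn}, and nothing in your proposal substitutes for it. In sum: you have proved both inclusions, each $\mathbb{T}$-equivariant and each implemented by a unitary, but not that a single unitary implements both --- which is the clause the proposition asserts, and the kind of compatibility the paper genuinely uses later (its analogue for the Kennedy--Nica inclusion is what makes Lemma \ref{AlgInj} work).
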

\begin{proposition}[Theorem of \cite{nk}]
There is an inclusion $\ghani_q^L \subset \mathbb{K}\mathcal{O}_n$ and $\ghani_q^R \subset \mathbb{K}\mathcal{O}_n$. Moreover, these inclusions are implemented by conjugation with the same unitary $U^{opp} : \mathcal{F}^q \rightarrow \mathcal{F}^0$.
\end{proposition}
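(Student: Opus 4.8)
The plan is to build $U^{opp}$ as the square root of the operator intertwining the two inner products, and then to show that conjugation by it carries the $q$-creation operators into the Cuntz--Toeplitz algebra. The starting observation is that $\mathcal{F}^q$ and $\mathcal{F}^0$ have the \emph{same} underlying algebraic space $\bigoplus_{k\ge 0}(\mathbb{C}^n)^{\otimes k}$ and that $L_i^q\colon\xi\mapsto e_i\otimes\xi$ is literally the same linear map as $L_i^0$; only the inner product depends on $q$. Writing $U_\sigma$ for the unitary of $\mathcal{F}^0$ that permutes the first $k$ tensor legs by $\sigma\in S_k$, set $P=\bigoplus_{k\ge 0}P_k$ with $P_k=\sum_{\sigma\in S_k}q^{\mathsf{inv}(\sigma)}U_\sigma$, so that $\langle u,v\rangle_q=\langle Pu,v\rangle_0$. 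By the positivity theorem of Bozejko--Speicher and Zagier (\cite{Bozejko}, \cite{Zagier_positivity}) together with uniform two-sided bounds on the $P_k$ (in the same spirit as (\ref{rho_estimate1})--(\ref{rho_estimate2})), $P$ is bounded, positive and boundedly invertible, with spectrum bounded away from $0$. Hence $P^{1/2}$ is bounded and invertible, and the identity $\langle P^{1/2}u,P^{1/2}v\rangle_0=\langle u,v\rangle_q$ shows that $P^{1/2}$ extends to a unitary $U^{opp}\colon\mathcal{F}^q\to\mathcal{F}^0$.

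Because $L_i^q$ and $L_i^0$ coincide as linear maps, $U^{opp}L_i^q(U^{opp})^*=P^{1/2}L_i^0P^{-1/2}$, so the whole statement reduces to proving
\[ P^{1/2}L_i^0P^{-1/2}\in\mathbb{K}\mathcal{O}_n=C^*(L_1^0,\dots,L_n^0),\qquad i=1,\dots,n. \]
Three facts about the Cuntz--Toeplitz algebra feed into this. First, each $U_\sigma$ is a finite $*$-monomial in the $L_j^0$ (annihilate the first $k$ legs with a word in the $(L_j^0)^*$ and recreate them in the order dictated by $\sigma$), so all the $P_k$ live in the $*$-algebra generated by the $L_j^0$. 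Second, the shift $*$-endomorphism $\Lambda(x)=\sum_{i}L_i^0\,x\,(L_i^0)^*$ maps $\mathbb{K}\mathcal{O}_n$ into itself and implements $x\mapsto 1\otimes x$. Third, the Bozejko--Speicher recursion $P_{k}=(1\otimes P_{k-1})R_{k}$ globalises to the bounded-operator identity $P=\Lambda(P)R$, where $R=\sum_{j\ge 0}q^{j}C_j$ and $C_j$ is the norm-one word in the generators that braids the first leg past the next $j$ legs; since $|q|<1$ this series converges in norm, so $R\in\mathbb{K}\mathcal{O}_n$.

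The step I expect to be the main obstacle is to pass from these level-by-level identities to an honest \emph{norm} statement. The fixed-point iteration $P^{(m+1)}=\Lambda(P^{(m)})R$ reproduces $P$ only in the strong operator topology, which does not by itself place $P$ (let alone $P^{1/2}$) in the $C^*$-algebra. The mechanism that should save the day is stabilisation: one shows that the block $P_{k+1}^{1/2}L_i^0P_k^{-1/2}$ (the restriction of $P^{1/2}L_i^0P^{-1/2}$ mapping level $k$ to level $k+1$) becomes independent of $k$ up to an error decaying at the rate $|q|^{k}$, a decay that is available precisely because $\prod_{m\ge 1}(1-|q|^m)>0$ keeps $P^{-1/2}$ uniformly bounded. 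Carrying this estimate out shows that $P^{1/2}L_i^0P^{-1/2}$ differs from a norm-convergent series of words in the $L_j^0$ by a \emph{compact} operator; invoking the already-established inclusion $\mathbb{K}(\mathcal{F}^0)\subset\mathbb{K}\mathcal{O}_n$, the whole operator lies in $\mathbb{K}\mathcal{O}_n$. Since $\mathsf{Ad}(U^{opp})$ is a $*$-isomorphism onto its image, this gives $\ghani_q^L\subset\mathbb{K}\mathcal{O}_n$.

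Finally, the statement for $\ghani_q^R$ is obtained from the left case by symmetry. Since $P$ commutes with the tensor-reversal $J$ (the inversion count $\mathsf{inv}$ is preserved under conjugating a permutation by the order-reversing involution), the unitary $U^{opp}=P^{1/2}$ intertwines $J^q$ with $J^0$; combined with $J^q L_i^q J^q=R_i^q$ this yields $U^{opp}R_i^q(U^{opp})^*=\mathsf{Ad}(J^0)\bigl(P^{1/2}L_i^0P^{-1/2}\bigr)$, which lies in $\mathsf{Ad}(J^0)(\mathbb{K}\mathcal{O}_n)=C^*(R_1^0,\dots,R_n^0)$, the right copy of the Cuntz--Toeplitz algebra. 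Hence the same unitary $U^{opp}$ implements $\ghani_q^R\subset\mathbb{K}\mathcal{O}_n$ as well.
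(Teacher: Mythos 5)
First, a point of comparison: the paper does not actually prove this proposition --- it is imported as ``Theorem of \cite{nk}'', with only the surrounding remark that it is a reformulation of Kennedy--Nica's theorem. So your proposal is a reconstruction of \cite{nk} rather than a parallel to any argument in the paper. Your framing is indeed the one used there: $U^{opp} = P^{1/2}$ for the Bo\.{z}ejko--Speicher operator $P$ (positivity together with the uniform two-sided bounds of \cite{Bozejko}, \cite{Zagier_positivity} make $P^{1/2}$ a unitary $\mathcal{F}^q \to \mathcal{F}^0$), the identification of $L_i^q$ and $L_i^0$ as linear maps correctly reduces everything to $P^{1/2}L_i^0P^{-1/2} \in \mathbb{K}\mathcal{O}_n$, and your derivation of the right-handed inclusion from $[P, J] = 0$ (conjugation by the order-reversing permutation preserves $\mathsf{inv}$) is correct and is the cleanest part of the write-up.

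However, the step you yourself flag as the main obstacle contains a genuine gap, and it is not merely an omitted computation: the criterion you propose is insufficient in principle. You argue that once the blocks of $P^{1/2}L_i^0P^{-1/2}$ stabilize up to errors $O(|q|^k)$, the operator is a compact perturbation of an element of $\mathbb{K}\mathcal{O}_n$. But block stabilization --- even exact shift-invariance $B_{k+1} = 1 \otimes B_k$ --- does not imply membership in $\mathbb{K}\mathcal{O}_n$. Concretely, let $X$ be the projection onto tensors in $\mathcal{F}^0$ ending in $e_{\mu_1}\otimes\cdots\otimes e_{\mu_m}$ (a product of right creation operators and their adjoints): its block on level $k \ge m$ is exactly $1^{\otimes(k-m)} \otimes p_\mu$, where $p_\mu$ is the rank-one projection onto $\mathbb{C}\,(e_{\mu_1}\otimes\cdots\otimes e_{\mu_m})$, yet $X \notin \mathbb{K}\mathcal{O}_n$. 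Indeed, otherwise its image in the Calkin algebra would be a projection distinct from $0$ and $1$ lying in $\oni_0^L \cap \oni_0^R$, and this intersection equals $\mathbb{C}1$ by the identification $\oni_0^{L,R} \simeq \oni_0^L \otimes \oni_0^R$ (used in the proof of Lemma \ref{AlgInj}) together with a slice-map argument. So what must actually be proved is that the blocks converge, at geometric rate, to the blocks of a \emph{specific, explicitly exhibited} element of $\mathbb{K}\mathcal{O}_n$ --- the ``norm-convergent series of words in the $L_j^0$'' that you invoke but never write down. Constructing that element and establishing the $O(|q|^k)$ estimates against it is precisely the technical content of \cite{nk}, and nothing in your sketch produces it. As written, the proposal establishes the construction of $U^{opp}$ and the reduction, but the membership statement --- which is the whole theorem --- is still taken on faith; you should either cite \cite{nk} for it (as the paper does) or supply those estimates.
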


These propositions are reformulations of the corresponding Theorems in \cite{dn} and \cite{nk}. $\mathbb{T}$-equivariance follows from the fact that $U$ conjugates generators $s_1, \ldots s_n$ of $\mathbb{K}\mathcal{O}_n$ into operators of the form $R a_1, \ldots, R a_n$ with $R$ being $\mathbb{T}$-equivariant and $a_1, \ldots, a_n$ generators of $\ghani_q$. 

In both cases the ideal of compact operators is mapped into the ideal of compact operators because the inclusion is given by conjugation with a unitary. Moreover, since the quotient map $\ghani_q \to \oni_q$ is $\mathbb{T}$-equivariant, we can conclude the following Propositions
\begin{corollary}
There is a $\mathbb{T}$-equivariant inclusion $\mathcal{O}_n \subset \oni_q$.
\end{corollary}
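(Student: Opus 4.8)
The plan is to obtain the inclusion $\mathcal{O}_n \subset \oni_q$ by descending the already-available $\mathbb{T}$-equivariant inclusion $\iota \colon \mathbb{K}\mathcal{O}_n \hookrightarrow \ghani_q$ to the quotients, and then to upgrade the resulting $*$-homomorphism to an injection using simplicity of $\mathcal{O}_n$. Throughout I treat the left case $\ghani_q^L$, the right case being identical, so that $\oni_q = \ghani_q^L/\mathbb{K}(\mathcal{F}^q)$.

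First I would fix the inclusion $\iota$ supplied by the Proposition (Theorem 4.3 of \cite{dn}), which is implemented by conjugation with the unitary $U \colon \mathcal{F}^0 \to \mathcal{F}^q$, i.e. $\iota = \mathsf{Ad}(U)$. Because $U$ is unitary, $\mathsf{Ad}(U)$ carries compacts onto compacts, so $\mathsf{Ad}(U)\bigl(\mathbb{K}(\mathcal{F}^0)\bigr) = \mathbb{K}(\mathcal{F}^q)$. Consequently the composite of $\iota$ with the quotient map $\ghani_q \to \oni_q$ is a $*$-homomorphism $\mathbb{K}\mathcal{O}_n \to \oni_q$ that annihilates the ideal $\mathbb{K}(\mathcal{F}^0) \subset \mathbb{K}\mathcal{O}_n$. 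By the universal property of the quotient it therefore factors through $\mathcal{O}_n = \mathbb{K}\mathcal{O}_n/\mathbb{K}(\mathcal{F}^0)$, producing $\bar\iota \colon \mathcal{O}_n \to \oni_q$.

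Next I would verify that $\bar\iota$ is $\mathbb{T}$-equivariant, which is a formal diagram chase: $\iota$ is $\mathbb{T}$-equivariant by hypothesis, while both quotient maps $\mathbb{K}\mathcal{O}_n \to \mathcal{O}_n$ and $\ghani_q \to \oni_q$ intertwine the respective gauge actions (the action on $\mathbb{K}(\mathcal{F}^q)$ being the restriction of the gauge action, which is precisely why it descends). Hence the induced map on quotients commutes with the two $\mathbb{T}$-actions. Finally I would argue injectivity. Since $\iota$ is unital ($\mathsf{Ad}(U)$ sends $1_{\mathcal{F}^0}$ to $1_{\mathcal{F}^q}$) and $\mathbb{K}(\mathcal{F}^q)$ is a proper ideal of $\ghani_q$, the map $\bar\iota$ sends $1_{\mathcal{O}_n}$ to $1_{\oni_q} \neq 0$, so $\bar\iota \neq 0$; as recalled in the introduction $\mathcal{O}_n$ is simple, so any nonzero $*$-homomorphism out of it is injective, and thus $\bar\iota$ is an inclusion.

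The only genuinely non-routine point is the reduction to a quotient map, namely checking that $\iota$ respects the two ideals of compacts so that $\bar\iota$ is well defined; this is exactly where it matters that the inclusion is implemented by a single unitary rather than by an abstract embedding. Once that is in place everything else is formal, and simplicity of $\mathcal{O}_n$ removes any need to analyze $\bar\iota$ on the quotients directly.
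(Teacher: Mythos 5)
Your proposal is correct and follows essentially the same route as the paper: the paper likewise observes that the inclusion $\mathbb{K}\mathcal{O}_n \subset \ghani_q$, being implemented by conjugation with the unitary $U$, maps $\mathbb{K}(\mathcal{F}^0)$ into $\mathbb{K}(\mathcal{F}^q)$, and that the $\mathbb{T}$-equivariant quotient map $\ghani_q \to \oni_q$ then lets the inclusion descend to a $\mathbb{T}$-equivariant map $\mathcal{O}_n \to \oni_q$. The only difference is that you make the injectivity step explicit via simplicity of $\mathcal{O}_n$ (equivalently, one can note that $\mathsf{Ad}(U)$ maps compacts \emph{onto} compacts, so the kernel of the composite is exactly $\mathbb{K}(\mathcal{F}^0)$), a point the paper leaves implicit.
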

\begin{corollary}
There is an inclusion $\oni_q \subset \mathcal{O}_n$
\end{corollary}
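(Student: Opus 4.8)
The plan is to obtain the inclusion $\oni_q \subset \mathcal{O}_n$ by passing Kennedy's embedding to the quotient by the compact operators, in exact parallel to how the preceding corollary is deduced from the $\mathbb{K}\mathcal{O}_n \subset \ghani_q$ direction. Concretely, I would take the injective $*$-homomorphism $\iota = \mathsf{Ad}(U^{opp}) \colon \ghani_q \hookrightarrow \mathbb{K}\mathcal{O}_n$ supplied by the previous Proposition (the Theorem of \cite{nk}), where $U^{opp} \colon \mathcal{F}^q \to \mathcal{F}^0$ is unitary, and show that it descends to an injective map of the quotients $\oni_q = \ghani_q/\mathbb{K}(\mathcal{F}^q)$ and $\mathcal{O}_n = \mathbb{K}\mathcal{O}_n/\mathbb{K}$.

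The first step is to check that $\iota$ carries the ideal of compacts into the ideal of compacts. Conjugation by the unitary $U^{opp}$ preserves compactness, so $\iota(\mathbb{K}(\mathcal{F}^q)) \subseteq \mathbb{K}(\mathcal{F}^0)$; since also $\iota(\mathbb{K}(\mathcal{F}^q)) \subseteq \iota(\ghani_q) \subseteq \mathbb{K}\mathcal{O}_n$, the image lands in $\mathbb{K}\mathcal{O}_n \cap \mathbb{K}(\mathcal{F}^0) = \mathbb{K}$, the unique largest ideal of $\mathbb{K}\mathcal{O}_n$ recalled in the Introduction. By the universal property of the quotient, $\iota$ therefore induces a $*$-homomorphism $\bar\iota \colon \oni_q \to \mathcal{O}_n$.

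It then remains to verify that $\bar\iota$ is injective, so that it is genuinely an inclusion. This reduces to showing $\iota^{-1}(\mathbb{K}) = \mathbb{K}(\mathcal{F}^q)$ inside $\ghani_q$. One inclusion is the content of the previous step; for the reverse, if $x \in \ghani_q$ satisfies $\iota(x) \in \mathbb{K} = \mathbb{K}(\mathcal{F}^0)$, then $x = (U^{opp})^* \iota(x) U^{opp}$ is again compact, and hence $x \in \mathbb{K}(\mathcal{F}^q)$ by Proposition \ref{Invariance}. Thus $\ker \bar\iota = \iota^{-1}(\mathbb{K})/\mathbb{K}(\mathcal{F}^q) = 0$, which completes the argument.

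I do not expect a serious obstacle here: the whole statement is the standard observation that a unitarily implemented embedding both preserves and reflects compactness, and therefore descends to an injective map of the corresponding Calkin-type quotients. The only point requiring any care is precisely this injectivity, that is, that the preimage of the compacts is no larger than the compacts themselves — which is exactly where the unitary implementation of Kennedy's inclusion, rather than an arbitrary $*$-embedding, is used.
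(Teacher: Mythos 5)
Your proposal is correct and takes essentially the same approach as the paper: the paper's (very terse) justification is exactly that the inclusion $\ghani_q \subset \mathbb{K}\mathcal{O}_n$ is implemented by conjugation with the unitary $U^{opp}$, hence maps the ideal of compact operators into the ideal of compact operators and descends to the quotients $\oni_q \to \mathcal{O}_n$. Your extra step verifying injectivity of the induced map — that $\iota^{-1}(\mathbb{K}) = \mathbb{K}(\mathcal{F}^q)$, using that unitary conjugation also reflects compactness — is left implicit in the paper, and you handle it correctly.
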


\section{Description of the approach}
\subsection{Step 1, approximation of flip: $\oni_q^{\mathbb{T}} \simeq \oni_0^{\mathbb{T}}$}
\begin{definition}
Let $A$ be a $C^*$-algebra. Consider an automorphism $\sigma$ of $A \otimes_{min} A$ given by $\sigma(a \otimes b) = b \otimes a$. $A$ has approximately inner flip if there is a sequence $u_1, u_2, \ldots$ of unitaries in $A \otimes_{min} A$ such that for each $x \in A \otimes_{min} A$
\[ \lim_{k \to \infty} \lVert u_k x u_k^* - \sigma(x) \rVert = 0. \]
\end{definition}
Let $\mathfrak{U}_{n^\infty}$ be the uniformly hyperfinite algebra of type $n^\infty$.
\begin{theorem}[\cite{RosenbergEffros}, Theorem 5.1]\label{RosenbergUHF}
Let $A$ be a unital $C^*$-algebra. Then $A \simeq \mathfrak{U}_{n^\infty}$ iff
\begin{enumerate}
    \item $A$ has approximately inner flip.
    \item $A$ has an asymptotic imbedding in $\mathfrak{U}_{n^\infty}$.
    \item $A \simeq A \otimes \mathfrak{U}_{n^\infty}$.
\end{enumerate}
\end{theorem}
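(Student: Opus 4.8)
The plan is to establish the two implications separately. The forward direction is bookkeeping, while the converse rests on an Elliott-type approximate intertwining whose rigidity is supplied by the approximately inner flip.

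For $A \simeq \mathfrak{U}_{n^\infty} \Rightarrow (1),(2),(3)$, I would realize $\mathfrak{U}_{n^\infty} = \bigotimes_{k=1}^{\infty} M_n$ (the $C^*$-completion of the algebraic infinite tensor product). Condition (3) is then the classical reindexing isomorphism $\mathfrak{U}_{n^\infty} \otimes \mathfrak{U}_{n^\infty} \simeq \mathfrak{U}_{n^\infty}$ obtained by interleaving the two infinite strings of tensor legs. Condition (2) is immediate, since the identity map is an asymptotic embedding of $\mathfrak{U}_{n^\infty}$ into itself. For condition (1), observe that the flip on a single $M_n \otimes M_n$ is implemented by the self-adjoint permutation unitary $w = \sum_{i,j} e_{ij} \otimes e_{ji}$; writing $\mathfrak{U}_{n^\infty} \otimes \mathfrak{U}_{n^\infty} \simeq \bigotimes_{k}(M_n^{(k)} \otimes M_n^{(k)})$, the unitaries $u_k = w_1 w_2 \cdots w_k$ implement the flip exactly on the first $k$ paired legs, hence approximate $\sigma$ on the dense $*$-subalgebra of finitely supported tensors.

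The substance is the converse, so assume (1)--(3). First I would invoke the structural consequences of approximately inner flip established beforehand in \cite{RosenbergEffros}: such an $A$ is simple, nuclear, and carries at most one tracial state; nuclearity is what makes the tensor-product maps below well defined on the minimal tensor product. The engine is the two-sided approximate intertwining, for which I need asymptotically mutually inverse families $\alpha_k : A \to \mathfrak{U}_{n^\infty}$ and $\beta_k : \mathfrak{U}_{n^\infty} \to A$. The maps $\alpha_k$ are extracted from condition (2): the asymptotic embedding supplies asymptotically multiplicative, asymptotically isometric unital maps $A \to \mathfrak{U}_{n^\infty}$, which, the target being AF, can be perturbed on any prescribed finite set into genuine unital $*$-homomorphisms into a finite matrix stage. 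The maps $\beta_k$ come from condition (3): fixing an isomorphism $A \simeq A \otimes \mathfrak{U}_{n^\infty}$, the second-leg inclusion $u \mapsto 1 \otimes u$ gives a unital embedding $\mathfrak{U}_{n^\infty} \hookrightarrow A$ whose range commutes with the first-leg copy of $A$.

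The crucial step is to show that the two composites are approximately inner perturbations of the identity. Approximately inner flip yields the uniqueness principle that two unital $*$-homomorphisms $\phi,\psi : A \to B$ with commuting ranges are approximately unitarily equivalent: the induced map $\phi\cdot\psi : A \otimes_{min} A \to B$ intertwines $\sigma$, so conjugating by $(\phi\cdot\psi)(u_k)$ carries $\phi = (\phi\cdot\psi)(\,\cdot\,\otimes 1)$ onto $\psi = (\phi\cdot\psi)(1\otimes\,\cdot\,)$ up to vanishing error. Applying this inside $A \simeq A\otimes\mathfrak{U}_{n^\infty}$ to $\phi : a \mapsto a\otimes 1$ and $\psi : a \mapsto 1\otimes\alpha_k(a)$ --- whose ranges commute by construction --- forces $\beta_k\alpha_k$ to be approximately unitarily equivalent to $\mathrm{id}_A$; the symmetric argument, using that $\mathfrak{U}_{n^\infty}$ itself has approximately inner flip and is self-absorbing, gives $\alpha_{k+1}\beta_k \approx \mathrm{id}_{\mathfrak{U}_{n^\infty}}$. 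Absorbing the intertwining unitaries into the maps and passing to Elliott's limit then yields a $*$-isomorphism $A \simeq \mathfrak{U}_{n^\infty}$.

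\textbf{Main obstacle.} I expect the rigidity step to be the hard part: converting the abstract hypotheses into the approximate unitary equivalence of $\beta_k\alpha_k$ with $\mathrm{id}_A$ means handling the non-multiplicative errors of the maps coming from the asymptotic embedding simultaneously with the conjugation estimates from the approximately inner flip, while manufacturing the commuting-range hypothesis by spending a tensor factor of $\mathfrak{U}_{n^\infty}$ via self-absorption. Keeping all error bounds uniform over growing finite subsets so that Elliott's intertwining genuinely converges is where the real work lies; the forward direction and the structural facts about approximately inner flip are comparatively routine.
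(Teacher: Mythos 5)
One preliminary remark on the comparison you were asked for: the paper does not prove this statement at all. It is quoted as Theorem 5.1 of \cite{RosenbergEffros} and used as a black box; all of the paper's own work goes into verifying its three hypotheses for $\oni_q^{\mathbb{T}}$. So your proposal can only be measured against the argument in the literature. In that respect your architecture has the right shape: the forward direction by interleaving tensor legs and permutation unitaries, and, for the converse, the uniqueness principle that an approximately inner flip makes two unital $*$-homomorphisms with commuting ranges approximately unitarily equivalent (conjugate by $(\phi\cdot\psi)(u_k)$), fed into a two-sided approximate intertwining between $A$ and $\mathfrak{U}_{n^\infty}$. That commuting-range rigidity is indeed the engine of the Effros--Rosenberg proof.

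The gap is in how you manufacture the maps $\alpha_k$. You claim that the asymptotic embedding's asymptotically multiplicative maps, ``the target being AF, can be perturbed on any prescribed finite set into genuine unital $*$-homomorphisms into a finite matrix stage.'' That is impossible in precisely the situation the theorem covers. Condition (1) forces $A$ to be simple (a structural consequence of approximately inner flip that you invoke yourself), and condition (3) forces $A$ to be infinite dimensional, since no finite-dimensional algebra is isomorphic to its tensor product with $\mathfrak{U}_{n^\infty}$. But a simple unital infinite-dimensional $C^*$-algebra admits no unital $*$-homomorphism into any finite-dimensional $C^*$-algebra whatsoever: such a map would be injective by simplicity and unitality, forcing $A$ to be finite dimensional. (The claim already fails for $A=\mathfrak{U}_{n^\infty}$ itself.) So the maps coming from condition (2) are irreducibly non-multiplicative --- this is the very reason the hypothesis is phrased as an \emph{asymptotic} embedding, and why in this paper it is obtained from quasidiagonality rather than from an honest embedding. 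Your uniqueness principle, stated and proved for genuine homomorphisms $\phi,\psi$, therefore cannot be applied to $\psi\colon a\mapsto 1\otimes\alpha_k(a)$ as written, and the same defect infects the composite $\alpha_{k+1}\beta_k$. The repair --- an approximate-uniqueness statement for u.c.p.\ asymptotically multiplicative maps, in which $(\phi\cdot\psi)(u_k)$ is only approximately a unitary, an Elliott intertwining run with such maps, and rigidification to genuine homomorphisms only in the direction out of $\mathfrak{U}_{n^\infty}$, where restrictions to the matrix stages $M_{n^m}$ can be perturbed to honest embeddings because finite-dimensional algebras are stable under approximate morphisms --- is the actual content of the theorem, not a deferred technicality; your closing paragraph gestures at it, but the construction you propose rests on the false perturbation step.
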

In order to prove that $\oni_q^\mathbb{T} \simeq \oni_0^\mathbb{T}$ we show that $\oni_q^\mathbb{T}$ satisfies conditions of Theorem \ref{RosenbergUHF}, thus $\oni_q^\mathbb{T} \simeq \mathfrak{U}_{n^\infty}$.

\subsection{Step 2, crossed product by an endomorphism: $\oni_q \simeq \oni_q^{\mathbb{T}} \rtimes \mathbb{N}$}
There are different models for a crossed product by endomorphism: Doplicher, Stacey, Murphy, Exel, Paschke, Kwasniewski gave their different visions on it. Under certain conditions these models coincide.

\begin{definition}
Let $A$ be a $C^*$-algebra and $S$ be a nonunitary isometry in $\mathbb{B}(\mathcal{H})$ such that $S A S^* \subset A$ and $S^* A S \subset A$. Then $C^*(A, S)$ is called crossed product of $A$ by $S$ in the sense of Paschke.
\end{definition}

We will need to use results for the Stacey crossed product by endomorphism, so we state a result which ensures that the Paschke crossed product is isomorphic to the Stacey crossed product.

\begin{proposition}[\cite{Ortega}, Example 1.19]\label{OrtegaEquivalenceModels}
Let $A$ be a $C^*$-algebra with faithful trace and $S$ be a nonunitary isometry such that $S A S^* \subset A$ and $S^* A S \subset A$. Suppose there are no nontrivial ideals $I$ in $A$ such that $S I S^* \subset I$ or $S^* I S \subset I$. Define endomorphism $\beta$ of $A$ to be $\beta(a) = S a S^*$. Then the Stacey crossed product $A \rtimes_\beta \mathbb{N}$ is isomorphic to the Paschke crossed product $C^*(A, S)$. Moreover, the crossed product is a simple $C^*$-algebra.
\end{proposition}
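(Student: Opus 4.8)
The plan is to deduce both assertions of the proposition from a single fact, namely that $A \rtimes_\beta \mathbb{N}$ is simple. First I would produce the comparison map. The concrete pair consisting of the inclusion $A \subset \mathbb{B}(\mathcal{H})$ and the isometry $S$ is a covariant representation of $(A, \beta)$, since $S a S^* = \beta(a)$ and $S^* S = 1$; hence the universal property of the Stacey crossed product yields a surjective $*$-homomorphism $\Phi \colon A \rtimes_\beta \mathbb{N} \to C^*(A, S)$ restricting to the identity on $A$. In particular $\Phi \neq 0$, so $\ker \Phi$ is a proper ideal. Therefore, once simplicity of $A \rtimes_\beta \mathbb{N}$ is established, $\ker \Phi = 0$ and $\Phi$ is the desired isomorphism; both claims follow at once.

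Next I would set up the gauge machinery on $B := A \rtimes_\beta \mathbb{N}$. Writing $v$ for the canonical isometry implementing $\beta$ (so $v a v^* = \beta(a)$ and $v^*v = 1_A$), the pair consisting of the canonical copy of $A$ together with $zv$ is again covariant for every $z \in \mathbb{T}$, so universality provides a point-norm continuous action $\gamma$ of $\mathbb{T}$ fixing $A$ and scaling $v$ by $z$. Averaging over $\gamma$ gives a faithful conditional expectation $E \colon B \to F$ onto the fixed-point algebra $F := B^{\gamma} = \overline{\mathrm{span}}\{\, v^{*m} a v^m : a \in A,\ m \ge 0 \,\}$. Using $v^{*m} a v^m = v^{*(m+1)} \beta(a) v^{m+1}$ one identifies $F \cong \varinjlim (A, \beta)$, with isometric connecting maps because $\beta$ is isometric: $\|\beta(a)\| = \|SaS^*\| \le \|a\|$, while $a = S^*\beta(a)S$ forces $\|a\| \le \|\beta(a)\|$. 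Finally the given trace is $\beta$-invariant, since $\tau(SaS^*) = \tau(aS^*S) = \tau(a)$, so it passes to the limit as a faithful trace $\tau_F$ on $F$.

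The core is then simplicity of $B$, for which I would run an ideal-intersection argument. Let $J \neq 0$ be an ideal. The decisive step is to show that $E$ ``sees'' $J$, i.e. $J \cap F \neq 0$: this is the ideal-intersection (topological-freeness) property, and it is here that the faithful trace enters, guaranteeing both that $E$ is faithful and that the corner endomorphism $\beta$ is sufficiently aperiodic for the expectation of a nonzero positive element of $J$ to again lie in $J$. Granting this, $J \cap F$ is a nonzero ideal of $F$ that is invariant under the canonical dynamics, since $v J v^* \subset J$ and $v^* J v \subset J$. The two hypotheses, that there is no ideal $I \subset A$ with $SIS^* \subset I$ and none with $S^*IS \subset I$, are precisely the minimality conditions ruling out proper ideals of $\varinjlim(A,\beta)$ invariant under the connecting dynamics, so they force $J \cap F = F$. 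As $1_A \in F$ is the unit of $B$, this gives $1_A \in J$ and hence $J = B$; thus $B$ is simple.

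The step I expect to be the main obstacle is the ideal-intersection property $J \cap F \neq 0$. Establishing it rigorously requires a genuine aperiodicity/outerness input for $\beta$ — the role played by the faithful trace — together with an averaging or spectral-subspace estimate controlling the off-diagonal terms $v^{*m} a v^n$ with $m \neq n$ of a positive element of $J$; the minimality hypotheses alone do not suffice, which is exactly why both a faithful trace and the absence of invariant ideals appear in the statement. An essentially equivalent route would be to prove directly that $\Phi|_F$ is isometric, using $\tau_F$ and a compatible trace on $C^*(A,S)$, and then bootstrap along the filtration by the spectral subspaces of $\gamma$; but the aperiodicity input remains the crux either way.
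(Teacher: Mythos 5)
The paper never proves this proposition: it is imported verbatim from the literature (Ortega--Pardo, Example 1.19, which in turn rests on Paschke's simplicity theorem for $C^*(A,S)$), so your attempt can only be judged on its own merits. Your global reduction is sound: the concrete pair $(A,S)$ is covariant, universality of the Stacey crossed product gives a surjection $\Phi\colon A\rtimes_\beta\mathbb{N}\to C^*(A,S)$, and simplicity of the source yields both assertions at once; likewise the gauge action, the conditional expectation $E$ (which is automatically faithful, being an average over the compact group $\mathbb{T}$ --- no trace is needed for that), and the identification $F\simeq\varinjlim(A,\beta)$ are all correct. But the argument has a genuine gap exactly where you flag it: you never prove that every nonzero ideal $J$ of $A\rtimes_\beta\mathbb{N}$ meets $F$. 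That ideal-intersection property is the entire content of Paschke's theorem --- it is precisely where the faithful trace does its work --- and deferring it to ``a genuine aperiodicity/outerness input'' is not a proof. Nothing after your first paragraph establishes simplicity, so neither conclusion of the proposition is obtained.

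Moreover, the one place where you do invoke the trace is wrong. The computation $\tau(SaS^*)=\tau(aS^*S)=\tau(a)$ applies the tracial identity $\tau(xy)=\tau(yx)$ with $x=Sa$ and $y=S^*$, which are not elements of $A$; the trace is defined, and tracial, only on $A$. The conclusion is false in general, and in fact false in the very situation where the paper applies this proposition: for $A=\mathfrak{U}_{n^\infty}$ (the fixed-point algebra $\oni_q^{\mathbb{T}}$) with $\beta=\mathsf{Ad}(s_1)$, the unique trace satisfies $\tau(\beta(a))=\tfrac{1}{n}\tau(a)$, not $\tau(a)$. So there is no $\beta$-invariant trace passing to $\varinjlim(A,\beta)$, and whatever role the faithful trace plays in the ideal-intersection argument, it is not the one you propose. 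To close the gap you would need to reproduce (or simply cite, as the paper does) Paschke's actual argument, rather than reconstruct it from the gauge-invariant-uniqueness template.
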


Using Proposition \ref{OrtegaEquivalenceModels} we prove that $\oni_q$ is isomorphic to the Stacey crossed product of $\oni_q^\mathbb{T}$ by an endomorphism.

\subsection{Step 3, Kirchberg-Philips classification: $\oni_q \simeq \oni_0$}
\begin{theorem}[\cite{Ortega}, Theorem 3.6]\label{OrtegaPF}
Let $A$ be a unital non-type I $C^*$-algebra of real rank zero that has strict
comparison, let $\beta$ be an injective endomorphism of $A$ such that $\beta(1) \neq 1$ and $\beta(A)$ is a
hereditary sub-$C^*$-algebra of $A$. If the Stacey crossed product $A \rtimes_\beta \mathbb{N}$ is simple and $\beta(1)$ is a full projection of $A$, then $A \rtimes_\beta \mathbb{N}$ is purely infinite simple $C^*$-algebra.
\end{theorem}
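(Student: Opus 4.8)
The plan is to prove the conclusion in the classical style of Cuntz's analysis of $\mathcal{O}_n$, reducing pure infiniteness of $B := A \rtimes_\beta \mathbb{N}$ to a comparison computation inside the finite ``core''. First I would fix the implementing isometry $S$ of the crossed product, so that $S^*S = 1$, $SS^* = \beta(1)$ and $\beta(a) = SaS^*$; since $\beta(1) \neq 1$, the projection $q := 1 - \beta(1)$ is nonzero and $S$ is a proper isometry. The gauge circle action $\gamma_z(S) = zS$, $\gamma_z|_A = \mathrm{id}$ has fixed-point algebra exactly $A$ (using $S^*AS = A$, which follows from the identity $a = S^*\beta(a)S$ together with $S^*AS \subset A$), and averaging over $\gamma$ gives a faithful conditional expectation $E : B \to A$. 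The engine of infiniteness is that $S^k$ is a partial isometry with $(S^k)^*S^k = 1$ and $S^k(S^k)^* = \beta^k(1)$, so $1_B \sim \beta^k(1)$ for every $k$, while the projections $\beta^k(1)$ decrease properly.

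By Cuntz's characterization of purely infinite simple $C^*$-algebras, and since $B$ is simple with $B \neq \mathbb{C}$, it suffices to show $1_B \precsim a$ for every nonzero positive $a \in B$. I would obtain this in three moves. \emph{Step A (reduction to the core):} for nonzero $a \geq 0$, faithfulness of $E$ gives $E(a) \neq 0$, and real rank zero of $A$ supplies a nonzero spectral projection $e \in A$ on which $E(a)$ is bounded below; a standard graded approximation, compressing $a$ by $e$ and by high powers $S^{*N}(\cdot)S^N$ to suppress the nonzero-degree components exactly as in \cite{cun}, then yields $e \precsim a$ in $B$. \emph{Step B (comparison in the finite core):} I would show $\beta^k(1) \precsim_A e$ for $k$ large, and this is where the hypotheses on $A$ enter, strict comparison reducing the claim to the dimension estimate $d(\beta^k(1)) < d(e)$ for every lower semicontinuous $2$-quasitrace $d$ on $A$. \emph{Step C (lifting):} combining, $1_B \sim \beta^k(1) \precsim_A e \precsim a$, so $1_B \precsim a$ and the characterization applies; real rank zero and the non-type-I hypothesis are what guarantee the supply of projections and the richness of the quasitrace cone needed in Steps A and B.

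The main obstacle is Step B, namely controlling all dimension functions of $A$ simultaneously. Because $S \notin A$ one cannot use $S$ to relate $d(\beta^k(1))$ to $d(1)$ inside the stably finite core; instead one must prove that $\beta$ uniformly contracts the scale, $\sup_d d(\beta^k(1)) \to 0$, while keeping $\inf_d d(e)$ bounded away from zero. The telescoping identity $1 - \beta^k(1) = \sum_{i=0}^{k-1}\beta^i(q)$ into the pairwise orthogonal projections $\beta^i(q) = \beta^i(1) - \beta^{i+1}(1)$ shows that $d(\beta^k(1))$ decreases to some limit $L_d \geq 0$, and the content of the step is to force $L_d = 0$ for all $d$; this is exactly where I would invoke that $\beta(1)$ is a full projection and that $B$ is simple, equivalently that $A$ admits no nontrivial $\beta$-invariant ideals, to rule out any $\beta$-invariant ``mass'' on which a dimension function could concentrate, fullness likewise being what allows $e$ to be taken so that $\inf_d d(e) > 0$. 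In the situation of this paper, where $A \simeq \mathfrak{U}_{n^\infty}$ has a unique trace, Step B is immediate, since $d(\beta^k(1))$ is the geometric sequence $n^{-k} \to 0$, so the entire difficulty of the general theorem collapses; but it is precisely this uniform trace-scaling estimate that carries the weight in the non-type-I, non-unique-trace generality of the statement.
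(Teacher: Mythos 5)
A preliminary remark: the paper does not prove this statement at all --- it is quoted as Theorem 3.6 of \cite{Ortega} and used as a black box --- so there is no internal proof to compare against, and I assess your argument on its own terms. Your global architecture (faithful gauge expectation $E : B \to A$ onto the core, the infiniteness $1_B \sim \beta^k(1)$, Cuntz's criterion $1_B \precsim a$, and the three-step reduction) is the right general shape, but two of the steps have genuine gaps.

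The first gap is Step A. The claim that compressing by high powers $S^{*N}(\cdot)S^N$ suppresses the nonzero-degree components ``exactly as in \cite{cun}'' is false: for $N \geq m \geq n$ one computes
\[ S^{*N}\bigl(S^m a S^{*n}\bigr)S^N = \bigl(S^{*(N-m)}\, a\, S^{N-m}\bigr)\, S^{m-n}, \]
which is again a monomial of the same gauge degree $m-n$ and, in general, of undiminished norm (take $a = \beta^{N-m}(c)$ to see there is no decay). Cuntz's degree-killing device relies on having $n \geq 2$ mutually orthogonal isometries, from which he builds an isometry that implements the expectation on a prescribed finite-dimensional piece; a Stacey crossed product has only the single isometry $S$, and no such combinatorial mechanism exists. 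What is actually needed is a Kishimoto/Olesen--Pedersen type almost-orthogonality statement --- a positive element or projection $e \in A$ with $\lVert e E(a) e \rVert$ close to $\lVert E(a)\rVert$ while $\lVert e\, x\, \beta^j(e) \rVert$ is small for the finitely many relevant degrees $j \neq 0$ --- and that requires proper outerness of (a dilation of) $\beta$, extracted from simplicity of $B$, typically after realizing $B$ as a full corner of $A_\infty \rtimes_{\beta_\infty} \mathbb{Z}$ with $A_\infty = \varinjlim (A, \beta)$. This is a missing idea, not a routine adaptation of \cite{cun}.

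The second gap is the uniformity you yourself flag in Step B, which your sketch does not close. You need both $\sup_d d(\beta^k(1)) \to 0$ and $\inf_d d(e) > 0$. The first half is salvageable: a nonzero limit $L_d$ would (via a Banach limit of the quasitraces $d \circ \beta^k$) produce a $\beta$-invariant dimension function, hence a quasitrace on the simple unital algebra $B$, which is impossible because $1_B$ is infinite; then Dini's theorem on the compact set of normalized quasitraces upgrades pointwise to uniform convergence. But the second half breaks down precisely because $A$ is \emph{not} assumed simple: quasitraces on $A$ need not be faithful, so the projection $e$ that Step A hands you may satisfy $d(e) = 0$ for some $d$, and strict comparison then gives nothing. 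Fullness of the single projection $\beta(1)$ does not repair this, since you have no control over where $e$ lands; one needs either to choose $e$ inside a suitably $\beta$-saturated full hereditary subalgebra, or to run the comparison in $A_\infty$ or in $B$ rather than in $A$. Your closing observation --- that for the application in this paper, where $A \simeq \mathfrak{U}_{n^\infty}$ has a unique trace, both difficulties collapse --- is correct, but it does not prove the cited theorem in the stated generality.
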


We use Theorem \ref{OrtegaPF} to show that $\oni_q$ is nuclear purely infinite simple $C^*$-algebra which satisfy Universal Coefficient Theorem.

\begin{theorem}[\cite{Philips}, Theorem 4.2.4]\label{Philips}
Let $A$ and $B$ be separable nuclear unital purely infinite simple $C^*$-algebras which satisfy Universal Coefficient Theorem, and suppose that there exists a graded isomorphism $\alpha : K_*(A) \rightarrow K_*(B)$ such that $\alpha([1_A]) = [1_B]$. Then $A \simeq B$.
\end{theorem}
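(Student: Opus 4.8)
The statement is the Kirchberg--Phillips classification theorem for unital Kirchberg algebras satisfying the UCT, and its proof rests on three pillars: the Universal Coefficient Theorem, a matched pair of \emph{existence} and \emph{uniqueness} theorems for $*$-homomorphisms taken up to asymptotic unitary equivalence, and an Elliott-style intertwining argument. The plan is to reduce the abstract $K$-theoretic datum $\alpha$ to a geometric one and then manufacture mutually inverse $*$-homomorphisms whose compositions are forced to be close to the identities. First I would promote $\alpha$ to an invertible $KK$-class. Since $A$ and $B$ satisfy the UCT, the natural short exact sequence $0 \to \mathrm{Ext}(K_*(A), K_{*+1}(B)) \to KK(A,B) \to \Hom(K_*(A), K_*(B)) \to 0$ holds; because $\alpha$ is an isomorphism, naturality and a five-lemma argument yield an invertible $x \in KK(A,B)$ with $K_*(x) = \alpha$, together with $x^{-1} \in KK(B,A)$ inducing $\alpha^{-1}$. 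The hypothesis $\alpha([1_A]) = [1_B]$ records that $x$ is compatible with the units at the level of $K_0$.

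Next comes the \textbf{existence theorem}: I would realize $x$ and $x^{-1}$ by genuine unital $*$-homomorphisms $\varphi \colon A \to B$ and $\psi \colon B \to A$ with $[\varphi] = x$ and $[\psi] = x^{-1}$ in $KK$. This is where the hypotheses on $B$ enter constructively. Using Kirchberg's absorption isomorphism $B \simeq B \otimes \mathcal{O}_\infty$, together with the fact that $B$ is purely infinite simple and hence carries an abundance of isometries with full, comparable projections, one shows that every admissible $KK$-datum is attained by an actual unital homomorphism; the purely infinite structure is precisely what makes the prescribed $K_0$-class of $\varphi(1)$ geometrically realizable.

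The \textbf{uniqueness theorem} is the analytic heart and the main obstacle: any two unital $*$-homomorphisms $A \to B$ inducing the same class in $KK(A,B)$ are asymptotically unitarily equivalent, i.e.\ there is a norm-continuous path $(u_t)_{t \in [0,\infty)}$ of unitaries in $B$ with $u_t \varphi(a) u_t^* \to \varphi'(a)$ for every $a$. The hard part will be this step, and it is where separability, nuclearity and pure infiniteness are all genuinely used. The strategy is to tensor both maps with $\mathcal{O}_\infty$ to place them in a standard absorbing form, then apply a stable uniqueness theorem of Dadarlat--Eilers / Lin type, exploiting that in a purely infinite simple algebra two full projections with equal $K_0$-class are Murray--von Neumann equivalent, to build intertwining unitaries on larger and larger finite subsets and patch them into a single continuous path. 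Kirchberg's $\mathcal{O}_2$- and $\mathcal{O}_\infty$-absorption machinery supplies the room needed to correct the discrepancy between $\varphi$ and $\varphi'$ without leaving $B$.

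Finally I would run the \textbf{intertwining} argument. Since $[\psi \circ \varphi] = x^{-1} x = [\mathrm{id}_A]$ in $KK(A,A)$ and $[\varphi \circ \psi] = [\mathrm{id}_B]$ in $KK(B,B)$, the uniqueness theorem makes $\psi \circ \varphi$ and $\varphi \circ \psi$ asymptotically unitarily equivalent to the respective identity maps. An Elliott approximate two-sided intertwining then upgrades this approximate invertibility to an honest isomorphism: one constructs sequences of maps converging in the point-norm topology, using the unitary paths to correct at each stage, whose limits are mutually inverse unital $*$-isomorphisms, giving $A \simeq B$. Everything outside the uniqueness theorem is UCT bookkeeping or the formal intertwining; the uniqueness theorem is the step that cannot be shortened and carries the full weight of Kirchberg's absorption results.
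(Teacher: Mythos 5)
This statement is not proved in the paper at all: it is quoted as an external result (Theorem 4.2.4 of \cite{Philips}, the Kirchberg--Phillips classification theorem) and used as a black box, so there is no internal proof to compare yours against. Judged against the actual proof in the cited reference, your outline is faithful: one lifts $\alpha$ through the UCT sequence $0 \to \mathrm{Ext}(K_*(A),K_{*+1}(B)) \to KK(A,B) \to \Hom(K_*(A),K_*(B)) \to 0$ to an invertible $KK$-element compatible with the units, invokes matched existence and uniqueness theorems (uniqueness being asymptotic unitary equivalence of maps with the same $KK$-class, resting on $\mathcal{O}_\infty$-absorption and comparison of full projections), and closes with an Elliott-style two-sided approximate intertwining. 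The one technical inaccuracy worth flagging is that Phillips's existence and uniqueness theorems are formulated for \emph{full asymptotic morphisms} into $\mathbb{K} \otimes B$ rather than for honest unital $*$-homomorphisms as you assert; genuine homomorphisms realizing the prescribed classes emerge only through the intertwining machinery (or, in Kirchberg's parallel approach, through his embedding and absorption theorems), so your "existence" step as literally stated is stronger than what is proved directly and would need that reformulation to be correct.
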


We compute K-theory of $\oni_q$ and use Theorem \ref{Philips} to show that $\oni_q \simeq \oni_0$.

\subsection{Step 4, Gabe-Ruiz classification of unital extensions: $\ghani_q \simeq \ghani_0$}
Consider two six-term exact sequences
\[
x^i :
\begin{tikzcd}
H_0^i \arrow[r] & L_0^i \arrow[r] & G_0^i \arrow[d] \\
H_1^i \arrow[u] & L_1^i \arrow[l] & G_1^i \arrow[l]
\end{tikzcd}
\]
with distinguished elements $x_i \in L_0^i$, $y_i \in G_0^i$ for $i = 1, 2$. A homomorphism $(\psi_*, \rho_*, \phi_*) : x^1 \rightarrow x^2$ of six-term exact sequences consists of 
\[ \psi_* : H_*^1 \rightarrow H_*^2, \ \rho_* : L_*^1 \rightarrow L_*^2, \]
making the diagram commute and such that $\rho_0(x_1) = x_2$, $\phi_0(y_1) = y_2$. if $H_*^1 = H_*^2 = H_*$ and $G_*^1 = G_*^2 = G_*$ then we say that $x^1$ and $x^2$ are congruent if there exists isomorphism of the form $(id_{H_*}, \rho_*, id_{G_*})$.

\begin{definition}
For an extension
\[ \mathcal{E} : 0 \rightarrow B \rightarrow E \rightarrow A \rightarrow 0 \]
of unital $C^*$-algebras we let $K_{six}^u(\mathcal{E})$ denote the six-term exact sequence in K-theory with distinguished elements $[1_E] \in K_0(E)$ and $1_A \in K_0(A)$.
\end{definition}

\begin{theorem}[Proposition 5.8, \cite{GabeRuiz}]\label{Gabe}
Let 
\[ \mathcal{E}_i : 0 \rightarrow B \rightarrow E_i \rightarrow A \rightarrow 0 \]
be unital extensions of $C^*$-algebras for $i = 1, 2$ such that $A$ is a unital UCT Kirchberg algebra and $B$ is a stable AF algebra. If $K_{six}^u(\mathcal{E}_1)$ is congruent to $K_{six}^u(\mathcal{E}_2)$ then $E_1 \simeq E_2$.
\end{theorem}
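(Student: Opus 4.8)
The plan is to deduce the statement from the classification of separable nuclear $C^*$-algebras carrying a single distinguished ideal, for which the complete invariant is exactly the unital six-term sequence $K_{six}^u$. I would split the argument into a universal-coefficient part, which produces an invertible class in the ideal-related $KK$-theory out of the congruence, and a classification part, which upgrades that class to a $*$-isomorphism. To begin, I record that each $E_i$ is separable and nuclear, since an extension of a nuclear algebra by a nuclear algebra is nuclear, and that each $E_i$ satisfies the UCT: the bootstrap class is closed under extensions and both $A$ and $B$ lie in it. Hence one may work in the ideal-related (filtered) $KK$-theory $KK(X;-,-)$ attached to the two-point primitive ideal space $X$ of each $E_i$, in which the natural invariant is precisely the six-term sequence together with its maps $K_*(B)\to K_*(E_i)\to K_*(A)$ and the connecting maps; a congruence in the sense of the statement is exactly an isomorphism of this filtered K-theory.

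For the universal-coefficient part, I would invoke the UCT for one-ideal filtered $KK$-theory. Because the index space is the simplest nontrivial one, the obstruction groups collapse to ordinary $\mathrm{Hom}$ and $\mathrm{Ext}^1_{\mathbb Z}$ of the six-term sequences and no higher phantom terms appear, so the filtered UCT is available. By definition a congruence $K_{six}^u(\mathcal E_1)\cong K_{six}^u(\mathcal E_2)$ is an isomorphism of filtered K-theory that is the identity on the $K_*(A)$ and $K_*(B)$ corners and carries $[1_{E_1}]$ to $[1_{E_2}]$; as in the classical Rosenberg--Schochet argument, such an isomorphism lifts to an invertible element $\kappa\in KK(X;E_1,E_2)$, that is, an ideal-related $KK$-equivalence of the two extensions preserving the unit class.

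The classification part promotes $\kappa$ to a genuine $*$-isomorphism $E_1\to E_2$, and this is where I expect the main obstacle. The difficulty is that the $E_i$ are of mixed type: the ideal $B$ is stably finite (AF) while the quotient $A$ is purely infinite, so neither Elliott's AF classification nor the Kirchberg--Phillips theorem applies on its own. I would run a two-sided Elliott intertwining at the level of Busby invariants $\tau_i\colon A\to \mathcal M(B)/B$. Since $A$ is nuclear, Kasparov absorption lets one add a trivial split extension and arrange each $\tau_i$ to be absorbing, after which an existence theorem realises the prescribed $KK$-class by morphisms and a uniqueness theorem shows any two such morphisms are approximately unitarily equivalent. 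The uniqueness input combines Kirchberg--Phillips uniqueness for the Kirchberg quotient with Elliott uniqueness for the AF ideal, glued across the extension using the favourable corona properties of a stable AF algebra (such as $K_1$-injectivity of $\mathcal M(B)/B$). Alternating existence and uniqueness then yields an isomorphism inducing $\kappa$ on the invariant.

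Finally, the distinguished unit $[1_{E_i}]\in K_0(E_i)$ is part of $K_{six}^u$ and is matched by the congruence, so the invertible class $\kappa$ carries $[1_{E_1}]$ to $[1_{E_2}]$; tracking this through the intertwining forces the resulting isomorphism $E_1\to E_2$ to be unital. This completes the argument and in fact produces an isomorphism respecting the ideal $B$, which is stronger than the asserted abstract isomorphism $E_1\simeq E_2$.
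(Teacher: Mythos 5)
This statement is not proved in the paper at all: it is imported verbatim, with attribution, as Proposition 5.8 of Gabe--Ruiz \cite{GabeRuiz}, and the paper's ``proof'' consists of the citation alone. So there is no internal argument to compare yours against; what can be judged is whether your sketch is a viable reconstruction of the cited result.

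Your skeleton is the right one, and it matches the strategy used in the literature on classification of extensions: pass to ideal-related (one-ideal filtered) $KK$-theory, use the UCT for the two-point ideal space (Bonkat) to lift the congruence of six-term sequences to an invertible ideal-related $KK$-element, then promote that element to an isomorphism via an existence/uniqueness intertwining on Busby invariants, splicing Kirchberg--Phillips uniqueness on the quotient with AF uniqueness on the ideal. But there is a genuine gap exactly where Gabe and Ruiz make their actual contribution: the \emph{unital} refinement. The standard absorption theory (Elliott--Kucerovsky purely large/nuclearly absorbing extensions) and the standard Ext/$KK$ machinery classify extensions up to equivalences that ignore the unit; a unital Busby invariant can never be absorbing in that sense, since adding a nonunital trivial extension destroys unitality, so one needs unital (unitizable) absorption and a \emph{unital} Ext-group, which in general differs from the ordinary one. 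Matching the classes $[1_{E_1}]$ and $[1_{E_2}]$ in $K_0$ does not by itself force your intertwining to converge to a unital isomorphism; quantifying the gap between the unital and ordinary invariants, and proving that the unital six-term sequence is nonetheless complete for this class of extensions, is precisely the content of the Gabe--Ruiz paper. Your phrase ``tracking this through the intertwining'' silently assumes that entire layer. A secondary omission: invoking absorption at all requires (unitizable) fullness of the Busby invariants, which must be verified (here it does follow from simplicity of $A$ and stability of $B$, but it needs to be said). So: correct architecture, but the distinctive technical content of the theorem is assumed rather than proved.
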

Since $\ghani_q$ fits into short exact sequence
\[ 0 \rightarrow \mathbb{K} \rightarrow \ghani_q \rightarrow \oni_q \simeq \mathcal{O}_n \rightarrow 0, \]
Theorem \ref{Gabe} will be used to show that $\ghani_q \simeq \ghani_0$.

\section{Approximation of flip}
In this section we will show that $\oni_q^\mathbb{T}$ has flip approximation property. In order to comfortably make computations in $\oni_q^\mathbb{T} \otimes_{min} \oni_q^\mathbb{T}$, we will use a spatial model for a tensor product - it will be $(\oni_q^{L,R})^\mathbb{T}$.

\begin{lemma}[Lemma 3.1, \cite{Shlyakhtenko}]
\[ [(L_i^q)^*, R_j^q] \vert_{\mathcal{F}^q_n} = \delta_{ij} q^n \id_{\mathcal{F}_n^q}. \]
In particular, $[\ghani_q^L, \ghani_q^R] \subset \mathbb{K}(\mathcal{F}^q)$.
\end{lemma}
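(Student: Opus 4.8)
The plan is to first verify the operator identity by a direct computation on basis tensors, and then deduce the containment of commutators in the compacts by combining this identity with a Calkin-algebra argument.

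First I would fix a basis tensor $\xi = e_{i_1} \otimes \cdots \otimes e_{i_n} \in \mathcal{F}_n^q$ and compute the two orderings separately. Applying $R_j^q$ first appends $e_j$ at the right end, producing a tensor of rank $n+1$; applying $(L_i^q)^*$ to it, via its defining formula, yields a sum over $m = 1, \ldots, n+1$ of terms with the $m$-th factor deleted and weighted by $q^{m-1}\delta_{i i_m}$ (with the convention $i_{n+1} = j$). In the other order $(L_i^q)^*$ acts first, giving the sum over $m = 1, \ldots, n$, after which $R_j^q$ reappends $e_j$. All terms with $m \le n$ cancel between the two orderings, and the only surviving contribution is the boundary term $m = n+1$, equal to $q^n \delta_{ij}\, \xi$, so that
\[ [(L_i^q)^*, R_j^q]\vert_{\mathcal{F}_n^q} = \delta_{ij} q^n \id_{\mathcal{F}_n^q}. \]
The one place to keep track of carefully is this boundary term: it is precisely the contribution of the newly appended factor, and it is what escapes the cancellation.

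Next, for the compactness assertion, I would observe that $[(L_i^q)^*, R_j^q]$ is diagonal with respect to the grading $\mathcal{F}^q = \bigoplus_n \mathcal{F}_n^q$, acting as the scalar $\delta_{ij} q^n$ on the finite-dimensional block $\mathcal{F}_n^q$. Since $|q| < 1$ we have $q^n \to 0$, so the operator is the norm limit of its finite-rank truncations to $\bigoplus_{n \le N} \mathcal{F}_n^q$, hence compact. To reach all four commutators of generators, note that $L_i^q$ and $R_j^q$ commute exactly, since one appends $e_i$ on the left and the other $e_j$ on the right: $L_i^q R_j^q \xi = e_i \otimes \xi \otimes e_j = R_j^q L_i^q \xi$. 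Taking adjoints of $[L_i^q, R_j^q] = 0$ and of the identity just proved shows that $[(L_i^q)^*, (R_j^q)^*]$ vanishes and $[L_i^q, (R_j^q)^*]$ is compact. Thus every commutator of a generator of $\ghani_q^L$ (or its adjoint) with a generator of $\ghani_q^R$ (or its adjoint) lands in $\mathbb{K}(\mathcal{F}^q)$.

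Finally I would upgrade this generator-level statement to the full $C^*$-algebras. Let $\pi$ denote the quotient onto the Calkin algebra $\mathbb{B}(\mathcal{F}^q)/\mathbb{K}(\mathcal{F}^q)$. The previous step says the self-adjoint set $\{\pi(R_j^q), \pi(R_j^q)^*\}_j$ commutes with each $\pi(L_i^q)$ and $\pi(L_i^q)^*$. Since the commutant of a self-adjoint subset of a $C^*$-algebra is itself a $C^*$-subalgebra, and $\pi(\ghani_q^L) = C^*(\pi(L_1^q), \ldots, \pi(L_n^q))$, it follows that $\pi(\ghani_q^L)$ lies in the commutant of $\{\pi(R_j^q), \pi(R_j^q)^*\}_j$; running the same argument once more with the roles reversed puts $\pi(\ghani_q^R)$ in the commutant of $\pi(\ghani_q^L)$. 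Hence $\pi(\ghani_q^L)$ and $\pi(\ghani_q^R)$ commute in the Calkin algebra, which is exactly the claim $[\ghani_q^L, \ghani_q^R] \subset \mathbb{K}(\mathcal{F}^q)$. I expect this propagation from generators to the generated algebras — rather than the explicit computation — to be the only conceptually substantive point, and it is resolved cleanly by the fact that commuting with a fixed self-adjoint set is a norm-closed, $*$-closed condition.
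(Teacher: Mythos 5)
Your proof is correct and complete: the direct computation on basis tensors (with the boundary term $m=n+1$ supplying $\delta_{ij}q^n\xi$ and all other terms cancelling), the compactness of the resulting block-diagonal operator since each $\mathcal{F}_n^q$ is finite-dimensional and $|q|^n\to 0$, and the Calkin-algebra commutant argument to pass from generators and their adjoints to the full $C^*$-algebras are all sound. The paper itself gives no proof of this lemma — it cites Lemma 3.1 of Shlyakhtenko — and your argument is precisely the standard one behind that reference, with the added benefit that you spell out the deduction of $[\ghani_q^L, \ghani_q^R] \subset \mathbb{K}(\mathcal{F}^q)$, which the paper leaves implicit.
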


\begin{corollary} \label{Commut}
\[ [\oni_q^L, \oni_q^R] = \{0\}. \]
\end{corollary}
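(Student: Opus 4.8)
The plan is to read the corollary off directly from the preceding Lemma together with the definition of the quotient algebras, so the argument is essentially bookkeeping. First I would pin down the ambient algebra in which the bracket $[\oni_q^L, \oni_q^R]$ is to be interpreted. By Proposition \ref{Invariance} the ideal $\mathbb{K}(\mathcal{F}^q)$ is contained in each of $\ghani_q^L$, $\ghani_q^R$, and in $\ghani_q^{L,R}$, so the quotient map $\pi : \ghani_q^{L,R} \to \oni_q^{L,R}$ restricts to the quotient maps $\ghani_q^L \to \oni_q^L$ and $\ghani_q^R \to \oni_q^R$. Hence $\oni_q^L = \pi(\ghani_q^L)$ and $\oni_q^R = \pi(\ghani_q^R)$ are realised as subalgebras of the one common algebra $\oni_q^{L,R}$, which is what makes the commutator meaningful in the first place.

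Next I would invoke the ``in particular'' clause of the preceding Lemma, namely $[\ghani_q^L, \ghani_q^R] \subset \mathbb{K}(\mathcal{F}^q)$. Concretely this rests on the fact that the mixed commutators of the generators are compact: $[L_i^q, R_j^q] = 0$ and $[(L_i^q)^*, (R_j^q)^*] = 0$ hold on the nose, since $L_i^q$ appends a tensor factor on the left while $R_j^q$ appends one on the right, whereas $[(L_i^q)^*, R_j^q]$ acts as $\delta_{ij} q^m$ on $\mathcal{F}_m^q$, an operator which is compact because $q^m \to 0$ and each $\mathcal{F}_m^q$ is finite-dimensional. Since the commutator is a derivation in each variable, every commutator of a word in the $L_i^q, (L_i^q)^*$ with a word in the $R_j^q, (R_j^q)^*$ lies in the closed ideal generated by these compact operators, i.e. in $\mathbb{K}(\mathcal{F}^q)$; passing to norm limits extends the inclusion to all of $\ghani_q^L$ and $\ghani_q^R$.

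Finally, since $\mathbb{K}(\mathcal{F}^q) = \ker \pi$, for any $a \in \ghani_q^L$ and $b \in \ghani_q^R$ one has $[\pi(a), \pi(b)] = \pi([a,b]) = 0$. Letting $\pi(a)$ and $\pi(b)$ range over $\oni_q^L$ and $\oni_q^R$ yields $[\oni_q^L, \oni_q^R] = \{0\}$, as claimed.

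There is no genuine analytic obstacle here: all of the substance already sits in the Lemma, which says that the two families, commuting only up to compacts on $\mathcal{F}^q$, become honestly commuting once one passes to the Calkin-type quotient by $\mathbb{K}(\mathcal{F}^q)$. The only point that requires care is organizational, namely checking that $\oni_q^L$ and $\oni_q^R$ are correctly viewed as commuting subalgebras of the single quotient $\oni_q^{L,R}$, rather than as a priori unrelated quotients of two distinct algebras; once this identification is made, the vanishing of the commutator is immediate.
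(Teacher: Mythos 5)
Your proof is correct and is essentially the argument the paper intends: the corollary is stated without proof precisely because it follows immediately from the preceding Lemma's inclusion $[\ghani_q^L, \ghani_q^R] \subset \mathbb{K}(\mathcal{F}^q)$ by passing to the quotient $\oni_q^{L,R} = \ghani_q^{L,R}/\mathbb{K}(\mathcal{F}^q)$, in which $\oni_q^L$ and $\oni_q^R$ both sit. Your additional verification of the Lemma itself (exact commutation of $L_i^q$ with $R_j^q$, compactness of $[(L_i^q)^*, R_j^q]$ via $\delta_{ij}q^m$ on finite-dimensional levels, and the derivation argument for words) matches the cited result of Shlyakhtenko and is sound.
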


\begin{lemma} \label{AlgInj}
Inclusion of the algebraic tensor product $\oni_q^L \odot \oni_q^R \hookrightarrow \oni_q^{L, R}$ defined on elementary tensors by $l \otimes r \mapsto lr$ is an injective $*$-homomorphism.
\end{lemma}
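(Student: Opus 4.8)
The plan is to observe first that the map is a well-defined $*$-homomorphism, and then to reduce injectivity to the existence of sufficiently many product states, which I would manufacture from the spatial Fock-space picture.

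By Corollary \ref{Commut} the canonical images of $\oni_q^L$ and $\oni_q^R$ inside $\oni_q^{L,R}$ commute, so the universal property of the algebraic tensor product of two commuting $*$-subalgebras supplies a $*$-homomorphism $\theta : \oni_q^L \odot \oni_q^R \to \oni_q^{L,R}$, $\theta(l \otimes r) = lr$; only injectivity requires argument. I would deduce it from the following claim: for every state $\phi$ on $\oni_q^L$ and every state $\psi$ on $\oni_q^R$ there is a state $\omega$ on $\oni_q^{L,R}$ with $\omega(xy) = \phi(x)\psi(y)$ for all $x \in \oni_q^L$, $y \in \oni_q^R$. Granting this, suppose $t = \sum_{i=1}^m a_i \otimes b_i \in \ker \theta$ with $b_1, \dots, b_m$ linearly independent. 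Pick by Hahn–Banach a functional $g$ on $\oni_q^R$ with $g(b_j) = \delta_{1j}$ and a functional $f$ on $\oni_q^L$ with $f(a_1) \neq 0$, write $f$ and $g$ as finite linear combinations of states, and assemble the associated product states into a functional $\Lambda$ on $\oni_q^{L,R}$ with $\Lambda(\theta(t)) = \sum_i f(a_i) g(b_i) = f(a_1) \neq 0$, contradicting $\theta(t) = 0$. Hence $a_1 = 0$, and likewise every $a_i = 0$, so $t = 0$. Conceptually this exhibits $\oni_q^{L,R}$ as a spatial model for a tensor product of $\oni_q^L$ and $\oni_q^R$.

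To produce the product states I would work in the faithful representation on $\mathcal{F}^q$. A state on $\oni_q^L = \ghani_q^L / \mathbb{K}(\mathcal{F}^q)$ is exactly a state on $\ghani_q^L$ annihilating $\mathbb{K}(\mathcal{F}^q)$, and such states are weak$^*$ limits of vector states $\langle \, \cdot \, \xi_k, \xi_k \rangle$ for unit vectors $\xi_k$ supported in tensor degrees $\geq k$, which tend weakly to $0$ and kill the compacts in the limit. Fixing such sequences $(\xi_k)$ for $\phi$ and $(\eta_k)$ for $\psi$, I would form combined unit vectors
\[ v_k = \xi_k \otimes \beta_k \otimes \eta_k \in \mathcal{F}^q, \]
the concatenation of $\xi_k$, a buffer tensor $\beta_k$ of length $s_k \to \infty$, and $\eta_k$. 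Since $L_i^q$ creates on the far left and $R_i^q$ on the far right, each $L_i^q$ acts on $v_k$ as on the left block and each $R_i^q$ as on the right block, while the annihilation operators $(L_i^q)^*$, $(R_i^q)^*$ are only exponentially weakly delocalised, their reach across a block of length $\ell$ being damped by $|q|^\ell$. The candidate product state is then $\omega = \lim_k \langle \, \cdot \, v_k, v_k \rangle$ along a suitable subsequence.

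The technical heart — and the step I expect to be the main obstacle — is the asymptotic decoupling estimate
\[ \langle A B \, v_k, \, v_k \rangle \; \longrightarrow \; \phi(A) \, \psi(B), \qquad A \in \ghani_q^L, \ B \in \ghani_q^R. \]
By density it suffices to treat words $A, B$ of bounded length in the generators and their adjoints. For such words one must control three effects, all governed by $|q| < 1$: first, $B v_k$ equals $\xi_k \otimes \beta_k \otimes (B \eta_k)$ up to an error damped by a power of $|q|^{s_k}$ coming from $(R_i^q)^*$ reaching past the buffer; second, $A$ then acts only on the left block up to a similar error; and third, in the $q$-inner product every permutation in the defining sum that mixes the left block with the right block carries at least $s_k$ inversions and so is suppressed by $|q|^{s_k}$, so that the surviving contribution factorises as $\langle A \xi_k, \xi_k \rangle \langle B \eta_k, \eta_k \rangle = \omega_{\xi_k}(A)\,\omega_{\eta_k}(B)$. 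The delicate point is to choose the interpolating buffer $\beta_k$ so that the residual block–buffer boundary interactions (which a priori are only of order $|q|$) are also driven to zero, and to make all tail and cross-term estimates uniform; the hypothesis $|q| < 1$ is precisely what renders every such term geometrically small, and organizing this bookkeeping rigorously is where the real work lies.
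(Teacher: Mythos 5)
Your reduction of injectivity to the existence of product states is correct and standard: Corollary \ref{Commut} makes the map a $*$-homomorphism, and if for every pair of states $(\phi,\psi)$ on $(\oni_q^L,\oni_q^R)$ (or even just a separating family) there is a state $\omega$ on $\oni_q^{L,R}$ with $\omega(xy)=\phi(x)\psi(y)$, then the Hahn--Banach/Jordan-decomposition argument you give does force every $a_i$ to vanish. The gap is that the product states are never actually constructed: the ``asymptotic decoupling estimate'' is the entire content of the lemma, and you leave it unproven while your stated reason for believing it --- that $|q|<1$ ``renders every such term geometrically small'' --- is false for exactly the terms you flag. In the $q$-inner product, a permutation that exchanges the last letter of one block with the first letter of the adjacent block carries a \emph{single} inversion, so the coupling between the $\xi_k$-block and the buffer (and between the buffer and the $\eta_k$-block) is of order $|q|$ no matter how long the buffer $\beta_k$ is; lengthening the buffer only suppresses the $\xi_k$--$\eta_k$ couplings. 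One sees this already with $A=B=1$: $\lVert \xi\otimes\beta\otimes\eta\rVert_q^2$ does not factorize, the correction being comparable to $q\sum_{i,i'}\langle R^q_{i'}(R^q_i)^*\xi,\xi\rangle\,\langle L^q_i(L^q_{i'})^*\beta,\beta\rangle$, which is genuinely nonvanishing for generic $\xi_k$ coming from Glimm's lemma. Whether these $O(|q|)$ boundary couplings cancel in the \emph{normalized} vector state $\langle\,\cdot\,v_k,v_k\rangle/\lVert v_k\rVert_q^2$ is precisely the hard analytic question, and your sketch offers no mechanism for such a cancellation; so this is not bookkeeping but a missing proof, arguably as hard as the lemma itself.

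For comparison, the paper's proof sidesteps all Fock-space estimates. It uses the Kennedy--Nica embedding (stated earlier in the paper) $\iota:\oni_q^{L,R}\hookrightarrow\oni_0^{L,R}$, implemented by a unitary $\mathcal{F}^q\to\mathcal{F}^0$, which satisfies $\iota(\oni_q^L)\subset\oni_0^L\otimes 1$ and $\iota(\oni_q^R)\subset 1\otimes\oni_0^R$; in the free case one knows $\oni_0^{L,R}\simeq\oni_0^L\otimes\oni_0^R$ (nuclearity and simplicity of $\mathcal{O}_n$), so if $\sum_i l_ir_i=0$ with the $l_i$ linearly independent, then $\sum_i\iota(l_i)\otimes\iota(r_i)=0$ in an honest tensor product, whence all $\iota(r_i)=0$ and all $r_i=0$. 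If you want to keep your product-state scheme, the cheapest repair is the same observation: pull back product states of $\mathcal{O}_n\otimes\mathcal{O}_n$ along $\iota$ to get the states $\omega$ you need, rather than building them by hand on $\mathcal{F}^q$.
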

\begin{proof}
The inclusion is a $*$-homomorphism because $\oni_q^L \hookrightarrow \oni_q^{L, R}$ commutes with $\oni_q^R \hookrightarrow \oni_q^{L, R}$ by Lemma \ref{Commut}.

In order to prove that the inclusion is injective, consider embedding \[ \iota : \oni_q^{L,R} \hookrightarrow \oni_0^{L, R} \subset \mathbb{B}(\mathcal{F}^0) / \mathbb{K}(\mathcal{F}^0). \]
Notice that $\oni_0^{L, R} \simeq \oni_0^L \otimes \oni_0^R$ and the inclusion $\iota$ has property $\iota(\oni_q^L) \subset \oni_0^L \otimes 1$ and $\iota(\oni_q^R) \subset 1 \otimes \oni_0^R$.

Every element of $\oni_q^L \odot \oni_q^R$ can be written as $\sum_{i = 1}^d l_i \otimes r_i$ with $l_1, \ldots, l_d$ linearly independent. Assume that $\sum_{i = 1}^d l_i r_i = 0 \in \oni_q^{L, R}$. Then 
\[ \iota(\sum_{i = 1}^d l_i r_i) = \sum_{i = 1}^d \iota(l_i) \otimes \iota(r_i) = 0 \in \oni_0^L \otimes \oni_0^R. \]
Since $\iota(l_1), \ldots, \iota(l_d)$ are also linearly independent, we conclude that $\iota(r_1) = \ldots = \iota(r_d) = 0$. By injectivity of $\iota$ we conclude $r_1 = \ldots = r_d = 0$ and $\sum_{i = 1}^d l_i \otimes r_i = 0$.  
\end{proof}

\begin{lemma}\label{TensorIso}
For some $C^*$-completion $\alpha$ on $\oni^L_q \odot \oni^R_q$,
\[ (\oni^{L,R}_q, \mathsf{Ad}(J^q)) \simeq^{\mathbb{Z}/2\mathbb{Z}} (\oni_q^L \otimes_\alpha \oni_q^R, \sigma \circ (\mathsf{Ad}(J^q) \otimes_\alpha \mathsf{Ad}(J^q))) \simeq^{\mathbb{Z}/2\mathbb{Z}} (\oni_q \otimes_\alpha \oni_q, \sigma). \]
\end{lemma}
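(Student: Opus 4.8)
The plan is to construct both $\mathbb{Z}/2\mathbb{Z}$-equivariant isomorphisms explicitly, carefully tracking where the flip sends each factor. Write $\tau := \mathsf{Ad}(J^q)$. Since $J^q$ is a self-adjoint unitary with $(J^q)^2 = \id$ and $J^q L_i^q J^q = R_i^q$, the map $\tau$ is an order-two automorphism of $\ghani_q^{L,R}$ interchanging $L_i^q \leftrightarrow R_i^q$; being unitary conjugation it preserves $\mathbb{K}(\mathcal{F}^q)$, hence descends to an order-two automorphism of $\oni_q^{L,R}$ whose restriction gives mutually inverse $*$-isomorphisms $\tau : \oni_q^L \to \oni_q^R$ and $\tau : \oni_q^R \to \oni_q^L$ with $\tau^2 = \id$. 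I take $\alpha$ to be the $C^*$-norm on $\oni_q^L \odot \oni_q^R$ inherited from the injective $*$-homomorphism $l \otimes r \mapsto lr$ of Lemma \ref{AlgInj}. Because $\oni_q^{L,R}$ is generated by the images of the $L_i^q$ and $R_i^q$, and $\oni_q^L$ commutes with $\oni_q^R$ by Corollary \ref{Commut}, every word in these generators rearranges into a finite sum $\sum_j l_j r_j$; thus the image of $\oni_q^L \odot \oni_q^R$ is dense and the completion $\Xi : \oni_q^L \otimes_\alpha \oni_q^R \to \oni_q^{L,R}$, $a \otimes b \mapsto ab$, is a $*$-isomorphism, which is the underlying map of the first claimed equivalence.

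For the first equivariance I must check $\mathsf{Ad}(J^q) \circ \Xi = \Xi \circ \big(\sigma \circ (\tau \otimes \tau)\big)$. On an elementary tensor $a \otimes b$ with $a \in \oni_q^L$, $b \in \oni_q^R$,
\[ \mathsf{Ad}(J^q)(\Xi(a \otimes b)) = \tau(a)\,\tau(b), \qquad \tau(a) \in \oni_q^R,\ \tau(b) \in \oni_q^L, \]
and, since left and right factors commute by Corollary \ref{Commut}, this equals $\tau(b)\,\tau(a) = \Xi(\tau(b) \otimes \tau(a)) = \Xi\big((\sigma \circ (\tau \otimes \tau))(a \otimes b)\big)$. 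As such elements are total, $\Xi$ is $\mathbb{Z}/2\mathbb{Z}$-equivariant; note that it is exactly the commutativity of the two subalgebras that turns the bare automorphism $\mathsf{Ad}(J^q)$ into the flip $\sigma$ twisted by $\tau \otimes \tau$.

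For the second equivariance I set $\oni_q := \oni_q^L$ and use $\Theta := \id_{\oni_q^L} \otimes \tau : \oni_q^L \otimes_\alpha \oni_q^R \to \oni_q \otimes_\alpha \oni_q$, transporting $\alpha$ to a $C^*$-norm on $\oni_q \odot \oni_q$ along this algebraic $*$-isomorphism, so that $\Theta$ is isometric by construction (this is the precise meaning of ``some $C^*$-completion $\alpha$''). On $a \otimes b$ one computes
\[ \Theta\big((\sigma \circ (\tau \otimes \tau))(a \otimes b)\big) = \Theta(\tau(b) \otimes \tau(a)) = \tau(b) \otimes \tau^2(a) = \tau(b) \otimes a, \]
while $\sigma(\Theta(a \otimes b)) = \sigma(a \otimes \tau(b)) = \tau(b) \otimes a$, using $\tau^2 = \id$. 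Hence $\Theta \circ (\sigma \circ (\tau \otimes \tau)) = \sigma \circ \Theta$, yielding the second $\mathbb{Z}/2\mathbb{Z}$-equivariant isomorphism.

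It remains to record the routine checks: each of $\mathsf{Ad}(J^q)$, $\sigma$, and $\sigma \circ (\tau \otimes \tau)$ squares to the identity (the last because $\tau^2 = \id$), so all three really are actions of $\mathbb{Z}/2\mathbb{Z}$, and every map above extends to the completions because $\Xi$ and $\Theta$ are assembled from $*$-homomorphisms and a $*$-isomorphism. I expect no genuine obstacle: the analytic input (injectivity of the tensor embedding and commutativity modulo compacts) is already supplied by Lemma \ref{AlgInj} and Corollary \ref{Commut}, so the only delicate point is the bookkeeping above, namely keeping straight which copy, $\oni_q^L$ or $\oni_q^R$, each application of $\tau$ lands in.
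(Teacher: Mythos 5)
Your proposal is correct and follows essentially the same route as the paper: you define $\alpha$ by declaring the embedding $l \otimes r \mapsto lr$ of Lemma \ref{AlgInj} to be isometric, use commutativity from Corollary \ref{Commut} both for density of $\sum_i l_i r_i$ in $\oni_q^{L,R}$ and for the computation identifying $\mathsf{Ad}(J^q)$ with $\sigma \circ (\mathsf{Ad}(J^q) \otimes_\alpha \mathsf{Ad}(J^q))$, exactly as in the paper's proof. The only difference is that you also spell out the second equivalence explicitly via $\Theta = \id \otimes \mathsf{Ad}(J^q)$ and the transported norm, a step the paper's proof leaves implicit; this is a useful completion of detail rather than a different approach.
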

\begin{proof}
Make $\oni_q^L \odot \oni_q^R$ into a normed space by postulating that the injection $\iota$ from Lemma \ref{AlgInj} is an isometry. This norm is a $C^*$-norm on a tensor product, thus completion of $\oni_q^L \odot \oni_q^R$ makes it into a $C^*$-algebra $\oni_q^L \otimes_\alpha \oni_q^R$. Since expressions of the form $\sum_{i = 1}^d l_i r_i$ are dense in $\oni_q^{L, R}$, $\iota : \oni_q^L \otimes_\alpha \oni_q^R \rightarrow \oni_q^{L, R}$ becomes surjective. Extensions of isometries to a completion preserve injectivity. Moreover, 
\[ \iota(\sigma(J^q l J^q \otimes J^q r J^q)) = \iota(J^q r J^q \otimes J^q l J^q) = J^q rl J^q = J^q lr J^q = \mathsf{Ad}(J^q)(\iota(l \otimes r)). \]
\end{proof}

\begin{definition}
Let $x \in \mathbb{B}(\mathcal{F}^q)$. Denote $1 \otimes x, x \otimes 1 \in \mathbb{B}(\mathcal{F}^q)$ as follows:
\[ (1 \otimes x)(\Omega) = 0, \ (x \otimes 1)(\Omega) = 0, \]
\[ (1 \otimes x)(\xi) = \xi \otimes x(\Omega), \ (x \otimes 1)(\xi) = x(\Omega) \otimes \xi, \]
\[ (1 \otimes x)(\xi_1 \otimes \ldots \otimes \xi_{n+1}) = \xi_1 \otimes x(\xi_2 \otimes \ldots \otimes x_{n+1}), \]
\[ (x \otimes 1)(\xi_1 \otimes \ldots \otimes \xi_{n+1}) = x(\xi_1 \otimes \ldots \otimes \xi_n) \otimes \xi_{n+1}. \]
\end{definition}

In what follows we will use such notation:
\begin{itemize}
    \item $\pi : \mathbb{B}(\mathcal{F}^q) \rightarrow \mathbb{B}(\mathcal{F}^q) / \mathbb{K}(\mathcal{F}^q)$.
    \item $J_k^q := 1^{\otimes k} \otimes J^q \otimes 1^{\otimes k} \in \mathbb{B}(\mathcal{F}^q)$.
    \item $U_k := J^q J_k^q = J_k^q J^q  \in \mathbb{B}(\mathcal{F}^q)$.
    \item $U_k = V_k |U_k| = |U_k|^{-1} V_k^*$ - polar decomposition.
    \item $j_k^q = \pi(J_k^q) \in \mathbb{B}(\mathcal{F}^q) / \mathbb{K}(\mathcal{F}^q)$.
    \item $u_k := \pi(U_k) \in \mathbb{B}(\mathcal{F}^q) / \mathbb{K}(\mathcal{F}^q)$.
    \item $v_k := \pi(V_k) \in \mathbb{B}(\mathcal{F}^q) / \mathbb{K}(\mathcal{F}^q)$.
\end{itemize}

\begin{lemma}
Let $x \in \mathbb{B}(\mathcal{F}^q)$. Then
\begin{itemize}
    \item $(1 \otimes x) L_i^q = L_i^q x$,
    \item $(x \otimes 1) R_i^q = R_i^q x$.
\end{itemize}
\end{lemma}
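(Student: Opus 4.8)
The plan is to prove both identities by a direct verification on the elementary tensors $\xi_1 \otimes \cdots \otimes \xi_k$, together with the vacuum $\Omega$, since these span a dense subspace of $\mathcal{F}^q$. As $L_i^q$, $R_i^q$ and the operators $1 \otimes x$, $x \otimes 1$ are all bounded, it suffices to check that the two sides of each operator identity agree on this total set, and then to extend by linearity and continuity. So the whole argument is a bookkeeping check of how the extra tensor slot created by $L_i^q$ (respectively $R_i^q$) interacts with the ``protected'' slot of $1 \otimes x$ (respectively $x \otimes 1$).

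For the first identity I would fix an elementary tensor $\eta = \xi_1 \otimes \cdots \otimes \xi_k$ of degree $k \geq 1$. Applying the creation operator gives $L_i^q \eta = e_i \otimes \xi_1 \otimes \cdots \otimes \xi_k$, a tensor of degree $k+1$ whose first slot is $e_i$. By the defining clause for $1 \otimes x$ on tensors of degree at least two, the $e_i$ in the first slot is left untouched while $x$ acts on the remaining block $\xi_1 \otimes \cdots \otimes \xi_k = \eta$, so $(1 \otimes x) L_i^q \eta = e_i \otimes x(\eta)$. On the other hand $L_i^q x \eta = e_i \otimes x(\eta)$ straight from the definition of $L_i^q$, and the two sides coincide. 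The vacuum case $\eta = \Omega$ is handled by the degree-one clause: $L_i^q \Omega = e_i \in \mathcal{F}_1^q$ and $(1 \otimes x)(e_i) = e_i \otimes x(\Omega) = L_i^q x \Omega$, so the cases $k \geq 1$ and $k = 0$ exactly exhaust the two defining clauses. The identity $(x \otimes 1) R_i^q = R_i^q x$ then follows by the mirror-image computation, now adjoining $e_i$ in the last slot: $R_i^q \eta = \xi_1 \otimes \cdots \otimes \xi_k \otimes e_i$, the operator $x \otimes 1$ fixes the last slot and acts by $x$ on the initial block $\eta$, yielding $x(\eta) \otimes e_i = R_i^q x \eta$, with the vacuum again treated by the degree-one clause.

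I do not expect a genuine obstacle; the only points requiring care are bookkeeping ones. First, a general vector $x(\eta)$ is inhomogeneous, so the expression $e_i \otimes x(\eta)$ must be read through linearity over the homogeneous components $(x(\eta))_m \in \mathcal{F}_m^q$ and the defining formulas applied componentwise, which is legitimate because those formulas are linear in the block on which $x$ acts. Second, one must keep the degree shifts aligned, which is precisely why the vacuum and degree-one vectors, where $1 \otimes x$ and $x \otimes 1$ are specified by separate clauses, are treated explicitly rather than folded into the generic case. Once this homogeneous-degree bookkeeping is in place, both identities reduce to the single observation that $L_i^q$ (resp.\ $R_i^q$) adjoins $e_i$ in exactly the slot that $1 \otimes x$ (resp.\ $x \otimes 1$) is designed to leave fixed, so the action of $x$ matches on the two sides slot-by-slot.
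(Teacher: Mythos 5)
Your proof is correct and follows essentially the same route as the paper: both verify the identities by applying each side to a vector and invoking the defining clauses of $1 \otimes x$ and $x \otimes 1$, with the left-hand computation $(1 \otimes x)(e_i \otimes \xi) = e_i \otimes x(\xi) = L_i^q x(\xi)$ and its mirror image for $R_i^q$. The paper simply writes this as a one-line computation for an arbitrary $\xi$, whereas you spell out the vacuum and degree bookkeeping explicitly; the content is identical.
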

\begin{proof}
    \[ (1 \otimes x)L_i^q(\xi) = (1 \otimes x)(e_i \otimes \xi) = e_i \otimes x(\xi) = L_i^q (x(\xi)). \]
    \[ (x \otimes 1)R_i^q(\xi) = (x \otimes 1)(\xi \otimes e_i) = x(\xi) \otimes e_i = R_i^q (x(\xi)). \]
\end{proof}

\begin{lemma}\label{tensors}
If $x \in \ghani^{L,R}_q$ then $1 \otimes x \in \ghani^{L,R}_q$ and $x \otimes 1 \in \ghani^{L, R}_q$.

If $x \in \ghani_q^L$ then $1 \otimes x \in \ghani_q^L$.

If $x \in \ghani_q^R$ then $x \otimes 1 \in \ghani_q^R$.
\end{lemma}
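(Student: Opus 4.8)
The plan is to avoid treating $x \mapsto 1 \otimes x$ as a genuine tensor ampliation. Because the $q$-deformed inner product on $\mathcal{F}^q$ is not the tensor product of the inner products on the factors, the assignment $x \mapsto 1 \otimes x$ is \emph{not} manifestly a $*$-homomorphism, so one cannot simply check the claim on generators and invoke multiplicativity. Instead I would produce an explicit algebraic formula expressing $1 \otimes x$ (and $x \otimes 1$) as a product of elements that visibly lie in the relevant algebra, built from $x$, the creation operators, the particle number operator, and $P_\Omega$.

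Concretely, I would start from the preceding lemma, which gives $(1 \otimes x) L_i^q = L_i^q x$ for every $x \in \mathbb{B}(\mathcal{F}^q)$. Right-multiplying by $(L_i^q)^*$ and summing over $i$ yields
\[ (1 \otimes x)\, \rho_L = \sum_{i=1}^n L_i^q\, x\, (L_i^q)^*, \]
since $\rho_L = \sum_i L_i^q (L_i^q)^*$. As $P_\Omega$ has range $\mathbb{C}\Omega$ and $(1 \otimes x)\Omega = 0$, we also have $(1 \otimes x) P_\Omega = 0$, and therefore
\[ (1 \otimes x)(\rho_L + P_\Omega) = \sum_{i=1}^n L_i^q\, x\, (L_i^q)^*. \]

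Next I would show that $\rho_L + P_\Omega$ is invertible with inverse in the algebra. By the spectral estimate (\ref{rho_estimate1}) one has $\rho_L \geq C_1 > 0$ on each $\mathcal{F}^q_m$ with $m \geq 1$, while $\rho_L$ vanishes and $P_\Omega$ acts as the identity on $\mathcal{F}^q_0 = \mathbb{C}\Omega$; hence $\rho_L + P_\Omega \geq \min(C_1, 1) > 0$ and is invertible in $\mathbb{B}(\mathcal{F}^q)$. Since $\ghani^L_q$ is unital (e.g. $1 = (L_i^q)^* L_i^q - q\, L_i^q (L_i^q)^*$) and contains both $\rho_L$ and $P_\Omega$, continuous functional calculus places $(\rho_L + P_\Omega)^{-1}$ in $\ghani^L_q$. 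This gives the key identity
\[ 1 \otimes x = \Big( \sum_{i=1}^n L_i^q\, x\, (L_i^q)^* \Big)(\rho_L + P_\Omega)^{-1}, \]
together with its mirror version $x \otimes 1 = \big(\sum_i R_i^q\, x\, (R_i^q)^*\big)(\rho_R + P_\Omega)^{-1}$, obtained identically from $(x \otimes 1) R_i^q = R_i^q x$ (or by conjugating with $J^q$).

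Finally I would read off the three assertions. If $x \in \ghani^{L,R}_q$ then every factor on the right of each identity lies in $\ghani^{L,R}_q$ (using $P_\Omega, \rho_L, \rho_R \in \ghani^{L}_q \cup \ghani^R_q \subseteq \ghani^{L,R}_q$), so $1 \otimes x, x \otimes 1 \in \ghani^{L,R}_q$; if $x \in \ghani^L_q$ then $L_i^q, (L_i^q)^*, \rho_L, P_\Omega \in \ghani^L_q$, giving $1 \otimes x \in \ghani^L_q$; and symmetrically $x \in \ghani^R_q$ gives $x \otimes 1 \in \ghani^R_q$. The main obstacle is the conceptual one of recognizing that $1 \otimes x$ should be captured by this $\rho_L$-corrected formula rather than by a tensor-ampliation argument; once the identity is in hand the rest is routine, the single substantive input being the invertibility of $\rho_L + P_\Omega$ inside the algebra, which rests on the earlier spectral estimates and on $P_\Omega$ belonging to $\ghani^L_q$ and $\ghani^R_q$.
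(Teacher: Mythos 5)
Your proposal is correct and takes essentially the same route as the paper: both rest on the identity $(1 \otimes x)\rho_L = \sum_{i=1}^n L_i^q\, x\, (L_i^q)^*$ coming from the preceding lemma, followed by inverting $\rho_L$ away from the vacuum --- the paper via the pseudo-inverse $\rho_L^+$ satisfying $\rho_L \rho_L^+ = 1 - P_\Omega$, you via $(\rho_L + P_\Omega)^{-1}$, which is the same device. If anything, your write-up is slightly more careful, since you justify explicitly (via the spectral estimates and unitality of $\ghani_q^L$) why this inverse lies in the algebra, a point the paper leaves implicit.
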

\begin{proof}
Let $\rho_L^+$, $\rho_R^+$ be inverses of $\rho_L, \rho_R$ outside of $\mathcal{F}^q_0$. In other words, $\rho_L \rho_L^+ = \rho_L^+ \rho_L = \rho_R^+ \rho_R = \rho_R \rho_R^+ = 1 - P_\Omega$.
\[ \sum_{i = 1}^n L_i^q x (L_i^q)^* = \sum_{i = 1}^n (1 \otimes x) L_i^q (L_i^q)^* = (1 \otimes x) \rho_L, \]
\[ (1 \otimes x) = (1 \otimes x) (1 - P_\Omega) = (1 \otimes x) \rho_L \rho_L^+ = \sum_{i = 1}^n L_i^q x (L_i^q)^* \rho_L^+ \in \ghani^{L,R}_q. \]
\[ \sum_{i = 1}^n R_i^q x (R_i^q)^* = \sum_{i = 1}^n (x \otimes 1) R_i^q (R_i^q)^* = (x \otimes 1) \rho_R, \]
\[ (x \otimes 1) = (x \otimes 1) (1 - P_\Omega) = (x \otimes 1) \rho_R \rho_R^+ = \sum_{i = 1}^n R_i^q x (R_i^q)^* \rho_R^+ \in \ghani^{L,R}_q. \]
\end{proof}

\begin{lemma}
$u_k \in \oni^{L, R}_q$.
\end{lemma}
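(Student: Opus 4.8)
The plan is to reduce the statement to the single assertion $U_1 \in \ghani_q^{L,R}$ and then to prove that base case by an explicit computation modulo compacts. First observe that, since $\mathbb{K}(\mathcal{F}^q) \subseteq \ghani_q^{L,R}$ by Proposition \ref{Invariance}, the preimage $\pi^{-1}(\oni_q^{L,R})$ equals $\ghani_q^{L,R} + \mathbb{K}(\mathcal{F}^q) = \ghani_q^{L,R}$; hence $u_k = \pi(U_k)$ lies in $\oni_q^{L,R}$ if and only if $U_k \in \ghani_q^{L,R}$. So it suffices to prove $U_k \in \ghani_q^{L,R}$.

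Next I would set up the recursion
\[ U_k = U_1\,(1 \otimes U_{k-1} \otimes 1). \]
This follows from the two algebraic facts $J_k^q = 1 \otimes J_{k-1}^q \otimes 1$ and $(J_1^q)^2 = 1$ (the latter because $J^q$ is an involution), together with the multiplicativity of $x \mapsto 1 \otimes x \otimes 1$: indeed $1 \otimes U_{k-1} \otimes 1 = (1 \otimes J^q \otimes 1)(1 \otimes J_{k-1}^q \otimes 1) = J_1^q J_k^q$, so that $U_1(1 \otimes U_{k-1} \otimes 1) = J^q (J_1^q)^2 J_k^q = J^q J_k^q = U_k$. Now I induct on $k$. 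The base value $U_0 = (J^q)^2 = 1 \in \ghani_q^{L,R}$ is immediate, and if $U_{k-1} \in \ghani_q^{L,R}$ then two applications of Lemma \ref{tensors} give $1 \otimes U_{k-1} \otimes 1 \in \ghani_q^{L,R}$; combined with $U_1 \in \ghani_q^{L,R}$ this yields $U_k \in \ghani_q^{L,R}$. Thus the whole lemma collapses to the base case $U_1 \in \ghani_q^{L,R}$.

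It remains to prove $U_1 \in \ghani_q^{L,R}$. The operator $U_1$ interchanges the first and last tensor legs and fixes the middle ones, so on the free Fock space ($q = 0$) one checks directly that $U_1$ agrees with $\sum_{i,j=1}^n L_i^0 R_j^0 (R_i^0)^* (L_j^0)^*$ off the finite-dimensional subspace $\mathcal{F}_0 \oplus \mathcal{F}_1$. For general $q$ the bare product no longer equals $U_1$ — e.g. for $n=1$ it acts on $\mathcal{F}_m^q$ as the scalar $[m]_q[m-1]_q$ rather than as $1$ — so I would insert a degree-zero correction: exhibit a degree-zero element $D \in \ghani_q^{L,R}$, built from the resolvents of $\rho_L$ and $\rho_R$ (which are bounded thanks to \eqref{rho_estimate1}--\eqref{rho_estimate2}), such that
\[ \Phi := \sum_{i,j=1}^n L_i^q R_j^q\, D\, (R_i^q)^* (L_j^q)^* \in \ghani_q^{L,R} \]
coincides with $U_1$ modulo $\mathbb{K}(\mathcal{F}^q)$; since $\mathbb{K}(\mathcal{F}^q) \subseteq \ghani_q^{L,R}$, this gives $U_1 \in \ghani_q^{L,R}$.

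The one genuinely analytic point, and the main obstacle, is the compactness of $U_1 - \Phi$. Since both operators commute with the grading, $U_1 - \Phi$ is block diagonal with finite-dimensional blocks, so it is compact exactly when $\norm{(U_1 - \Phi)\vert_{\mathcal{F}_m^q}} \to 0$. The difficulty is that the tails of the $q$-annihilation operators contribute terms of size $O(|q|)$ that are uniform in $m$ and do not decay on their own; compactness must instead come from a cancellation between these tails and the degree-zero normalization $D$. I would organize this by using the asymptotic commutation $[(L_i^q)^*, R_j^q]\vert_{\mathcal{F}_m^q} = \delta_{ij} q^m \id$ of \cite{Shlyakhtenko} — equivalently $[\ghani_q^L, \ghani_q^R] \subseteq \mathbb{K}(\mathcal{F}^q)$, which underlies Corollary \ref{Commut} — to commute the left annihilations past the right creations at the cost of compact $O(|q|^m)$ errors, thereby decoupling the left and right actions modulo $\mathbb{K}(\mathcal{F}^q)$; the uniform bounds \eqref{rho_estimate1}--\eqref{rho_estimate2} then make $D$ a bounded element of the algebra and pin the block-diagonal symbol of $\Phi$ to that of $U_1$ up to an error vanishing as $m \to \infty$. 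Establishing this norm-vanishing on the blocks is the crux; the reduction preceding it is purely formal.
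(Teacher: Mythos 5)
Your reduction of the lemma to the single claim $U_1 \in \ghani_q^{L,R}$ is essentially sound, and it runs parallel to the paper's own argument: the paper performs the same induction, but on $J_k^q$ rather than $U_k$, maintaining the statement $J_k^q = J^q A_k$ with $A_k \in \ghani_q^{L,R}$ via the recursion $A_k = \sum_{i,j} R_i^q L_j^q A_{k-1} (R_j^q)^* \rho_R^+ (L_i^q)^* \rho_L^+$, which is exactly your identity $U_k = U_1(1 \otimes U_{k-1} \otimes 1)$ in disguise. (One small inaccuracy: under the paper's conventions $1 \otimes x$ and $x \otimes 1$ annihilate $\Omega$, so $(J_1^q)^2 = 1 - P_{\mathcal{F}^q_0 \oplus \mathcal{F}^q_1}$ rather than $1$; your recursion therefore holds only up to a finite-rank error, which is harmless since $\mathbb{K}(\mathcal{F}^q) \subset \ghani_q^{L,R}$ by Proposition \ref{Invariance}.)

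The genuine gap is the base case, and you say so yourself: you never exhibit the correction $D$, and you never prove that $U_1 - \Phi$ is compact; what you give for "the crux" is a plan for an analytic argument, not an argument, and it is not even clear that a single middle insertion $D$ can work for $n \geq 2$, where the annihilation tails depend on the word and not just on the degree. The decisive point you missed is that no compactness estimate is needed at all: the identities behind Lemma \ref{tensors} are \emph{exact} operator identities valid for every bounded $x$ (membership of $x$ in the algebra is used only to conclude membership of $1 \otimes x$), because $(1 \otimes x)\rho_L\rho_L^+ = (1\otimes x)(1 - P_\Omega) = 1 \otimes x$ and likewise on the right. Applying them to $x = J^q$ gives, exactly and not merely modulo compacts,
\[ J^q \otimes 1 = \sum_{j=1}^n R_j^q J^q (R_j^q)^* \rho_R^+, \qquad J_1^q = 1 \otimes (J^q \otimes 1) = \sum_{i,j=1}^n L_i^q R_j^q J^q (R_j^q)^* \rho_R^+ (L_i^q)^* \rho_L^+, \]
and commuting $J^q$ to the left via $J^q L_i^q J^q = R_i^q$, $J^q R_j^q J^q = L_j^q$ yields
\[ U_1 = J^q J_1^q = \sum_{i,j=1}^n R_i^q L_j^q (R_j^q)^* \rho_R^+ (L_i^q)^* \rho_L^+ \in \ghani_q^{L,R}, \]
where $\rho_L^+, \rho_R^+$ belong to the algebra because $0$ is isolated in the spectra of $\rho_L, \rho_R$ by \eqref{rho_estimate1}--\eqref{rho_estimate2}. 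In other words, the resolvent factors, interleaved with the creation and annihilation operators rather than inserted as one block in the middle, are precisely the "degree-zero correction" you were searching for, and with them no error term --- compact or otherwise --- ever appears; the Shlyakhtenko commutator estimate is not needed. As written, your proof is incomplete exactly at the step you identify as the main obstacle.
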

\begin{proof}
We prove that $J_k^q = J^q A_k$ for some $A_k \in \ghani^{L, R}_q$:
\begin{align*}
    J_k^q & = \sum_{i = 1}^n L_i^q (J_{k - 1}^q \otimes 1) (L_i^q)^* \rho_L^{+} = \\ & = \sum_{i = 1}^n \sum_{j = 1}^n L_i^q R_j^q J_{k - 1}^q (R_j^q)^* \rho_R^{+} (L_i^q)^* \rho_L^{+} = \\ & = \sum_{i = 1}^n \sum_{j = 1}^n L_i^q R_j^q J^q A_{k - 1} (R_j^q)^* \rho_R^+ (L_i^q)^* \rho_L^{+} = \\ & = J^q \sum_{i = 1}^n \sum_{j = 1}^n R_i^q L_j^q A_{k - 1} (R_j^q)^* \rho_R^{+} (L_i^q)^* \rho_L^{+}.
\end{align*}
Thus
\[ u_k = \pi(J^q J_k^q) = \pi(A_k) \in \oni_{L,R}^q. \]
\end{proof}

\begin{lemma}
Let $x \in \mathbb{B}(\mathcal{F}^q)$. Then for every $k$ one has \[ \lVert 1^{\otimes k} \otimes x \otimes 1^{\otimes k} \rVert < C_x. \]
\end{lemma}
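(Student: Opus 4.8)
\section*{Proof proposal}

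The plan is to reduce the uniform bound to a condition-number estimate for a single positive operator, and then to a comparison of $q$-Gram operators. I would work on the reducing subspace $\mathcal R_k := \bigoplus_{m\ge 2k}\mathcal F^q_m$, on which $1^{\otimes k}\otimes x\otimes 1^{\otimes k}$ is supported (it annihilates $\bigoplus_{m<2k}\mathcal F^q_m$). Writing $L_{\vec i}=L^q_{i_1}\cdots L^q_{i_k}$ and $R_{\vec j}=R^q_{j_1}\cdots R^q_{j_k}$ for multi-indices $\vec i,\vec j\in\{1,\dots,n\}^k$, the preceding lemma together with $L^q_iR^q_j=R^q_jL^q_i$ gives the intertwining relation
\[ (1^{\otimes k}\otimes x\otimes 1^{\otimes k})\,L_{\vec i}R_{\vec j}=L_{\vec i}R_{\vec j}\,x, \]
proved by iterating $(1\otimes y)L^q_i=L^q_i y$ and $(y\otimes 1)R^q_j=R^q_j y$. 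Multiplying on the right by $(L_{\vec i}R_{\vec j})^*$ and summing, with $D_k:=\sum_{\vec i,\vec j}L_{\vec i}R_{\vec j}R_{\vec j}^*L_{\vec i}^*$ and $N_k:=\sum_{\vec i,\vec j}L_{\vec i}R_{\vec j}\,x\,R_{\vec j}^*L_{\vec i}^*$, one gets the factorization $1^{\otimes k}\otimes x\otimes 1^{\otimes k}=N_kD_k^{-1}$ on $\mathcal R_k$, where $D_k$ is positive definite on $\mathcal R_k=\overline{\operatorname{ran}}\,D_k$ and $D_k^{-1}$ is its inverse there.

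Next I would control the norm purely by $D_k$. The map $\Phi_k(x):=D_k^{-1/2}N_kD_k^{-1/2}=\sum_{\vec i,\vec j}(D_k^{-1/2}L_{\vec i}R_{\vec j})\,x\,(D_k^{-1/2}L_{\vec i}R_{\vec j})^*$ is completely positive with $\Phi_k(1)=D_k^{-1/2}D_kD_k^{-1/2}=P_{\mathcal R_k}$, hence contractive. Since $1^{\otimes k}\otimes x\otimes 1^{\otimes k}=D_k^{1/2}\,\Phi_k(x)\,D_k^{-1/2}$ on $\mathcal R_k$,
\[ \lVert 1^{\otimes k}\otimes x\otimes 1^{\otimes k}\rVert\le \lVert D_k^{1/2}\rVert\,\lVert D_k^{-1/2}\rVert\,\lVert x\rVert=\sqrt{\operatorname{cond}(D_k)}\;\lVert x\rVert, \]
so it suffices to bound $\operatorname{cond}(D_k)$ uniformly in $k$.

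Third, I would compute $\operatorname{cond}(D_k)$ through the Gram operators $P_m$ defined by $\langle\xi,\eta\rangle_q=\langle P_m\xi,\eta\rangle_0$ on $(\mathbb C^n)^{\otimes m}$, where $\langle\cdot,\cdot\rangle_0$ is the undeformed inner product. The creation operators are $q$-independent as linear maps, while their $q$-adjoints are $P_\bullet^{-1}(\cdot)^{*_0}P_\bullet$; a direct calculation then gives, for $\xi\in\mathcal F^q_m$ and $\psi=P_m\xi$,
\[ \langle D_k\xi,\xi\rangle_q=\big\langle (1_k\otimes P_{m-2k}^{-1}\otimes 1_k)\,\psi,\psi\big\rangle_0,\qquad \lVert\xi\rVert_q^2=\langle P_m^{-1}\psi,\psi\rangle_0. \]
Since $D_k$ is block diagonal over $m$, $\operatorname{cond}(D_k)$ is exactly the spread, over all $m\ge 2k$, of the generalized spectrum of the pair $(1_k\otimes P_{m-2k}^{-1}\otimes 1_k,\,P_m^{-1})$; equivalently it is controlled by the two-sided comparison
\[ A_k\,(1_k\otimes P_{m-2k}\otimes 1_k)\le P_m\le B_k\,(1_k\otimes P_{m-2k}\otimes 1_k),\qquad m\ge 2k, \]
which yields $\operatorname{cond}(D_k)\le B_k/A_k$.

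The heart of the matter, and the step I expect to be the main obstacle, is to produce such constants with $B_k/A_k$ bounded independently of $k$. This is where $|q|<1$ enters decisively. Iterating the Bozejko–Speicher factorization of the Gram operator (each step inserting one strand through a positive ``$R$-operator'') and invoking their positivity estimate $P_m\ge\prod_{j=1}^{m-1}(1-|q|^j)\,1$, the individual per-strand distortions telescope into the convergent product $\prod_{j\ge 1}(1-|q|^j)$, giving $B_k/A_k\le\prod_{j\ge1}(1-|q|^j)^{-1}<\infty$. The single-mode case $n=1$ both confirms this mechanism and fixes the constant: there $P_m=[m]_q!$ is scalar and the comparison reads $[2k]_q!\le [m]_q!/[m-2k]_q!\le(1-|q|)^{-2k}$, with ratio exactly $\prod_{j=1}^{2k}(1-|q|^j)^{-1}$. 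With the Gram comparison established, $\operatorname{cond}(D_k)$ is uniformly bounded, and the estimate above holds with any $C_x$ slightly exceeding $\lVert x\rVert\,\prod_{j\ge1}(1-|q|^j)^{-1/2}$, independent of $k$.
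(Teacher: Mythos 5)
Your first three steps are correct and are genuinely different from the paper's argument: the intertwining relation, the factorization $1^{\otimes k}\otimes x\otimes 1^{\otimes k}=D_k^{1/2}\Phi_k(x)D_k^{-1/2}$ with $\Phi_k$ completely positive and $\Phi_k(1)=P_{\mathcal R_k}$, and the identification of $\operatorname{spec}\bigl(D_k\vert_{\mathcal F^q_m}\bigr)$ with the generalized spectrum of the pair $\bigl(P_m,\,1_k\otimes P_{m-2k}\otimes 1_k\bigr)$ are all sound. The gap is exactly where you flagged it, and it is fatal rather than technical: for $n\ge 2$ the telescoping mechanism you invoke does not exist. The single-strand pencil $(P_m,\,1\otimes P_{m-1})$ already has spectral spread bounded away from $1$ uniformly in $m$: writing $[m]_q=(1-q^m)/(1-q)$, the Rayleigh quotient $\langle P_m\psi,\psi\rangle_0/\langle(1\otimes P_{m-1})\psi,\psi\rangle_0$ equals $[m]_q$ at $\psi=e_1^{\otimes m}$ and equals $1$ at $\psi=e_1\otimes e_2^{\otimes(m-1)}$, so the per-strand distortion ratio is at least $\max\bigl([m]_q,[m]_q^{-1}\bigr)\to(1\mp|q|)^{-1}>1$ and the product of per-strand ratios over $2k$ strands diverges. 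Your $n=1$ check is precisely the misleading case: there each pencil lives on a one-dimensional space, the per-strand spread is exactly $1$, and all variation occurs across $m$ — that is the only reason it telescopes. (Note also that the Bozejko--Speicher estimate $P_m\ge\prod_j(1-|q|^j)$ is an absolute lower bound; it cannot be fed into a comparison relative to $1\otimes P_{m-1}$, whose norm itself grows like $(1-|q|)^{-(m-1)}$.) In fact your claimed constant is provably violated: testing the $k$-strand pencil with $\psi=e_1^{\otimes m}$ and with $\psi=e_2\otimes e_1^{\otimes(m-2)}\otimes e_2$ gives
\begin{equation*}
\operatorname{cond}(D_k)\;\ge\;\frac{[m]_q\,[m-1]_q}{1+q^{2m-3}}\;\xrightarrow[m\to\infty]{}\;\frac{1}{(1-q)^2},
\end{equation*}
which exceeds $\prod_{j\ge1}(1-q^j)^{-1}$ for small $q>0$ (e.g.\ at $q=0.1$: $1.23$ versus $1.12$).

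Worse, there is concrete evidence that no uniform bound on $\operatorname{cond}(D_k)$ holds at all, so the statement you reduced to is likely false, not merely unproven. For $n=2$, $0<q<1$, the antisymmetric vector $w=(e_1\otimes e_2-e_2\otimes e_1)/\sqrt2$ satisfies the exact identity
\begin{equation*}
\bigl\lVert w\otimes e_1^{\otimes M}\bigr\rVert_q^2=\tfrac{1}{2}\,[M]_q!\,\bigl((1-q^2)+(1-q)(1-q^{M+1})\bigr)\approx\tfrac{3}{2}(1-q)\,[M]_q!,
\end{equation*}
while the corresponding strand-stripped Gram form equals $[M]_q!$; stacking such antisymmetric pairs at the ends pushes the minimal pencil eigenvalue down geometrically while the maximal one (at $e_1^{\otimes m}$) grows like $(1-q)^{-2k}$, so $\operatorname{cond}(D_k)$ grows exponentially in $k$. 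The structural reason your route runs into this is that it would prove a statement uniform in $x$, namely $\lVert 1^{\otimes k}\otimes x\otimes 1^{\otimes k}\rVert\le C\lVert x\rVert$ for \emph{all} $x$ simultaneously — much stronger than the lemma, which only asks for a constant $C_x$ for each fixed $x$. The paper exploits exactly this weaker requirement with a soft argument: $1^{\otimes k}\otimes x\otimes 1^{\otimes k}\to 0$ in the strong operator topology, and the Banach--Steinhaus theorem then yields $\sup_k\lVert 1^{\otimes k}\otimes x\otimes 1^{\otimes k}\rVert<\infty$. Any repair of your approach would have to produce an estimate that sees the particular operator $x$, not only the $k$-dependent geometry encoded in $D_k$.
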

\begin{proof}
\[\lim_{k \to \infty}^{\text{SOT}} (1^{\otimes k} \otimes x \otimes 1^{\otimes k}) = 0, \]
thus by Banach-Steinhaus theorem $\sup_{k \in \mathbb{N}} \lVert 1^{\otimes k} \otimes x \otimes 1^{\otimes k} \rVert < C_x$.
\end{proof}

\begin{lemma}\label{GeneratorsGhani}
$C^*(1, L_i^q (L_j^q)^*, i, j = 1\ldots n) = (\ghani^L_q)^\mathbb{T}$.
\end{lemma}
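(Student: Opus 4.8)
The plan is to prove two inclusions, the inclusion $\subseteq$ being immediate and the reverse inclusion being the substance. Write $B = C^*(1, L_i^q(L_j^q)^*)$. For the easy direction, note that for $z \in \mathbb{T}$ one has $\gamma_z(L_i^q(L_j^q)^*) = z\bar z\, L_i^q(L_j^q)^* = L_i^q(L_j^q)^*$, so every generator of $B$ lies in the closed $*$-subalgebra $(\ghani^L_q)^\mathbb{T}$; hence $B \subseteq (\ghani^L_q)^\mathbb{T}$.

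For the reverse inclusion I would first identify $(\ghani^L_q)^\mathbb{T}$ concretely. Averaging the gauge action gives a faithful conditional expectation $E(x) = \int_\mathbb{T} \gamma_z(x)\, dz$ of $\ghani^L_q$ onto $(\ghani^L_q)^\mathbb{T}$. Using the defining relation $(L_i^q)^* L_j^q = \delta_{ij} + q L_j^q (L_i^q)^*$ repeatedly, every word in the $L_i^q$ and $(L_j^q)^*$ can be rewritten as a finite linear combination of Wick-ordered monomials $L_\mu^q (L_\nu^q)^* := L_{\mu_1}^q \cdots L_{\mu_p}^q (L_{\nu_1}^q)^* \cdots (L_{\nu_r}^q)^*$; such monomials therefore span a dense $*$-subalgebra of $\ghani^L_q$. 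Since $\gamma_z(L_\mu^q(L_\nu^q)^*) = z^{|\mu| - |\nu|} L_\mu^q(L_\nu^q)^*$, the expectation $E$ fixes the balanced monomials ($|\mu| = |\nu|$) and annihilates the rest, so $(\ghani^L_q)^\mathbb{T} = \overline{\mathrm{span}}\{ L_\mu^q(L_\nu^q)^* : |\mu| = |\nu| \}$.

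It then remains to show that every balanced monomial of degree $p$ lies in $B$, which I would do by (strong) induction on $p$. The cases $p = 0$ and $p = 1$ are the generators $1$ and $L_i^q(L_j^q)^*$. For the inductive step, Wick-order the product of $p$ generators $\prod_{m=1}^p L_{i_m}^q (L_{j_m}^q)^*$: commuting each annihilation operator to the right past the creation operators standing to its right, one obtains
\[ \prod_{m=1}^p L_{i_m}^q (L_{j_m}^q)^* = q^{\binom{p}{2}}\, L_{i_1}^q \cdots L_{i_p}^q (L_{j_1}^q)^* \cdots (L_{j_p}^q)^* + (\text{lower-degree terms}), \]
where every correction term arises from a contraction $\delta_{\cdot\cdot}$ that removes one creation and one annihilation operator simultaneously, and is hence again a balanced monomial, now of degree $< p$. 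By the induction hypothesis these corrections lie in $B$, and the product on the left lies in $B$ by construction; since $q \neq 0$ the coefficient $q^{\binom{p}{2}}$ is invertible, so solving for the top monomial places it in $B$ as well. As the indices are arbitrary, all balanced monomials of degree $p$ lie in $B$, completing the induction. Because these monomials densely span $(\ghani^L_q)^\mathbb{T}$ and $B$ is closed, we conclude $(\ghani^L_q)^\mathbb{T} \subseteq B$, giving equality.

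The main obstacle is the combinatorial bookkeeping in the Wick-ordering identity of the last step: one must confirm that the fully uncontracted term carries the nonzero coefficient $q^{\binom{p}{2}}$ and that every contraction preserves the balance between creation and annihilation operators, so that the induction genuinely closes. The essential use of $q \neq 0$ here is not an artifact of the argument; at $q = 0$ the products of generators collapse to a system of matrix units and $B$ is finite-dimensional, so the invertibility of $q^{\binom{p}{2}}$ is precisely what allows the degree to be raised.
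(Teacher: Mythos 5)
Your proof is correct and takes essentially the same route as the paper: both establish the nontrivial inclusion by induction on the degree of balanced Wick monomials, using the relation $(L_i^q)^*L_j^q = \delta_{ij} + qL_j^q(L_i^q)^*$ to extract the top-degree monomial from a product of degree-one generators after dividing by a power of $q$. The differences are in presentation rather than substance: the paper peels off one pair per induction step (its product of a degree-$k$ balanced element with one generator equals $q^{k}L_\mu^q(L_\sigma^q)^*$ plus balanced terms of degree $k$, so it implicitly divides by $q^{k}$), whereas you Wick-order a product of $p$ generators in one pass and divide by $q^{\binom{p}{2}}$; and you spell out the conditional-expectation argument identifying $(\ghani_q^L)^{\mathbb{T}}$ with the closed span of balanced monomials, a step the paper uses tacitly. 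One point in your write-up deserves emphasis: the restriction $q\neq 0$ is equally present, though unacknowledged, in the paper's proof, and your closing observation that the statement genuinely fails at $q=0$ is correct --- there $(s_is_j^*)(s_ks_l^*)=\delta_{jk}s_is_l^*$, so $C^*(1,s_is_j^*)=\mathbb{C}1+\mathrm{span}\{s_is_j^*\}$ has dimension $n^2+1$, while $(\ghani_0^L)^{\mathbb{T}}$ is the infinite-dimensional AF core of the Cuntz--Toeplitz algebra. Since the paper's blanket hypothesis is $|q|<1$, the lemma as stated there should exclude $q=0$; this is harmless downstream (the case $q=0$ is the reference case of the main theorem), but on this point your version is the more carefully stated of the two.
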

\begin{proof}
Proof is by induction. Let $|\mu| = |\sigma| = k + 1$. Then
\begin{align*}
    & L_\mu^q (L_\sigma^q)^* = L_{\mu_{1 \ldots k}}^q L_{\mu_{k+1}}^q (L_{\sigma_{1 \ldots k}}^q)^* (L_{\sigma_1}^q)^* \in \\ & \in (L_{\mu_{1 \ldots k}}^q (L_{\sigma_{1 \ldots k}}^q)^*) (L_{\mu_{k+1}}^q (L_{\sigma_1}^q)^*) + \text{span}\{ L_\alpha^q (L_\beta^q)^*, \ |\alpha| = |\beta| = k \}.
\end{align*}
\end{proof}

\begin{lemma}
For every $x \in (\oni^{L,R}_q)^\mathbb{T}$ one has $\lim_{k \to \infty} [j_k^q, x] = 0$.
\end{lemma}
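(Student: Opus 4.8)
The plan is to show that the set $\mathcal{S} = \{x \in (\oni_q^{L,R})^\mathbb{T} : \lim_{k \to \infty}[j_k^q, x] = 0\}$ exhausts $(\oni_q^{L,R})^\mathbb{T}$. First I would record that $\mathcal{S}$ is a norm-closed subalgebra containing the unit. The key ingredient is that the operators $J_k^q = 1^{\otimes k} \otimes J^q \otimes 1^{\otimes k}$ are uniformly bounded, $\sup_k \|J_k^q\| < \infty$, by the Banach--Steinhaus estimate established above; hence $\sup_k \|j_k^q\| < \infty$. Closedness then follows from an $\varepsilon/3$ argument (the maps $x \mapsto [j_k^q, x]$ are equibounded in $k$), and the subalgebra property from the Leibniz identity $[j_k^q, xy] = [j_k^q, x]y + x[j_k^q, y]$. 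Consequently it suffices to prove $x \in \mathcal{S}$ for $x$ ranging over a generating family.

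For the generating family I would use Lemma \ref{GeneratorsGhani} together with its right-handed analogue and the commutativity $[\oni_q^L, \oni_q^R] = \{0\}$ of Corollary \ref{Commut}: regrouping balanced words shows that $(\oni_q^{L,R})^\mathbb{T}$ is generated, as a $C^*$-algebra, by $1$ and the degree-zero monomials $L_i^q(L_j^q)^*$, $R_i^q(R_j^q)^*$, $L_i^q(R_j^q)^*$, $R_i^q(L_j^q)^*$. The symmetry $\mathsf{Ad}(J^q)$, which interchanges $L_i^q \leftrightarrow R_i^q$ and fixes $J_k^q$ (because $J^q J_k^q = J_k^q J^q$), reduces these four families to two, since $[J_k^q, R_i^q(R_j^q)^*] = J^q[J_k^q, L_i^q(L_j^q)^*]J^q$ and likewise $R_i^q(L_j^q)^*$ is carried to $L_i^q(R_j^q)^*$. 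Thus it remains to treat the two representatives $L_i^q(L_j^q)^*$ and $L_i^q(R_j^q)^*$.

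The core estimate is the following. Each monomial $x$ above preserves every finite-dimensional homogeneous component $\mathcal{F}_N^q$, so $\|\pi([J_k^q, x])\| = \limsup_{N \to \infty} \|[J_k^q, x]|_{\mathcal{F}_N^q}\|$, and it is enough to bound these block norms by a quantity that is $O(|q|^k)$ uniformly in $N$. I would combine two facts. First, the pushing relation $J_k^q L_i^q = L_i^q\,(1^{\otimes(k-1)} \otimes J^q \otimes 1^{\otimes k})$, coming from $(1 \otimes y)L_i^q = L_i^q y$. Second, the tail identity $(L_j^q)^* = (L_j^q)^*_{\le k} + q^k\bigl(1^{\otimes k} \otimes (L_j^q)^*\bigr)$, where $(L_j^q)^*_{\le k}$ annihilates only one of the first $k$ tensor legs. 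A direct check on basis words shows that the head commutes with the flip modulo finite rank, $J_k^q L_i^q(L_j^q)^*_{\le k} = L_i^q(L_j^q)^*_{\le k} J_k^q$ on $\bigoplus_{N \ge 2k}\mathcal{F}_N^q$: removing one of the first $k$ legs and prepending $e_i$ affects only the front block that $J_k^q$ leaves fixed, while the reversed middle block consists of the same original legs in either order. The tail is controlled in norm by $|q|^k$ times the uniformly bounded operators $1^{\otimes k} \otimes (L_j^q)^*$ and $J_k^q$, giving $\|[J_k^q, L_i^q(L_j^q)^*]|_{\mathcal{F}_N^q}\| \le c\,|q|^k$ uniformly in $N$, hence $\pi([J_k^q, L_i^q(L_j^q)^*]) \to 0$. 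For $L_i^q(R_j^q)^*$ I would run the same argument with the right-handed tail identity $(R_j^q)^* = (R_j^q)^*_{\ge N-k} + q^k\bigl((R_j^q)^* \otimes 1^{\otimes k}\bigr)$; here the creation $L_i^q$ sits in the fixed front block and the head of $(R_j^q)^*$ in the fixed back block, both disjoint from the reversed middle, so the head again commutes with $J_k^q$ modulo finite rank.

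The main obstacle is the core estimate of the third paragraph: proving the exact head-cancellation and the uniform $O(|q|^k)$ tail bound while respecting the non-tensorial $q$-deformed inner product, for which the homogeneous tensor words are not orthonormal and block norms cannot be read off combinatorially. The two structural identities — the tail factorisation $(L_j^q)^*_{>k} = q^k(1^{\otimes k} \otimes (L_j^q)^*)$ and the uniform operator bounds furnished by the Banach--Steinhaus lemma — are precisely what converts the heuristic that the flip has been pushed out to infinity, away from where the generators act, into a genuine norm estimate. Verifying that the mixed generator $L_i^q(R_j^q)^*$ behaves like the pure one, and that the boundary bookkeeping in the head-cancellation is exact rather than merely approximate, is where the care is needed.
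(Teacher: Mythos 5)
Your reduction framework (a norm-closed unital subalgebra $\mathcal{S}$ of elements with asymptotically vanishing commutators, equiboundedness of $x \mapsto [j_k^q,x]$ via the Banach--Steinhaus lemma, then a self-adjoint generating family) and your treatment of the pure generators $L_i^q(L_j^q)^*$ are essentially the paper's proof: the paper's explicit computation is exactly your head/tail split, with the head appearing as the common term $A_{i,j,k}$ in both orderings and the tails carrying the factor $q^k$ against uniformly bounded operators. Where you diverge is the reduction step: the paper asserts $(\oni_q^{L,R})^{\mathbb{T}} = C^*((\oni_q^L)^{\mathbb{T}},(\oni_q^R)^{\mathbb{T}})$ and, using Lemma \ref{GeneratorsGhani} and the symmetry $\mathsf{Ad}(j^q)$, treats only $\pi(L_i^q(L_j^q)^*)$, whereas you observe that the fixed-point algebra of the \emph{diagonal} gauge action also contains the mixed monomials $\pi(L_i^q(R_j^q)^*)$ and $\pi(R_i^q(L_j^q)^*)$, and you add them to your generating family.

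This is where your proof has a genuine gap: the claim that for $L_i^q(R_j^q)^*$ ``the head again commutes with $J_k^q$ modulo finite rank'' is false, and no estimate can repair it. The point is that $L_i^q$ prepends a leg, shifting every leg one position to the right. Paired with a \emph{left} annihilation (which removes a leg from the front block) the shift is compensated, and the window of reversed legs is the same for both orderings --- that is precisely why the head-cancellation works for $L_i^q(L_j^q)^*$. Paired with a \emph{right} annihilation (removing a leg from the back block) nothing compensates the shift at the front: on $\mathcal{F}_N^q$, the operator $J_k^q\,L_i^q(R_j^q)^*$ reverses input legs $k,\ldots,N-k-1$, while $L_i^q(R_j^q)^*\,J_k^q$ reverses input legs $k+1,\ldots,N-k$. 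Concretely, at $q=0$ (where the tail vanishes identically) take $w = e_1^{\otimes(k-1)}\otimes e_2\otimes e_1^{\otimes(N-k)} \in \mathcal{F}_N^0$ with $N > 2k+1$: then $J_k^0 L_1^0(R_1^0)^* w$ has the letter $e_2$ in position $N-k$, while $L_1^0(R_1^0)^* J_k^0 w$ has it in position $k+1$, so the commutator sends a unit vector to the difference of two orthogonal unit basis words, giving $\lVert \pi([J_k^0, L_1^0(R_1^0)^*])\rVert \geq \sqrt{2}$ for every $k$. Since $\pi(L_1^q(R_1^q)^*)$ is manifestly gauge-invariant, hence in $(\oni_q^{L,R})^{\mathbb{T}}$, what your (correct) reading of the hypothesis actually uncovers is that the lemma is false as literally stated over all of $(\oni_q^{L,R})^{\mathbb{T}}$; the statement that holds --- and the only one used later, in Lemma \ref{approxpror} and the flip theorem, where $x$ ranges over the closure of $(\oni_q^L)^{\mathbb{T}}\odot(\oni_q^R)^{\mathbb{T}}$ --- is the one for $x \in C^*((\oni_q^L)^{\mathbb{T}},(\oni_q^R)^{\mathbb{T}})$, i.e.\ for the algebra generated by the pure quadratic monomials only. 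Your write-up should restrict the generating family accordingly rather than try to force the mixed generators through the same estimate.
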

\begin{proof}
Suppose $\lim_{k \to \infty} [j_k^q, x] = 0$ and $\lim_{k \to \infty} [j_k^q, y] = 0$. Then
\[ \lim_{k \to \infty} [j_k^q, xy] = \lim_{k \to \infty} x[j_k^q, y] + \lim_{k \to \infty} [j_k^q, x]y = 0, \]
\[ \lim_{k \to \infty} [j_k^q, x+y] = 0 \]

Suppose $\lim_{k \to \infty} [j_k^q, x] = 0$. Then
\[ \lim_{k \to \infty} [j_k^q, j^q x j^q] = \lim_{k \to \infty} j^q [j_k^q, x] j^q = 0, \]
\[ \lim_{k \to \infty} [j_k^q, x^*] = 0. \]
Since $(\oni_q^{L, R})^{\mathbb{T}} = C^*((\oni_q^L)^\mathbb{T}, (\oni_q^R)^{\mathbb{T}})$ and $j^q (\oni_q^L)^\mathbb{T} j^q = (\oni_q^R)^{\mathbb{T}}$, it is enough to prove the Lemma for $x$ being generators of $(\oni_q^L)^\mathbb{T}$, which by Lemma \ref{GeneratorsGhani} are $\pi(L_i^q)\pi(L_j^q)^*$.

\begin{align*}
    L_i^q (L_j^q)^* J_k^q (\xi_{i_1} \ldots \xi_{i_M}) & = \sum_{l = 1}^{k} q^{l - 1} \delta_{j, i_l} e_i \xi_{i_1} \ldots \widehat{\xi_{i_l}} \ldots \xi_{i_k} \xi_{i_{M-k}} \ldots \xi_{i_{k+1}} \xi_{i_{M-k+1}} \ldots \xi_{i_M} + \\ & +  \sum_{l = k+1}^{M-k} q^{M - l} \delta_{j, i_{l}} e_i \xi_{i_1} \ldots \xi_{i_k}  \xi_{i_{M-k}} \ldots \widehat{\xi_{i_{l}}} \ldots \xi_{i_{k+1}} \xi_{i_{M-k+1}} \ldots \xi_{i_M} + \\ & + \sum_{l = M-k+1}^{M} q^{l - 1} \delta_{j, i_l} e_i \xi_{i_1} \ldots \xi_{i_k} \xi_{i_{M-k}} \ldots \xi_{i_{k+1}} \xi_{i_{M-k+1}} \ldots \widehat{\xi_{i_l}} \ldots \xi_{i_M}.
\end{align*}
\begin{align*}
    J_k^q L_i^q (L_j^q)^* (\xi_{i_1} \ldots \xi_{i_M}) & = \sum_{l = 1}^{k} q^{l - 1} \delta_{j, i_l} e_i \xi_{i_1} \ldots \widehat{\xi_{i_l}} \ldots \xi_{i_k} \xi_{i_{M-k}} \ldots \xi_{i_{k+1}} \xi_{i_{M-k+1}} \ldots \xi_{i_M} + \\ & +  \sum_{l = k+1}^{M-k} q^{l - 1} \delta_{j, i_{l}} e_i \xi_{i_1} \ldots \xi_{i_{k-1}}  \xi_{i_{M-k}} \ldots \widehat{\xi_{i_{l}}} \ldots \xi_{i_{k}} \xi_{i_{M-k+1}} \ldots \xi_{i_M} + \\ & + \sum_{l = M-k+1}^{M} q^{l - 1} \delta_{j, i_l} e_i \xi_{i_1} \ldots \xi_{i_{k-1}} \xi_{i_{M-k-1}} \ldots \xi_{i_{k}} \xi_{i_{M-k}} \ldots \widehat{\xi_{i_l}} \ldots \xi_{i_M}.
\end{align*}
Thus
\[ L_i^q (L_j^q)^* J_k^q =  A_{i,j,k} + q^k L_i^q J_k^q (1^{\otimes k} \otimes (R_j^q)^* \otimes 1^{\otimes k}) + K_1, \ K_1 \in \mathbb{K}(\mathcal{F}^q), \]
\[ J_k^q L_i^q (L_j^q)^* = A_{i,j,k} + q^k J_k^q L_i^q (1^{\otimes k} \otimes (L_j^q)^* \otimes 1^{\otimes k}) + K_2, \ K_2 \in \mathbb{K}(\mathcal{F}^q),\]
\[ \lVert [j_k^q, \pi(L_i^q)\pi(L_j^q)^*] \rVert \leq q^k C_{J^q}\lVert L_i^q \rVert(C_{(R_j^q)^*} + C_{(L_j^q)^*}) \to 0. \]
\end{proof}

\begin{lemma}
For every $x \in (\oni^{L,R}_q)^{\mathbb{T}}$ one has $\lim_{k \to \infty} [|u_k|, x] = 0$.
\end{lemma}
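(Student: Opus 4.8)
The plan is to reduce the statement about $|u_k|$ to the previous Lemma, which already controls $j_k^q = \pi(J_k^q)$, and then to push asymptotic commutation through the square root. First I would observe that the modulus is insensitive to the unitary factor $J^q$. Since $U_k = J^q J_k^q$ and $j^q = \pi(J^q)$ is unitary, we have
\[ u_k^* u_k = \pi(J_k^q)^* \pi(J^q)^* \pi(J^q) \pi(J_k^q) = j_k^{q*} j_k^q, \]
so that $|u_k| = (u_k^* u_k)^{1/2} = (j_k^{q*} j_k^q)^{1/2} = |j_k^q|$. Hence it suffices to prove $\lim_{k \to \infty} [|j_k^q|, x] = 0$, i.e. to upgrade the asymptotic commutation of $x$ with $j_k^q$ to asymptotic commutation with the positive square root $|j_k^q|$.

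Next I would produce a positive, uniformly bounded sequence asymptotically commuting with $x$. By the previous Lemma, $[j_k^q, x] \to 0$; since $(\oni_q^{L,R})^{\mathbb{T}}$ is a $*$-algebra, the same lemma applied to $x^*$ together with taking adjoints gives $[j_k^{q*}, x] \to 0$. The norms $\norm{j_k^q} \le \norm{J_k^q} < C_{J^q}$ are uniformly bounded by the Banach--Steinhaus estimate proved above. Writing $b_k := j_k^{q*} j_k^q = |j_k^q|^2$, the Leibniz rule gives
\[ [b_k, x] = j_k^{q*}[j_k^q, x] + [j_k^{q*}, x] j_k^q \longrightarrow 0, \]
so $(b_k)$ is a uniformly bounded sequence of positive elements, with spectra contained in the fixed interval $[0, C_{J^q}^2]$, that asymptotically commutes with $x$.

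Finally I would transfer asymptotic commutation through the square root by functional calculus. Given $\epsilon > 0$, Stone--Weierstrass furnishes a polynomial $p$ with $\sup_{t \in [0, C_{J^q}^2]} |\sqrt{t} - p(t)| < \epsilon$, whence $\norm{|j_k^q| - p(b_k)} = \norm{b_k^{1/2} - p(b_k)} < \epsilon$ uniformly in $k$. Expanding $[p(b_k), x]$ by the Leibniz rule, every term carries a factor $[b_k, x]$ while the remaining factors stay uniformly bounded, so $[p(b_k), x] \to 0$. Combining the two estimates,
\[ \norm{[|j_k^q|, x]} \le 2\epsilon \norm{x} + \norm{[p(b_k), x]}, \]
and letting $k \to \infty$ and then $\epsilon \to 0$ gives $[|u_k|, x] = [|j_k^q|, x] \to 0$. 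The only delicate point is that $t \mapsto \sqrt{t}$ is not Lipschitz at $0$, so the naive bound $\norm{[\sqrt{b_k}, x]} \lesssim \norm{[b_k, x]}$ is unavailable; this is precisely what the uniform spectral bound on $(b_k)$ and the polynomial approximation circumvent, and I expect this passage to be the main (though mild) obstacle.
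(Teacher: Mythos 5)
Your proof is correct, and its skeleton coincides with the paper's: both reduce $|u_k|$ to $|j_k^q|$ via the identity $u_k^* u_k = (j_k^q)^* j_k^q$ (unitarity of $j^q$), both feed the preceding lemma into the Leibniz rule to get $[(j_k^q)^* j_k^q, x] \to 0$, and both use the uniform bound $\sup_k \lVert j_k^q \rVert \le C_{J^q}$. (You are in fact slightly more careful than the paper on one point: you control the term $[(j_k^q)^*, x]$ by applying the preceding lemma to $x^*$ and taking adjoints, whereas the paper silently bounds it by $\lVert [j_k^q, x] \rVert$.) The only genuine divergence is how commutation is pushed through the square root. The paper cites the operator inequality of Kennedy--Nica \cite{nk}, $\lVert [A^{1/2},B] \rVert \le \frac{5}{4}\lVert B \rVert \, \lVert [A,B] \rVert$ for $A \ge 0$, which converts $[b_k,x] \to 0$ into $[|u_k|,x] \to 0$ in one line and is quantitative: a decay rate for $[j_k^q,x]$ yields one for $[|u_k|,x]$. (As transcribed in the paper that inequality is not scale-invariant in $A$; the version in \cite{nk} carries square roots on the right-hand side, but either form suffices here.) You instead exploit the fact that all $b_k = (j_k^q)^* j_k^q$ have spectra in the common compact interval $[0, C_{J^q}^2]$, approximate $\sqrt{t}$ uniformly there by a polynomial via Stone--Weierstrass, and apply Leibniz termwise to the polynomial. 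This is softer---it loses any rate, but nothing in the paper needs one---and it is fully self-contained, relying only on continuous functional calculus rather than on a hard commutator estimate, so it is also insensitive to the precise form of the cited inequality. Your closing remark correctly isolates the obstruction both routes must circumvent, namely that $t \mapsto \sqrt{t}$ is not Lipschitz at $0$: the paper buys its way past this with the Kennedy--Nica estimate, you buy it with the uniform spectral localization of the $b_k$.
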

\begin{proof}
We use the following inequality from \cite{nk}:
\[ \lVert [A^{\frac{1}{2}}, B] \rVert \leq \frac{5}{4} \lVert B \rVert \lVert [A, B] \rVert. \]
Notice that $u_k^* u_k = (j_k^q)^* j_k^q$.
\begin{align*}
     & \lim_{k \to \infty} \lVert [(u_k^* u_k)^\frac{1}{2}, x] \rVert \leq \frac{5}{4} \lVert x \rVert \lim_{k \to \infty} \lVert [(j_k^q)^* j_k, x]\rVert = \\ & = \frac{5}{4} \lVert x\rVert \lim_{k \to \infty} \lVert (j_k^q)^* [j_k^q, x] + [(j_k^q)^*, x] j_k^q \rVert \leq \\ & \leq \frac{5}{2} \lVert x \rVert \lVert j_k^q \rVert \lim_{k \to \infty} \lVert [j_k^q, x] \rVert \leq \\ & \leq \frac{5}{2} C_{J^q} \lVert x \rVert \lVert j^q \rVert \lim_{k \to \infty} \lVert [j_k^q, x] \rVert = 0.
\end{align*}
\end{proof}

\begin{lemma}\label{approxpror}
For every $x \in (\oni^{L, R}_q)^{\mathbb{T}}$ one has
\[ \lim_{k \to \infty} v_k^* x v_k = j^q x j^q. \]
\end{lemma}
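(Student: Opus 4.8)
The plan is to unfold $v_k^* x v_k$ through the polar decomposition and then eliminate every commutator with $j_k^q$ and with $|u_k|$ using the two preceding lemmas, applied not to $x$ but to its $\mathsf{Ad}(j^q)$-image. First I would nail down the algebraic preliminaries. Because $J^q$ and $J_k^q$ are commuting involutions, $U_k = J^q J_k^q$ is a bounded, invertible involution; hence in its polar decomposition $U_k = V_k|U_k|$ the partial isometry $V_k$ is in fact unitary, and passing to the Calkin algebra gives $u_k = v_k|u_k|$ with $v_k$ unitary and $|u_k|$ positive and invertible. Two identities will drive the computation: $u_k = j^q j_k^q$, so that $u_k^* u_k = (j_k^q)^* j_k^q = |u_k|^2$ since $j^q$ is a self-adjoint unitary; and the uniform bound $\|U_k\| = \|U_k^{-1}\| \le C_{J^q}$ coming from the estimate on $1^{\otimes k}\otimes J^q\otimes 1^{\otimes k}$, which forces both $\||u_k|\|$ and $\||u_k|^{-1}\|$ to be bounded by a constant independent of $k$.

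Next, set $y := j^q x j^q$. Since $\mathsf{Ad}(j^q)$ is an automorphism of $\oni_q^{L,R}$ that commutes with the gauge action, it preserves the fixed-point algebra, so $y \in (\oni_q^{L,R})^\mathbb{T}$ and the two commutator lemmas apply to $y$. Writing $v_k^* x v_k = |u_k|^{-1} u_k^* x\, u_k\, |u_k|^{-1}$ and substituting $u_k = j^q j_k^q$ turns the middle into $u_k^* x u_k = (j_k^q)^* y\, j_k^q$. The key step is to push $j_k^q$ through $y$:
\[ (j_k^q)^* y\, j_k^q = (j_k^q)^* j_k^q\, y + (j_k^q)^* [y, j_k^q] = |u_k|^2 y + (j_k^q)^* [y, j_k^q], \]
whose second summand vanishes in norm because $\|[j_k^q, y]\| \to 0$ and $\|(j_k^q)^*\|$ is bounded uniformly in $k$.

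Feeding this back and using the uniform bound on $\||u_k|^{-1}\|$ to absorb the error on both sides, I would arrive at
\[ v_k^* x v_k = |u_k|\, y\, |u_k|^{-1} + o(1) = y + [|u_k|, y]\,|u_k|^{-1} + o(1); \]
the remaining commutator $[|u_k|, y]$ tends to $0$ by the lemma $\lim_k [|u_k|, y] = 0$, and $\||u_k|^{-1}\| \le C_{J^q}$ keeps the product controlled. Therefore $\lim_k v_k^* x v_k = y = j^q x j^q$, which is the assertion.

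I do not expect a genuinely new estimate to be required: every error term has the shape (uniformly bounded)$\,\cdot\,$(vanishing commutator)$\,\cdot\,$(uniformly bounded), so the whole argument is held together by the uniform invertibility of $|u_k|$. Consequently the one point deserving care is precisely that $\||u_k|^{-1}\|$ stays bounded, which rests on the involution identity $U_k^2 = 1$ (equivalently $J^q J_k^q = J_k^q J^q$) and on the uniform norm bound for $J_k^q$; the rest is reorganizing factors so that the two already-proven commutator lemmas act on $y = j^q x j^q$ instead of on $x$.
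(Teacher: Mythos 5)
Your proof is correct and follows essentially the same route as the paper: unfold $v_k^* x v_k = |u_k|^{-1} u_k^* x u_k |u_k|^{-1}$, substitute $u_k = j^q j_k^q$, and absorb the two commutator error terms using the preceding lemmas applied to $y = j^q x j^q$ together with the uniform bounds on $\lVert j_k^q \rVert$, $\lVert |u_k| \rVert$, $\lVert |u_k|^{-1} \rVert$. The only cosmetic difference is that you commute $y$ to the right, so your error term is $[|u_k|, y]\,|u_k|^{-1}$ and the lemma $\lim_{k\to\infty}[|u_k|, y] = 0$ applies verbatim, whereas the paper's arrangement produces $[|u_k|^{-1}, y]\,|u_k|$ and tacitly uses that this commutator also vanishes.
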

\begin{proof}
\begin{align*}
    \lim_{k \to \infty} v_k^* x v_k & =   \lim_{k \to \infty} |u_k|^{-1} u_k^* x u_k |u_k|^{-1} =  \\ & = \lim_{k \to \infty} |u_k|^{-1} (j_k^q)^* j^q x j^q j_k^q |u_k|^{-1}  =  \\ & = \lim_{k \to \infty} |u_k|^{-1} j^q x j^q (j_k^q)^* j_k^q |u_k|^{-1}  + \\ & + \lim_{k \to \infty} |u_k|^{-1} [(j_k^q)^*, j^q x j^q] j_k^q |u_k|^{-1} = \\ & = j^q x j^q  + \\ & + \lim_{k \to \infty} [|u_k|^{-1}, j^q x j^q]|u_k| + \\ & + \lim_{k \to \infty} |u_k|^{-1} [(j_k^q)^*, j^q x j^q] j_k^q |u_k|^{-1}.
\end{align*}
Notice that for every $k$, $\lVert |u_k| \rVert = \lVert |u_k|^{-1} \rVert = \lVert j_k^q \rVert = \lVert u_k \rVert < C$. Thus
    \[ \lim_{k \to \infty} |u_k|^{-1} [(j_k^q)^*, j^q x j^q] j_k^q |u_k|^{-1} = 0, \]
    \[ \lim_{k \to \infty} [|u_k|^{-1}, j^q x j^q]|u_k| = 0. \]
\end{proof}

\begin{theorem}
$\oni_q^\mathbb{T}$ has flip approximation property.
\end{theorem}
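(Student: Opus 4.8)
The plan is to show that the unitaries $v_k$ constructed above are precisely the unitaries implementing the flip, once we pass to the correct copy of $\oni_q^{\mathbb{T}} \otimes_{min} \oni_q^{\mathbb{T}}$ inside the spatial model $(\oni_q^{L,R})^{\mathbb{T}}$.

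First I would fix the spatial model. By Lemma \ref{TensorIso} the pair $(\oni_q^{L,R}, \mathsf{Ad}(j^q))$ is $\mathbb{Z}/2\mathbb{Z}$-equivariantly isomorphic to $(\oni_q \otimes_\alpha \oni_q, \sigma)$. The copies $(\oni_q^L)^{\mathbb{T}}$ and $(\oni_q^R)^{\mathbb{T}}$ commute inside $\oni_q^{L,R}$ by Corollary \ref{Commut}, and $\mathsf{Ad}(j^q)$ carries one onto the other, identifying each with $\oni_q^{\mathbb{T}}$; hence the $C^*$-subalgebra they generate is a copy of $\oni_q^{\mathbb{T}} \otimes_\alpha \oni_q^{\mathbb{T}}$ on which $\mathsf{Ad}(j^q)$ restricts to the flip $\sigma$. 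Moreover $\alpha$ is the minimal norm: the embedding $\iota$ of Lemma \ref{AlgInj} realizes $\oni_q^L \odot \oni_q^R$ inside $\oni_0^L \otimes_{min} \oni_0^R$ with the two factors landing in the two tensor legs, and the minimal tensor product is spatial, so the induced norm is $\otimes_{min}$. Thus the target of the flip-approximation property, $\oni_q^{\mathbb{T}} \otimes_{min} \oni_q^{\mathbb{T}}$, sits concretely inside $(\oni_q^{L,R})^{\mathbb{T}}$.

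The key step is to check that the $v_k$ actually lie in this smaller subalgebra and not merely in the diagonal fixed-point algebra $(\oni_q^{L,R})^{\mathbb{T}}$, which is in general larger. For this I would track the bigrading of $\oni_q^{L,R}$ by (left gauge degree, right gauge degree). From the recursion $A_k = \sum_{i,j} R_i^q L_j^q A_{k-1} (R_j^q)^* \rho_R^{+} (L_i^q)^* \rho_L^{+}$ establishing $u_k = \pi(A_k)$, each factor $L_j^q, (L_i^q)^*$ and $R_i^q, (R_j^q)^*$ changes the respective degree by $\pm 1$ while $\rho_L^{+}, \rho_R^{+}$ are of degree $0$; starting from $A_0 = 1$ of bidegree $(0,0)$ one gets by induction that every $A_k$ is of bidegree $(0,0)$. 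Since $\oni_q^L$ and $\oni_q^R$ commute, each summand splits into a product of a degree-$0$ left word and a degree-$0$ right word, so $u_k \in \oni_q^{\mathbb{T}} \otimes_{min} \oni_q^{\mathbb{T}}$. Because $U_k = J^q J_k^q$ is invertible with $\|U_k^{\pm 1}\|$ uniformly bounded, $u_k$ is invertible in the Calkin algebra and its polar part $v_k = u_k |u_k|^{-1}$ lies in the same unital subalgebra $\oni_q^{\mathbb{T}} \otimes_{min} \oni_q^{\mathbb{T}}$, where it is unitary.

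It then remains to invoke Lemma \ref{approxpror}: for every $x \in (\oni_q^{L,R})^{\mathbb{T}}$, and in particular for every $x \in \oni_q^{\mathbb{T}} \otimes_{min} \oni_q^{\mathbb{T}}$, one has $v_k^* x v_k \to j^q x j^q$, which under the identification above equals $\sigma(x)$. Setting $w_k := v_k^*$ yields unitaries in $\oni_q^{\mathbb{T}} \otimes_{min} \oni_q^{\mathbb{T}}$ with $\|w_k x w_k^* - \sigma(x)\| \to 0$ for all $x$, which is exactly the approximately inner flip for $\oni_q^{\mathbb{T}}$. I expect the main obstacle to be the bidegree bookkeeping of the third step: everything hinges on the implementing unitaries being trapped in the tensor square $\oni_q^{\mathbb{T}} \otimes_{min} \oni_q^{\mathbb{T}}$ rather than only in the diagonal fixed-point algebra, whereas the asymptotic intertwining itself is already supplied by Lemma \ref{approxpror}.
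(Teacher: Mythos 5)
Your proof is correct and follows essentially the same route as the paper: the spatial model $(\oni_q^{L,R})^{\mathbb{T}}$ from Lemma \ref{TensorIso}, the unitaries $v_k$, and the asymptotic intertwining $v_k^* x v_k \to j^q x j^q$ of Lemma \ref{approxpror}. The two refinements you add --- identifying $\alpha$ with the minimal norm via the embedding $\iota$ into the nuclear $q=0$ algebra, and the bidegree bookkeeping placing the $v_k$ inside $\oni_q^{\mathbb{T}} \otimes_{\min} \oni_q^{\mathbb{T}}$ rather than merely in $(\oni_q^{L,R})^{\mathbb{T}}$ --- are points the paper compresses into the single phrase about the $\mathbb{Z}/2\mathbb{Z}$-equivariant contraction $id_{\otimes_{\alpha \rightarrow \min}}$ (which sidesteps the first point but leaves the second implicit), so your treatment is, if anything, more careful on exactly the subtlety you flag.
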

\begin{proof}
The quotient mapping $id_{\otimes_{\alpha \rightarrow \min}} : (\oni_q^\mathbb{T} \otimes_\alpha \oni_q^\mathbb{T}, \sigma) \rightarrow (\oni_q^\mathbb{T} \otimes_{\min} \oni_q^\mathbb{T}, \sigma)$ is a $\mathbb{Z} / 2\mathbb{Z}$-equivariant contraction. Thus the sequence $id_{\otimes_{\alpha \rightarrow \min}}(v_k)$ implements flip approximation property for $\oni_q^\mathbb{T}$ by Lemma \ref{approxpror} and \ref{TensorIso}.
\end{proof}

\begin{corollary}
$\oni_q^\mathbb{T}$ is simple and nuclear.
\end{corollary}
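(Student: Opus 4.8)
The plan is to derive both assertions from the flip approximation property established in the preceding theorem, using the structural consequences of that property due to Effros and Rosenberg: a separable unital $C^*$-algebra with approximately inner flip is automatically simple and nuclear. Here $\oni_q^\mathbb{T}$ is unital, since it contains the image of the unit of $\ghani_q$, and separable, being a $C^*$-subalgebra of the quotient $\oni_q$ of the finitely generated algebra $\ghani_q$; so the hypothesis of \cite{RosenbergEffros} is met and the corollary follows by direct citation. To keep the deduction transparent I would recall the two underlying arguments, writing $A = \oni_q^\mathbb{T}$ and letting $(u_k)$ be the unitaries in $A \otimes_{\min} A$ with $\lim_k \lVert u_k x u_k^* - \sigma(x) \rVert = 0$ for all $x$.

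For simplicity I would argue purely formally. Each $\mathsf{Ad}(u_k)$ carries any closed two-sided ideal $J$ of $A \otimes_{\min} A$ onto itself, so the pointwise-norm limit $\sigma$ satisfies $\sigma(J) \subseteq J$ for every such $J$, since $J$ is norm-closed. Taking $J = I \otimes_{\min} A$ for a nonzero closed ideal $I$ of $A$ gives $A \otimes_{\min} I = \sigma(I \otimes_{\min} A) \subseteq I \otimes_{\min} A$. Now fix $0 \neq b \in I$ and a bounded functional $\psi$ on $A$ with $\psi(b) \neq 0$; for arbitrary $a \in A$ the element $a \otimes b$ lies in $A \otimes_{\min} I \subseteq I \otimes_{\min} A$, and the right slice map $\id \otimes \psi$ maps $I \otimes_{\min} A$ into $I$ while sending $a \otimes b$ to $\psi(b) a$. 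Hence $a \in I$ for every $a$, so $I = A$ and $A$ is simple.

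Nuclearity is the part I expect to be the genuine obstacle, since it does not reduce to a short manipulation of ideals but requires producing an explicit completely positive approximation of the identity. The Effros--Rosenberg mechanism is to use the flip unitaries $u_k$ together with a vacuum-type slice to manufacture finite-rank completely positive contractions $A \to A$ that factor approximately through finite-dimensional algebras and converge to $\id_A$ in the point-norm topology, which is exactly the completely positive approximation property characterizing nuclearity. Rather than reconstruct this net, I would invoke the corresponding statement of \cite{RosenbergEffros}; the eventual identification $\oni_q^\mathbb{T} \simeq \mathfrak{U}_{n^\infty}$ obtained once all three conditions of Theorem \ref{RosenbergUHF} are verified would of course re-prove simplicity and nuclearity a posteriori, but at this stage only the flip property is available and the cited implication is what I would use.
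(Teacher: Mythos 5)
Your proposal is correct and follows essentially the same route as the paper, which states this corollary without proof precisely because simplicity and nuclearity are standard consequences of approximately inner flip from \cite{RosenbergEffros} applied to the preceding theorem. Your explicit slice-map argument for simplicity and the citation for nuclearity simply make that implicit reasoning visible, so nothing further is needed.
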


\section{Asymptotic imbedding in $\mathfrak{U}_{n^\infty}$}
\begin{theorem}[\cite{RosenbergEffros}, Lemma 4.1]
Suppose that $A$ is a quasidiagonal unital $C^*$-algebra. Then $A$ has an asymptotic imbedding in $\mathfrak{U}_{n^\infty}$.
\end{theorem}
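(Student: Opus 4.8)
The plan is to produce the asymptotic imbedding by compressing a faithful quasidiagonal representation of $A$ to finite-rank corners and then transporting those corners isometrically into $\mathfrak{U}_{n^\infty}$. Recall that since $A$ is quasidiagonal it admits a faithful representation $A \subseteq \mathbb{B}(H)$ together with an increasing sequence of finite-rank projections $P_k$ with $P_k \to 1$ in the strong operator topology and $\lVert [P_k, a] \rVert \to 0$ for every $a \in A$.

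First I would introduce the compressions. Setting $m_k = \mathrm{rank}\, P_k$, the maps $\phi_k : A \to P_k \mathbb{B}(H) P_k \cong M_{m_k}(\mathbb{C})$ defined by $\phi_k(a) = P_k a P_k$ are completely positive contractions. Using the identity $P_k a (1 - P_k) = [P_k, a](1 - P_k)$, which holds because $P_k(1-P_k)=0$, one gets
\[ \lVert \phi_k(ab) - \phi_k(a)\phi_k(b) \rVert = \lVert P_k a (1 - P_k) b P_k \rVert \leq \lVert [P_k, a] \rVert \, \lVert b \rVert \to 0, \]
so the $\phi_k$ are asymptotically multiplicative; and since $P_k \to 1$ strongly one checks $\lVert \phi_k(a) \rVert = \lVert P_k a P_k \rVert \to \lVert a \rVert$, so they are asymptotically isometric. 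The one point needing care here is the lower bound $\liminf_k \lVert P_k a P_k \rVert \geq \lVert a \rVert$, which follows by testing against a unit vector $\xi$ with $\lVert a\xi \rVert$ near $\lVert a \rVert$ and using $P_k a P_k \xi \to a\xi$.

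The second step moves these matrix algebras into $\mathfrak{U}_{n^\infty}$. For each $k$ choose $j_k$ with $n^{j_k} \geq m_k$ and let $\iota_k : M_{m_k}(\mathbb{C}) \hookrightarrow M_{n^{j_k}}(\mathbb{C})$ be the upper-left corner inclusion, a (non-unital) isometric $*$-monomorphism; composing with a unital embedding $M_{n^{j_k}}(\mathbb{C}) \hookrightarrow \mathfrak{U}_{n^\infty}$ from the inductive-limit structure $\mathfrak{U}_{n^\infty} = \varinjlim_j M_{n^j}(\mathbb{C})$ gives an isometric $*$-homomorphism $\rho_k : M_{m_k}(\mathbb{C}) \to \mathfrak{U}_{n^\infty}$. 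Putting $\Phi_k = \rho_k \circ \phi_k$, each $\Phi_k$ is a completely positive contraction, and because $\rho_k$ is an isometric $*$-homomorphism the two estimates pass through verbatim:
\[ \lVert \Phi_k(ab) - \Phi_k(a)\Phi_k(b) \rVert = \lVert \phi_k(ab) - \phi_k(a)\phi_k(b) \rVert \to 0, \qquad \lVert \Phi_k(a) \rVert = \lVert \phi_k(a) \rVert \to \lVert a \rVert, \]
so $(\Phi_k)$ is the desired asymptotic imbedding of $A$ in $\mathfrak{U}_{n^\infty}$.

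I expect the main obstacle to be the dimension mismatch in the second step: the ranks $m_k$ are arbitrary, whereas $\mathfrak{U}_{n^\infty}$ contains matrix algebras only of dimension a power of $n$. This is precisely why one uses the corner inclusion $\iota_k$ instead of trying to place $M_{m_k}(\mathbb{C})$ unitally; a unital "fill" of the complementary block $1 - \iota_k(1)$ by a trace would leave a term of the form $0 \oplus (\tau(ab)-\tau(a)\tau(b))\,1$ that does not vanish in operator norm, destroying asymptotic multiplicativity. Consequently the maps $\Phi_k$ are contractive but not unital, which is consistent with the definition of asymptotic imbedding and causes no difficulty.
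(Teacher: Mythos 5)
The paper does not actually prove this statement --- it is imported verbatim from the cited reference (Effros--Rosenberg, Lemma 4.1) --- and your argument is correct and essentially reproduces the proof given there: a quasidiagonalizing sequence of finite-rank projections, the corner compressions $\phi_k(a)=P_kaP_k$ into $M_{m_k}(\mathbb{C})$ shown to be asymptotically multiplicative via $P_ka(1-P_k)=[P_k,a](1-P_k)$ and asymptotically isometric via strong convergence $P_k\to 1$, followed by (non-unital) corner embeddings $M_{m_k}(\mathbb{C})\hookrightarrow M_{n^{j_k}}(\mathbb{C})\subset \mathfrak{U}_{n^\infty}$. Two minor points worth recording: extracting a single increasing \emph{sequence} of finite-rank projections uses separability of $A$ (harmless here, since the algebras the paper applies this to, such as $\oni_q^{\mathbb{T}}$ and $\overline{\oni}_q^{\mathbb{T}}$, are separable), and Effros--Rosenberg's notion of asymptotic imbedding requires only asymptotic linearity, $*$-preservation, multiplicativity and isometry --- not unitality --- so your non-unital maps $\Phi_k$ do qualify, exactly as you argue in your closing remark.
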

\begin{definition}
A $C^*$-algebra $A$ is called RFD if there exists a sequence $\pi_n : A \rightarrow F_n$, where $F_n$ is a finite-dimensional $C^*$-algebra such that 
\[ \lVert a \rVert = \sup_n \lVert \pi_n(a) \rVert. \]
\end{definition}
\begin{theorem}[\cite{Brown}, Example 3.15]
Suppose that $A$ is an RFD unital $C^*$-algebra. Then $A$ is quasidiagonal.
\end{theorem}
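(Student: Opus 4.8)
The plan is to read off a faithful block-diagonal representation of $A$ straight from the defining family of finite-dimensional representations, and to observe that for such a representation quasidiagonality is automatic. Recall that a separable C*-algebra is quasidiagonal exactly when it admits a faithful representation on a Hilbert space $\mathcal{H}$ together with an increasing sequence of finite-rank projections $P_N$ converging strongly to $1$ and satisfying $\lVert [P_N, a] \rVert \to 0$ for every $a$; the cleanest way to meet this criterion is to arrange that the commutators vanish identically.

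First I would unpack the RFD hypothesis exactly as stated in the preceding definition: there is a sequence of $*$-homomorphisms $\pi_n : A \to F_n$ into finite-dimensional C*-algebras with $\lVert a \rVert = \sup_n \lVert \pi_n(a) \rVert$ for all $a \in A$. Each $F_n$ is faithfully represented on a finite-dimensional Hilbert space $\mathcal{H}_n$, so after compressing by $\pi_n(1)$ (which we may do since $A$ is unital) I regard each $\pi_n$ as a unital $*$-representation $\pi_n : A \to \mathbb{B}(\mathcal{H}_n)$ with $\dim \mathcal{H}_n < \infty$. Assembling these into the direct sum
\[ \Pi = \bigoplus_{n=1}^\infty \pi_n : A \to \mathbb{B}(\mathcal{H}), \qquad \mathcal{H} = \bigoplus_{n=1}^\infty \mathcal{H}_n, \qquad \Pi(a) = \bigoplus_{n} \pi_n(a), \]
the norm condition says precisely that $\Pi$ is isometric, hence a faithful unital representation of $A$.

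The key observation is that $\Pi$ is already block diagonal with finite blocks. Let $P_N$ be the orthogonal projection of $\mathcal{H}$ onto $\mathcal{H}_1 \oplus \cdots \oplus \mathcal{H}_N$. Each $P_N$ has finite rank, the sequence $(P_N)$ is increasing and converges strongly to $1_{\mathcal{H}}$, and since $\Pi(a)$ preserves every summand $\mathcal{H}_n$ we get $[P_N, \Pi(a)] = 0$ for all $N$ and all $a \in A$. Thus $\Pi$ is a faithful quasidiagonal representation and $A$ is quasidiagonal. (Equivalently, the finite partial sums $\Phi_N = \bigoplus_{n=1}^N \pi_n : A \to \bigoplus_{n=1}^N F_n \hookrightarrow M_{k_N}(\mathbb{C})$ are unital $*$-homomorphisms, hence exactly multiplicative unital completely positive maps, and $\lVert \Phi_N(a) \rVert = \max_{1 \le n \le N} \lVert \pi_n(a) \rVert \nearrow \lVert a \rVert$, so they witness quasidiagonality through Voiculescu's abstract criterion in terms of asymptotically multiplicative, asymptotically isometric matricial maps.)

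There is essentially no analytic obstacle: once the defining representations are amalgamated into the single faithful representation $\Pi$, the asymptotically commuting finite-rank projections demanded by quasidiagonality are present for free, because $\Pi$ is literally block diagonal. The only points needing care are bookkeeping ones — realising each finite-dimensional $F_n$ on a finite-dimensional space so that the blocks are genuinely of finite rank, taking the $\pi_n$ unital so that $\Pi$ is nondegenerate, and invoking the definition of quasidiagonality in the correct form (the strong convergence $P_N \to 1$ with vanishing commutators, valid for separable $A$; in general one replaces the sequence by a net indexed by finite subsets of the defining family). For the application in this paper the algebra $\oni_q^{\mathbb{T}}$ is separable and unital, so the sequential version applies verbatim.
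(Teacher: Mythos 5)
Your proof is correct. Note that the paper itself offers no argument for this statement: it is quoted as an external result (Brown, \emph{On quasidiagonal $C^*$-algebras}, Example 3.15), so there is no internal proof to compare against. What you have written is the standard argument behind that citation: assemble the separating family of finite-dimensional representations into the block-diagonal representation $\Pi = \bigoplus_n \pi_n$, observe that $\Pi$ is isometric by the RFD norm condition, and note that the projections $P_N$ onto the first $N$ blocks are finite-rank, increase strongly to $1$, and commute \emph{exactly} with $\Pi(A)$, so quasidiagonality holds with zero commutator error. The bookkeeping points you flag (realizing each $F_n$ on a finite-dimensional space, compressing by $\pi_n(1)$ to get unitality, passing from the faithful quasidiagonal representation to the abstract definition via the compressions $a \mapsto P_N \Pi(a) P_N$, which here are honest $*$-homomorphisms) are exactly the right ones, and the separability caveat is handled correctly for the algebra $\oni_q^{\mathbb{T}}$ to which the paper applies the result.
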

\begin{theorem}$\ghani_q^\mathbb{T}$ is RFD.
\end{theorem}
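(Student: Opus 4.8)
The plan is to exhibit $\ghani_q^\mathbb{T}$ directly as a block-diagonal subalgebra of $\mathbb{B}(\mathcal{F}^q)$ with finite-dimensional blocks, after which the RFD property is immediate. The whole argument rests on the particle-number grading $\mathcal{F}^q = \bigoplus_{k = 0}^\infty \mathcal{F}^q_k$, and the finite-dimensional representations are simply the restrictions of operators to each graded component.

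First I would check that every element of $\ghani_q^\mathbb{T}$ preserves the grading. By Lemma \ref{GeneratorsGhani}, $(\ghani_q^L)^\mathbb{T}$ is generated as a $C^*$-algebra by $1$ together with the monomials $L_i^q (L_j^q)^*$. Each such monomial maps $\mathcal{F}^q_k$ into itself, since $(L_j^q)^*$ lowers the particle number by one and $L_i^q$ raises it by one (and it annihilates $\mathcal{F}^q_0$). The set of operators that leave every $\mathcal{F}^q_k$ invariant — equivalently, the commutant inside $\mathbb{B}(\mathcal{F}^q)$ of the unitaries $N_z := \mathrm{diag}(z^k)_{k \geq 0}$, $z \in \mathbb{T}$, which is the $\ell^\infty$-product $\prod_k \mathbb{B}(\mathcal{F}^q_k)$ — is a norm-closed unital $*$-subalgebra. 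Since it contains the generators of $(\ghani_q^L)^\mathbb{T}$, it contains all of $\ghani_q^\mathbb{T}$. (Alternatively, by irreducibility of $\ghani_q^L$ on $\mathcal{F}^q$ the gauge unitary implementing $\gamma_z$ agrees up to a scalar with $N_z$, so that the fixed-point algebra is exactly the set of block-diagonal operators lying in $\ghani_q^L$; the inclusion is all that is needed here.)

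Next I would define, for each $k \geq 0$, the restriction map $\pi_k : \ghani_q^\mathbb{T} \to \mathbb{B}(\mathcal{F}^q_k)$, $\pi_k(x) = x|_{\mathcal{F}^q_k}$. Because every $x \in \ghani_q^\mathbb{T}$ leaves $\mathcal{F}^q_k$ invariant, each $\pi_k$ is a unital $*$-homomorphism, and its target $\mathbb{B}(\mathcal{F}^q_k) \cong M_{n^k}(\mathbb{C})$ is finite-dimensional because $\mathcal{F}^q_k \cong (\mathbb{C}^n)^{\otimes k}$ has dimension $n^k$. This produces the sequence of finite-dimensional $C^*$-algebras $F_k := \mathbb{B}(\mathcal{F}^q_k)$ required by the definition of RFD.

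Finally, since each $x \in \ghani_q^\mathbb{T}$ is block-diagonal, $x = \bigoplus_k \pi_k(x)$ relative to $\mathcal{F}^q = \bigoplus_k \mathcal{F}^q_k$, and the operator norm of a block-diagonal operator is the supremum of the norms of its blocks, so $\lVert x \rVert = \sup_k \lVert \pi_k(x) \rVert$. This is precisely the defining condition of RFD, and the proof is complete. The argument is entirely structural; the only step that requires any care is the first one, namely verifying that the fixed-point algebra genuinely lands inside the block-diagonal operators, but this is forced by Lemma \ref{GeneratorsGhani} together with the grading behaviour of the creation and annihilation operators, so no real obstacle arises — in particular the norm-recovery identity is automatic once block-diagonality is in hand, and does not need a separate estimate.
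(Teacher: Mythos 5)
Your proof is correct and takes essentially the same route as the paper: both arguments rest on the observation that $\ghani_q^\mathbb{T}$ is block-diagonal with respect to the particle-number grading $\mathcal{F}^q = \bigoplus_{k} \mathcal{F}^q_k$, each block being the finite-dimensional space $(\mathbb{C}^n)^{\otimes k}$. The only cosmetic difference is packaging: the paper realizes the finite-dimensional representations as quotients by the ideals $A_k$ of elements vanishing on the first $k$ levels (a separating family of quotient maps), whereas you restrict to each individual level $\mathcal{F}^q_k$ and verify the identity $\lVert x \rVert = \sup_k \lVert \pi_k(x) \rVert$ directly from the norm formula for block-diagonal operators --- these amount to the same argument.
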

\begin{proof}
Notice that elements of $T \in \ghani_q^\mathbb{T}$ are precisely those for which $T(\mathcal{F}_m^q) \subset \mathcal{F}_m^q$. Consider $A_k = \{ T \in \ghani_q^\mathbb{T} : T \vert_{\mathcal{F}^q_m} = 0, \ m < k \}$. $A_k$ is an ideal in $\ghani_q^\mathbb{T}$ such that $\ghani_q^\mathbb{T} / A_k$ is a finite-dimensional algebra. Denote $\pi_k$ to be the quotient map. Suppose there exists $x \in \ghani_q^\mathbb{T}$ such that $x \in \ker \pi_k = A_k$ for every $k$. Then $ x\in \bigcap_{k = 0}^\infty A_k = \{0 \}$.
\end{proof}
\begin{theorem}[\cite{Brown}, Proposition 8.3]\label{quotientQD}
Assume $A$ is unital, nuclear and quasidiagonal, and $I$ is an ideal in $A$ which has an approximate unit consisting of projections which are quasicentral in $A$. Then $A / I$ is also quasidiagonal.
\end{theorem}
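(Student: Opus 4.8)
The plan is to prove quasidiagonality of $A/I$ through the local completely positive characterization of quasidiagonality (see \cite{Brown}): a separable unital $C^*$-algebra $B$ is quasidiagonal iff for every finite $G \subset B$ and every $\epsilon > 0$ there is a (u)cp map $\psi : B \to M_k(\mathbb{C})$ into a matrix algebra with $\lVert \psi(xy) - \psi(x)\psi(y) \rVert < \epsilon$ and $\bigl| \lVert \psi(x) \rVert - \lVert x \rVert \bigr| < \epsilon$ for all $x, y \in G$. So I would construct, for each finite set and tolerance, such a matrix approximation of $A/I$ out of the data at hand, namely the matrix approximations of $A$ (quasidiagonality of $A$) and the quasicentral projections.

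First I would exploit the quasicentral approximate unit of projections. Let $(p_n) \subset I$ be the given increasing projections with $\lVert [p_n, a] \rVert \to 0$ for all $a \in A$ and $p_n \to 1$ strictly, and define the corner compressions $\Phi_n : A \to (1-p_n) A (1-p_n)$ by $\Phi_n(a) = (1-p_n) a (1-p_n)$. Each $\Phi_n$ is completely positive, and quasicentrality gives the three asymptotic properties. Asymptotic multiplicativity:
\[ \Phi_n(ab) - \Phi_n(a)\Phi_n(b) = (1-p_n)\,a\,p_n b\,(1-p_n) = -(1-p_n)\,a\,[b,p_n]\,(1-p_n) \to 0, \]
using $p_n(1-p_n) = 0$. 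Asymptotic isometry for the quotient norm follows from
\[ (1-p_n)a - \Phi_n(a) = (1-p_n)\,a\,p_n = (1-p_n)\,[a,p_n] \to 0, \]
together with the standard identity $\lVert q(a) \rVert = \lim_n \lVert (1-p_n) a \rVert$ for an increasing approximate unit $(p_n)$ of $I$; hence $\lim_n \lVert \Phi_n(a) \rVert = \lVert q(a) \rVert$. Finally, for $a \in I$ one has $(1-p_n) a \to 0$, so $\Phi_n(a) \to 0$, i.e. $\Phi_n$ asymptotically annihilates $I$ and therefore depends, up to a vanishing error, only on $q(a)$. I would also record here that since $A$ is nuclear its quotient $A/I$ is nuclear, so by the Choi--Effros lifting theorem there is a completely positive contractive section $s : A/I \to A$ with $q \circ s = \mathrm{id}_{A/I}$.

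Second, I would feed the compressions into quasidiagonality of $A$. Given finite $G \subset A/I$ and $\epsilon > 0$, fix $n$ so large that on the finite set $s(G)$ the compression $\Phi_n$ has multiplicativity and quotient-isometry defects below $\epsilon/3$; then apply quasidiagonality of $A$ to the finite set $\Phi_n(s(G)) \cup \{1 - p_n\}$ to obtain a ucp $\phi : A \to M_k(\mathbb{C})$ that is approximately multiplicative and isometric there. Since $1-p_n$ is a projection and $\phi$ is approximately multiplicative, $\phi(1-p_n)$ is close to a genuine projection $e \in M_k(\mathbb{C})$; compressing by $e$ into $e M_k(\mathbb{C}) e \cong M_{k'}(\mathbb{C})$ and composing yields $\psi := (e\,(\cdot)\,e) \circ \phi \circ \Phi_n \circ s : A/I \to M_{k'}(\mathbb{C})$. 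This $\psi$ is a composition of completely positive contractions, hence cp; it is approximately unital because $\Phi_n(s(1)) \approx 1-p_n$ and $\phi(1-p_n) \approx e$; and the two estimates above, combined with the isometry of $\phi$ on $\Phi_n(s(G))$ and $\Phi_n(a) = (1-p_n)\Phi_n(a)(1-p_n)$, give approximate multiplicativity and approximate isometry on $G$. This exhibits the matrix approximations required for $A/I$.

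The main obstacle is the simultaneous bookkeeping of the two layers of approximation: the compression $\Phi_n$ carries errors controlled by the commutators $[p_n, \cdot]$ and the approximate-unit property, while $\phi$ carries its own multiplicativity and isometry defects, so one must quantify over $n$ \emph{first} and only then choose $\phi$ fine enough on the now-fixed finite set, arranging that the defects sum to less than $\epsilon$. The subtler structural point, and where nuclearity is genuinely used, is producing an honest cp map defined on all of $A/I$ rather than a merely set-theoretic choice of lifts: this is exactly what the ccp section $s$ from the Choi--Effros theorem provides, and it is available precisely because the quotient of the nuclear algebra $A$ is again nuclear.
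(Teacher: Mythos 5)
The paper itself offers no proof of this statement: it is quoted directly, with citation, from Brown's survey \cite{Brown} (Proposition 8.3), so there is no internal argument to compare yours against. Your proof is correct modulo the bookkeeping you explicitly flag, and it is essentially the argument of the cited source itself --- quasicentral corner compressions $\Phi_n = (1-p_n)(\cdot)(1-p_n)$, a ucp section $s$ of the quotient map obtained from Choi--Effros via nuclearity of $A/I$, composition with ucp matrix approximations witnessing quasidiagonality of $A$, and a final compression by a projection close to $\phi(1-p_n)$ --- the only unstated hypothesis you rely on being separability (needed both for the sequential characterization of quasidiagonality and for the lifting theorem), which is harmless in this paper's application to $\ghani_q^{\mathbb{T}}$.
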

\begin{theorem}\label{quasi}
$\oni_q^\mathbb{T}$ is quasidiagonal.
\end{theorem}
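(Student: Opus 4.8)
The strategy is to present $\oni_q^{\mathbb{T}}$ as a quotient of the quasidiagonal algebra $\ghani_q^{\mathbb{T}}$ and to invoke Theorem~\ref{quotientQD}. First I would record the algebraic picture. Since the gauge action of the compact group $\mathbb{T}$ leaves the ideal $\mathbb{K}(\mathcal{F}^q)$ invariant (Proposition~\ref{Invariance}), averaging over $\mathbb{T}$ shows that passing to fixed points is exact: given $y \in \oni_q^{\mathbb{T}}$, lift it to some $x \in \ghani_q$ and replace $x$ by $\int_{\mathbb{T}} \gamma_z(x)\, dz \in \ghani_q^{\mathbb{T}}$, which still maps to $y$. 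Hence the quotient map $\ghani_q \to \oni_q$ restricts to a surjection $\ghani_q^{\mathbb{T}} \twoheadrightarrow \oni_q^{\mathbb{T}}$ whose kernel is $I := \mathbb{K}(\mathcal{F}^q) \cap \ghani_q^{\mathbb{T}}$. Thus $\oni_q^{\mathbb{T}} \simeq \ghani_q^{\mathbb{T}} / I$, and it remains to check that the hypotheses of Theorem~\ref{quotientQD} hold for the pair $(\ghani_q^{\mathbb{T}}, I)$.

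Next I would identify $I$ concretely. The unitary $U_z$ implementing the gauge action scales $\mathcal{F}_m^q$ by $z^m$, so a compact operator is $\mathbb{T}$-fixed precisely when it is block diagonal with respect to the grading $\mathcal{F}^q = \bigoplus_{m \geq 0} \mathcal{F}_m^q$; since each $\mathcal{F}_m^q$ is finite dimensional this gives $I = \bigoplus_{m \geq 0} \mathbb{K}(\mathcal{F}_m^q)$, the $c_0$-direct sum of the full matrix algebras $\mathbb{B}(\mathcal{F}_m^q)$, which is AF. The finite-rank projections $P_N = \sum_{m = 0}^N P_{\mathcal{F}_m^q}$ form an approximate unit of $I$ consisting of projections. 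Crucially, every element of $\ghani_q^{\mathbb{T}}$ preserves each $\mathcal{F}_m^q$ (as observed in the proof that $\ghani_q^{\mathbb{T}}$ is RFD), so each $P_N$, being the projection onto a union of grading blocks, in fact commutes with all of $\ghani_q^{\mathbb{T}}$. In particular $\{P_N\}$ is a quasicentral approximate unit of $I$.

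It remains to verify that $\ghani_q^{\mathbb{T}}$ is unital, quasidiagonal and nuclear. Unitality is clear and quasidiagonality is exactly the preceding theorem, since RFD implies quasidiagonal. For nuclearity I would use the extension $0 \to I \to \ghani_q^{\mathbb{T}} \to \oni_q^{\mathbb{T}} \to 0$: the ideal $I$ is AF, the quotient $\oni_q^{\mathbb{T}}$ is nuclear by the Corollary following the flip-approximation theorem, and nuclearity is preserved under extensions, whence $\ghani_q^{\mathbb{T}}$ is nuclear. Applying Theorem~\ref{quotientQD} then yields that $\oni_q^{\mathbb{T}} \simeq \ghani_q^{\mathbb{T}}/I$ is quasidiagonal.

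The argument is almost entirely formal once the particle grading is exploited; the only genuinely substantive inputs are the two facts imported from earlier in the paper, namely quasidiagonality of $\ghani_q^{\mathbb{T}}$ and nuclearity of $\oni_q^{\mathbb{T}}$. The step I expect to be the crux is the verification that the natural approximate unit of $I$ is quasicentral in $\ghani_q^{\mathbb{T}}$; happily, the grading makes these projections exactly central, so no approximation argument is needed and the hypotheses of Theorem~\ref{quotientQD} are met verbatim.
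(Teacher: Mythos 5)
Your proposal is correct and takes essentially the same route as the paper: identify $\oni_q^{\mathbb{T}}$ as the quotient of $\ghani_q^{\mathbb{T}}$ by its ideal of gauge-invariant compacts, note that the projections onto the first $k$ grading components form an approximate unit of projections that is actually central in $\ghani_q^{\mathbb{T}}$, and apply Theorem~\ref{quotientQD} together with the preceding RFD theorem. You are in fact somewhat more careful than the paper, which writes the kernel loosely as $\mathbb{K}(\mathcal{F}^q)$ rather than $\mathbb{K}(\mathcal{F}^q)\cap\ghani_q^{\mathbb{T}}$ and does not explicitly verify the nuclearity hypothesis of Theorem~\ref{quotientQD}, which you supply via the extension $0\to I\to\ghani_q^{\mathbb{T}}\to\oni_q^{\mathbb{T}}\to 0$ with $I$ AF and $\oni_q^{\mathbb{T}}$ nuclear.
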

\begin{proof}
Since the quotient mapping $\ghani_q \rightarrow \oni_q$ is $\mathbb{T}$-equivariant, \[ \oni_q^\mathbb{T} \simeq (\ghani_q / \mathbb{K}(\mathcal{F}^q))^\mathbb{T} \simeq \ghani_q^\mathbb{T} / \mathbb{K}(\mathcal{F}^q). \] Projections on the first $k$ components of $\mathcal{F}^q$ is an approximate unit of $\mathbb{K}(\mathcal{F}^q)$ which is central in $\ghani_q^\mathbb{T}$. Since $\ghani_q^\mathbb{T}$ is quasidiagonal, $\oni_q^\mathbb{T}$ is quasidiagonal by Theorem \ref{quotientQD}.
\end{proof}

\section{$\oni_q^\mathbb{T} \simeq \oni_0^\mathbb{T}$}

\begin{proposition}[\cite{Wegge}, Lemma L.1.4]
Let $\{A_i, \alpha_{ij} \}$ be a directed family (of $C^*$-algebras) where all the objects are isomorphic to an object $A$ with isomorphisms making the following diagram commutative
\[
\begin{tikzcd}
A_j \arrow[rd, "\simeq"] \arrow[rr, "\alpha_{ij}"] & & A_i \\
 & A \arrow[ur, "\simeq"] &
\end{tikzcd}
\]
Then
\[ \varinjlim (A_i, \alpha_{ij}) \simeq A. \]
\end{proposition}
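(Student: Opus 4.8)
The plan is to realize $A$ as the inductive limit directly, via the universal property. Denote by $\theta_i \colon A_i \to A$ the isomorphism that appears as the diagonal arrow out of $A_i$, so that commutativity of the triangle records the single relation $\theta_i \circ \alpha_{ij} = \theta_j$ for every comparable pair of indices. Let $B = \varinjlim(A_i, \alpha_{ij})$ be the $C^*$-algebraic inductive limit, with canonical $*$-homomorphisms $\beta_i \colon A_i \to B$ satisfying $\beta_i \circ \alpha_{ij} = \beta_j$ and with dense union $\bigcup_i \beta_i(A_i)$.

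First I would note that the relation $\theta_i \circ \alpha_{ij} = \theta_j$ says precisely that $(A, \{\theta_i\})$ is a cocone over the system. Hence the universal property of the inductive limit produces a unique $*$-homomorphism $\Theta \colon B \to A$ with $\Theta \circ \beta_i = \theta_i$ for all $i$. Surjectivity of $\Theta$ is then immediate, since each $\theta_i = \Theta \circ \beta_i$ is already onto.

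Next I would construct the inverse explicitly. Put $\Phi_i := \beta_i \circ \theta_i^{-1} \colon A \to B$. From $\beta_j = \beta_i \circ \alpha_{ij} = \beta_i \circ \theta_i^{-1} \circ \theta_j$ one gets $\Phi_j = \beta_j \circ \theta_j^{-1} = \beta_i \circ \theta_i^{-1} = \Phi_i$, so all the $\Phi_i$ coincide and assemble into a single $*$-homomorphism $\Phi \colon A \to B$. A direct check gives $\Theta \circ \Phi = \theta_i \circ \theta_i^{-1} = \mathrm{id}_A$, and $\Phi \circ \Theta \circ \beta_i = \beta_i \circ \theta_i^{-1} \circ \theta_i = \beta_i$; since $\bigcup_i \beta_i(A_i)$ is dense in $B$ and both $\Phi \circ \Theta$ and $\mathrm{id}_B$ are continuous, they agree on all of $B$. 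Thus $\Theta$ is a $*$-isomorphism and $\varinjlim(A_i, \alpha_{ij}) \simeq A$.

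I do not expect a genuine obstacle: this is a soft consequence of the universal property, and the whole argument is bookkeeping around the triangle relation $\theta_i \circ \alpha_{ij} = \theta_j$. The only points needing care are fixing the direction conventions for the connecting maps and checking that $\Phi$ is independent of the index $i$; once the cocone relation is recorded correctly both are routine. If one prefers to avoid constructing $\Phi$, one may instead observe that each $\alpha_{ij}$ and each $\theta_i$ is isometric, so that $\| \beta_i(a) \| = \| a \| = \| \theta_i(a) \|$ on the dense subalgebra; this makes $\Theta$ isometric, hence injective, and together with surjectivity again yields the isomorphism.
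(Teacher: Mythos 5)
Your proof is correct. Note that the paper does not prove this proposition at all: it is quoted as Lemma L.1.4 of Wegge-Olsen and used as a black box, so there is no in-paper argument to compare against; your universal-property argument (cocone $\theta_i$ induces $\Theta$, explicit inverse $\Phi = \beta_i \circ \theta_i^{-1}$, density plus continuity) is the standard proof and supplies exactly what the citation hides. One cosmetic point: your computation gives $\Phi_j = \Phi_i$ only for comparable indices $j \le i$, so to conclude that all $\Phi_i$ coincide you should invoke directedness of the index set (pass to a common upper bound), which is immediate.
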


\begin{proposition}
Let $\{A_n, \alpha_{nm} \}$ and $\{B_n, \beta_{nm}\}$ be directed families (of $C^*$-algebras). Assume there exists a sequence of isomorphisms $\lambda_n : A_n \rightarrow B_n$ making the following diagram commutative
\[
\begin{tikzcd}
A_{1} \arrow[r, "\alpha_1"] \arrow[d,"\lambda_1"] & A_2 \arrow[d, "\lambda_2"] \arrow[r, "\alpha_2"] & A_{3} \arrow[d, "\lambda_3"] \arrow[r, "\alpha_3"]  & \ldots \\
B_{1} \arrow[r, "\beta_1"] & B_{2} \arrow[r, "\beta_2"] & B_3 \arrow[r, "\beta_3"] & \ldots
\end{tikzcd}
\]
Then 
\[ \varinjlim (A_n, \alpha_{nm}) \simeq \varinjlim (B_n, \beta_{nm}). \]
\end{proposition}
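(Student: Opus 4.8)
The plan is to invoke the universal property of the $C^*$-algebraic inductive limit and show that the ladder of isomorphisms $\{\lambda_n\}$ induces mutually inverse $*$-homomorphisms between the two limits. Write $\mu_n : A_n \to \varinjlim(A_n, \alpha_{nm})$ and $\nu_n : B_n \to \varinjlim(B_n, \beta_{nm})$ for the canonical structure maps, so that $\mu_{n+1} \circ \alpha_n = \mu_n$ and $\nu_{n+1} \circ \beta_n = \nu_n$, and recall that $\bigcup_n \mu_n(A_n)$ is dense in $\varinjlim A_n$ and likewise $\bigcup_n \nu_n(B_n)$ is dense in $\varinjlim B_n$.

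First I would record that $\{\lambda_n\}$ is a morphism of inductive systems, which is precisely the commutativity of the given ladder, $\lambda_{n+1} \circ \alpha_n = \beta_n \circ \lambda_n$. Then the composites $\nu_n \circ \lambda_n : A_n \to \varinjlim B_n$ are compatible with the connecting maps on the $A$-side, since
\[ \nu_{n+1} \circ \lambda_{n+1} \circ \alpha_n = \nu_{n+1} \circ \beta_n \circ \lambda_n = \nu_n \circ \lambda_n. \]
By the universal property of $\varinjlim A_n$ there is a unique $*$-homomorphism $\Lambda : \varinjlim A_n \to \varinjlim B_n$ with $\Lambda \circ \mu_n = \nu_n \circ \lambda_n$ for all $n$.

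Because each $\lambda_n$ is an isomorphism, the inverse family $\{\lambda_n^{-1}\}$ is again a morphism of inductive systems: applying $\lambda_{n+1}^{-1}$ on the left and $\lambda_n^{-1}$ on the right of $\lambda_{n+1}\circ\alpha_n = \beta_n\circ\lambda_n$ yields $\alpha_n \circ \lambda_n^{-1} = \lambda_{n+1}^{-1}\circ\beta_n$. Hence the same construction produces $\Gamma : \varinjlim B_n \to \varinjlim A_n$ with $\Gamma \circ \nu_n = \mu_n \circ \lambda_n^{-1}$. The composites $\Gamma \circ \Lambda$ and $\Lambda \circ \Gamma$ then agree with the identity on the dense subsets $\bigcup_n \mu_n(A_n)$ and $\bigcup_n \nu_n(B_n)$ respectively; indeed $\Gamma \circ \Lambda \circ \mu_n = \Gamma \circ \nu_n \circ \lambda_n = \mu_n \circ \lambda_n^{-1}\circ \lambda_n = \mu_n$, and symmetrically on the $B$-side. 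Since $\Gamma \circ \Lambda$ and $\Lambda \circ \Gamma$ are continuous, being $*$-homomorphisms, they equal the identity maps, so $\Lambda$ is a $*$-isomorphism.

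The argument is routine functoriality of the inductive limit, so there is no genuine obstacle; the only points requiring a word of care are that the structure maps $\alpha_n$, $\beta_n$ need not be injective, so $\varinjlim A_n$ is the completion of the algebraic inductive limit and one argues with the dense union $\bigcup_n \mu_n(A_n)$ rather than with an increasing union of subalgebras, and that $\Lambda$, $\Gamma$ are contractive as $*$-homomorphisms, which is exactly what allows the identities on these dense subsets to pass to the completions. Alternatively, one can bypass the universal property and define $\Lambda$ directly on a representative by $\Lambda(\mu_n(a)) = \nu_n(\lambda_n(a))$: well-definedness is checked against the relation $\mu_{n+1}\circ\alpha_n = \mu_n$, the map is seen to be isometric because each $\lambda_n$ is isometric as a $C^*$-isomorphism, and it extends by continuity to the required isomorphism.
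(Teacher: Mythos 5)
Your proof is correct. The paper actually states this proposition with no proof at all --- it is treated as routine functoriality of the inductive limit, alongside the preceding citation of Lemma L.1.4 of Wegge-Olsen --- so your universal-property construction of $\Lambda$ and $\Gamma$ and the density-plus-continuity argument showing they are mutually inverse is precisely the standard argument the paper implicitly relies on, with the relevant care points (possibly non-injective connecting maps, passing identities from dense subsets to the completion) handled correctly.
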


\begin{lemma}
$\oni_q^\mathbb{T} \simeq M_n \otimes \oni_q^\mathbb{T}$.
\end{lemma}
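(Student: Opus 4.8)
The plan is to produce genuine Cuntz isometries of gauge degree one inside $\oni_q$ and use them to absorb a matrix factor directly, rather than trying to manipulate the $q$-deformed generators $\pi(L_i^q)$, whose relations $\pi(L_i^q)^*\pi(L_j^q) = \delta_{ij} + q\,\pi(L_j^q)\pi(L_i^q)^*$ are not the Cuntz relations. The point is that once we have honest isometries $s_1,\dots,s_n$ with orthogonal ranges summing to $1$ and of gauge degree one, the isomorphism $\oni_q^{\mathbb{T}} \simeq M_n \otimes \oni_q^{\mathbb{T}}$ is the standard matrix-absorption argument.

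First I would invoke the $\mathbb{T}$-equivariant inclusion $\mathcal{O}_n \subset \oni_q$ established above and fix the images $s_1, \ldots, s_n \in \oni_q$ of the canonical Cuntz generators. They satisfy $s_i^* s_j = \delta_{ij}$ and $\sum_{i=1}^n s_i s_i^* = 1$, and since the inclusion is $\mathbb{T}$-equivariant and the generators of $\mathcal{O}_n$ carry gauge degree one, we obtain $\gamma_z(s_i) = z\, s_i$ for all $z \in \mathbb{T}$. Consequently $s_i\, \oni_q^{\mathbb{T}}\, s_j^*$ and $s_i^*\, \oni_q^{\mathbb{T}}\, s_j$ are gauge-invariant, hence contained in $\oni_q^{\mathbb{T}}$. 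Next I would define $\Phi : M_n \otimes \oni_q^{\mathbb{T}} \to \oni_q^{\mathbb{T}}$ on matrix units by $\Phi(e_{ij} \otimes x) = s_i\, x\, s_j^*$. The relation $s_j^* s_k = \delta_{jk}$ makes $\Phi$ a $*$-homomorphism, and the degree computation shows its range lies in $\oni_q^{\mathbb{T}}$. For injectivity I would use that $\oni_q^{\mathbb{T}}$ is simple (shown above), so $M_n \otimes \oni_q^{\mathbb{T}}$ is simple and the nonzero homomorphism $\Phi$ (note $s_1 x s_1^* \neq 0$ for $x \neq 0$, since $s_1^*(s_1 x s_1^*)s_1 = x$) is automatically injective.

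For surjectivity I would use the Cuntz relation $\sum_i s_i s_i^* = 1$: any $y \in \oni_q^{\mathbb{T}}$ satisfies
\[ y = \Big(\sum_i s_i s_i^*\Big) y \Big(\sum_j s_j s_j^*\Big) = \sum_{i,j} s_i\,(s_i^* y s_j)\, s_j^* = \Phi\Big(\sum_{i,j} e_{ij} \otimes s_i^* y s_j\Big), \]
and each $s_i^* y s_j$ lies in $\oni_q^{\mathbb{T}}$ by the degree computation, so $y$ is in the image. Hence $\Phi$ is the desired isomorphism, giving $\oni_q^{\mathbb{T}} \simeq M_n \otimes \oni_q^{\mathbb{T}}$.

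The conceptual heart of the argument is the existence of the degree-one Cuntz isometries $s_i$; everything afterwards is routine. Thus the only genuine obstacle is already resolved by the cited $\mathbb{T}$-equivariant inclusion. Should one wish to avoid that citation, the $s_i$ can be produced by hand as the partial-isometry part of the polar decomposition of the row creation operator $L = (L_1^q, \ldots, L_n^q) : \mathbb{C}^n \otimes \mathcal{F}^q \to \mathcal{F}^q$, $L(e_i \otimes \xi) = L_i^q\xi$: since $L L^* = \rho_L$ is bounded below off the vacuum while $L^* L = 1 + q\sum_{i,j} e_{ij} \otimes L_j^q (L_i^q)^*$ is invertible, the operator $W := L\,(L^*L)^{-1/2}$ is an isometry onto $\mathcal{F}^q \ominus \mathbb{C}\Omega$ intertwining the gauge actions, and $T_i := W(e_i \otimes 1) = \sum_k L_k^q\, c_{ki} \in \ghani^L_q$ (with $c_{ki}$ the entries of $(L^*L)^{-1/2}$) descend to Cuntz isometries of gauge degree one in $\oni_q^L$. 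Verifying that $W$ and the $T_i$ lie in the correct algebra is the only laborious point of this alternative route.
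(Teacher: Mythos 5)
Your proposal is correct and takes essentially the same approach as the paper: both rest on the degree-one Cuntz isometries $s_1,\dots,s_n$ supplied by the $\mathbb{T}$-equivariant inclusion $\mathcal{O}_n \subset \oni_q$, and your map $\Phi(e_{ij}\otimes x) = s_i x s_j^*$ is precisely the paper's $\lambda^{-1}$, while your surjectivity computation $y = \sum_{i,j} s_i (s_i^* y s_j) s_j^*$ is the paper's $\lambda$. You simply make explicit the homomorphism, injectivity and surjectivity checks that the paper's terse proof leaves to the reader.
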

\begin{proof}
Let $s_1, \ldots, s_n$ be generators of $\mathcal{O}_n \subset^\mathbb{T} \oni_q$. Define $\lambda : \oni_q^\mathbb{T} \rightarrow M_n(\oni_q^\mathbb{T})$
\[ \lambda(a) = \left( \begin{array}{ccc}
    s_1^* a s_1 & \ldots & s_1^* a s_n \\
     & \ldots & \\
    s_n^* a s_1 & \ldots & s_n^* a s_n 
\end{array} \right) \]
and $\lambda^{-1}$
\[ \lambda^{-1}\left( \begin{array}{ccc}
    a_{11} & \ldots & a_{1n} \\
     & \ldots & \\
    a_{n1} & \ldots & a_{nn} 
\end{array} \right) = \sum s_i a_{ij} s_j^*. \]
Since the inclusion of $\mathcal{O}_n$ is $\mathbb{T}$-equivariant, the maps are well-defined.
\end{proof}

Let $\varphi : \oni_q^\mathbb{T} \rightarrow \oni_q^\mathbb{T}$ given by $\varphi(a) = \sum_{i = 1}^n s_i a s_i^*$. Denote $\overline{\oni}_q^\mathbb{T}$ to be the following direct limit:
\[ \oni_q^\mathbb{T} \xrightarrow{\varphi} \oni_q^\mathbb{T} \xrightarrow{\varphi} \oni_q^\mathbb{T} \xrightarrow{\varphi} \ldots \rightarrow \overline{\oni}_q^\mathbb{T}. \]

\begin{theorem}\label{LimIso}
$\overline{\oni}_q^\mathbb{T} \simeq \mathfrak{U}_{n^\infty} \otimes \oni_q^\mathbb{T}$.
\end{theorem}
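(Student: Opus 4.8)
The plan is to exhibit both $\overline{\oni}_q^\mathbb{T}$ and $\mathfrak{U}_{n^\infty} \otimes \oni_q^\mathbb{T}$ as inductive limits of two explicitly isomorphic ladders and then invoke the Proposition above on directed families with commuting vertical isomorphisms. Write $A = \oni_q^\mathbb{T}$ and let $\lambda : A \to M_n \otimes A$ be the isomorphism of the previous Lemma, so that $\lambda(a) = (s_i^* a s_j)_{ij}$ and $\lambda^{-1}((a_{ij})) = \sum_{i,j} s_i a_{ij} s_j^*$. The single computation driving the whole argument is
\[ \lambda(\varphi(a)) = \Big( s_i^* \big(\textstyle\sum_k s_k a s_k^*\big) s_j \Big)_{ij} = (\delta_{ij} a)_{ij} = 1_n \otimes a, \]
which uses only the Cuntz relations $s_i^* s_j = \delta_{ij} 1$ of the $\mathbb{T}$-equivariantly embedded copy $\mathcal{O}_n \subset \oni_q$. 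Thus, under $\lambda$, the endomorphism $\varphi$ becomes the unital diagonal embedding $\iota_A : a \mapsto 1_n \otimes a$.

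Next I realize $\mathfrak{U}_{n^\infty} \otimes A$ as the limit of the bottom row
\[ A \xrightarrow{\beta_0} M_n \otimes A \xrightarrow{\beta_1} M_n^{\otimes 2} \otimes A \xrightarrow{\beta_2} \cdots, \qquad \beta_k(z) = 1_n \otimes z. \]
Since $\mathfrak{U}_{n^\infty} = \varinjlim(M_n^{\otimes k}, z \mapsto 1_n \otimes z)$ and tensoring by the fixed algebra $A$ commutes with inductive limits, this limit is indeed $\mathfrak{U}_{n^\infty} \otimes A$; there is no tensor-norm ambiguity since each $M_n^{\otimes k}$ is finite dimensional.

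Then I build vertical isomorphisms $\mu_k : A \to M_n^{\otimes k} \otimes A$ recursively by $\mu_0 = \id_A$ and $\mu_{k+1} = (\id_{M_n} \otimes \mu_k) \circ \lambda$, so that each $\mu_k$ is an isomorphism and $\mu_1 = \lambda$. The ladder
\[
\begin{tikzcd}
A \arrow[r, "\varphi"] \arrow[d, "\mu_0"] & A \arrow[r, "\varphi"] \arrow[d, "\mu_1"] & A \arrow[r, "\varphi"] \arrow[d, "\mu_2"] & \cdots \\
A \arrow[r, "\beta_0"] & M_n \otimes A \arrow[r, "\beta_1"] & M_n^{\otimes 2} \otimes A \arrow[r, "\beta_2"] & \cdots
\end{tikzcd}
\]
commutes: by the key identity $\lambda \circ \varphi = \iota_A$ one gets $\mu_{k+1} \circ \varphi = (\id_{M_n} \otimes \mu_k)\circ \lambda \circ \varphi = (\id_{M_n} \otimes \mu_k)(1_n \otimes a) = 1_n \otimes \mu_k(a) = \beta_k \circ \mu_k(a)$ for all $a$. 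Applying the Proposition on directed families then gives $\overline{\oni}_q^\mathbb{T} = \varinjlim(A,\varphi) \simeq \varinjlim(M_n^{\otimes k} \otimes A, \beta_k) = \mathfrak{U}_{n^\infty} \otimes \oni_q^\mathbb{T}$.

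The argument is essentially formal once the key identity is in place, so there is no serious obstacle; the only points requiring care are the tensor-factor bookkeeping in the recursion for $\mu_k$ (adjoining each new $M_n$ on the same side as the maps $\beta_k$, so that the squares close) and the interchange of $-\otimes A$ with the inductive limit defining $\mathfrak{U}_{n^\infty}$.
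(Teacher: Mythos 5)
Your proposal is correct and follows essentially the same route as the paper: the same recursion $\mu_{k+1} = (\id_{M_n}\otimes\mu_k)\circ\lambda$ (the paper's $\lambda_{k+1}$), the same commuting ladder against the row of unital embeddings $z \mapsto 1_n \otimes z$ whose limit is $\mathfrak{U}_{n^\infty}\otimes\oni_q^\mathbb{T}$, and the same appeal to the directed-families proposition. The only difference is that you explicitly verify the key identity $\lambda\circ\varphi = 1_n\otimes(\cdot)$ via the Cuntz relations, which the paper merely asserts with ``Notice that $\lambda_{n+1}\circ\varphi = (1\otimes\id)\circ\lambda_n$.''
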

\begin{proof}
Define $\lambda_0 : \oni_q^\mathbb{T} \rightarrow \oni_q^\mathbb{T}$ to be $\id$ and $\lambda_{n + 1} : \oni_q^\mathbb{T} \rightarrow M_{n^k}(\oni_q^\mathbb{T})$ to be $(\id \otimes \lambda_n) \circ \lambda$. Notice that $\lambda_{n+1} \circ \varphi = (1 \otimes \id) \circ \lambda_n$. The following diagram with vertical isomorphisms commute
\[
\begin{tikzcd}
\oni_q^\mathbb{T} \arrow[r, "\varphi"] \arrow[d,"\lambda_0"] & \oni_q^\mathbb{T} \arrow[r, "\varphi"] \arrow[d, "\lambda_1"]  & \oni_q^\mathbb{T} \arrow[r, "\varphi"] \arrow[d, "\lambda_2"] & \ldots \\
\oni_q^\mathbb{T} \arrow[r, "1 \otimes \id"] & M_n(\oni_q^\mathbb{T})  \arrow[r, "1 \otimes \id"] & M_{n^2}(\oni_q^\mathbb{T}) \arrow[r,"1 \otimes \id"] & \ldots
\end{tikzcd}
\]
Top limit is $\overline{\oni}_q^\mathbb{T}$ and bottom limit is $\mathfrak{U}_{n^\infty} \otimes \oni_q^\mathbb{T}$.
\end{proof}

\begin{theorem}[\cite{RosenbergEffros}, Lemma 4.4]\label{LimQuasi}
Suppose $A$ is a direct limit of quasidiagonal $C^*$-algebras. Then $A$ is quasidiagonal.
\end{theorem}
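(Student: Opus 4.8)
The plan is to verify quasidiagonality through Voiculescu's local characterization: a separable $C^*$-algebra $A$ is quasidiagonal precisely when, for every finite set $F \subset A$ and every $\epsilon > 0$, there is a completely positive contractive (c.p.c.) map $\phi : A \to M_n$ (some $n$) which is $\epsilon$-multiplicative, $\lVert \phi(ab) - \phi(a)\phi(b)\rVert < \epsilon$, and almost isometric, $\lVert \phi(a)\rVert > \lVert a \rVert - \epsilon$, on $F$. Since every c.p.c. map automatically satisfies $\lVert \phi(a)\rVert \le \lVert a\rVert$, only the lower norm bound must be produced. All algebras in sight ($\oni_q^{\mathbb{T}}$ and its telescope) are separable, so this applies, and the task reduces to manufacturing one such $\phi$ for each $(F,\epsilon)$; an exhaustion of $A$ by an increasing sequence of finite sets together with a diagonal argument then assembles the required sequence of matrix maps.

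Write $A = \varinjlim(A_i,\alpha_{ij})$ with canonical maps $\alpha_i : A_i \to A$, so $\bigcup_i \alpha_i(A_i)$ is dense. Given $F = \{a_1,\dots,a_m\}$ and $\epsilon$, choose $i$ and $b_s \in A_i$ with $\lVert \alpha_i(b_s) - a_s\rVert < \epsilon/3$, together with lifts of the relevant products. When the connecting maps are \emph{injective} --- which is the case in our application, since $\varphi(a) = \sum_i s_i a s_i^*$ satisfies $s_j^*\,\varphi(a)\,s_k = \delta_{jk}a$ and is therefore injective --- each $A_i$ sits isometrically in $A$, and the argument is immediate: quasidiagonality of $A_i$ supplies a c.p.c. map $\psi : A_i \to M_n$ that is $(\epsilon/3)$-multiplicative and $(\epsilon/3)$-isometric on $\{b_s, b_sb_t\}$, and Arveson's extension theorem (injectivity of $M_n$) extends $\psi$ to a c.p.c. map $\phi : A \to M_n$ with $\phi|_{A_i} = \psi$. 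Then $\phi$ is good on $F$ by the triangle inequality and contractivity.

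For the general statement, where $\alpha_i$ need not be injective, the Arveson step breaks down: $\alpha_i(A_i)$ is a \emph{quotient} of $A_i$, and quasidiagonality does not pass to quotients, so $\psi$ cannot be transported along $\alpha_i$. This is the main obstacle. I would bypass it by working inside a representation rather than extending maps. Fix a faithful representation $\pi : A \to \mathbb{B}(H)$ of infinite multiplicity, so that $\pi(A) \cap \mathcal{K}(H) = \{0\}$, and set $\rho := \pi \circ \alpha_i : A_i \to \mathbb{B}(H)$. Since $\rho(A_i) \subseteq \pi(A)$, also $\rho(A_i) \cap \mathcal{K}(H) = \{0\}$. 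The key input is the classical theorem of Voiculescu: a representation of a quasidiagonal algebra that meets the compacts trivially is itself a quasidiagonal representation. Applied to $\rho$ this yields finite-rank projections $P \nearrow 1$ (strongly) with $\lVert [P, \rho(x)]\rVert \to 0$ for all $x \in A_i$.

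The point is that $\rho$ is computed in the $A$-norm, $\lVert \rho(x)\rVert = \lVert \alpha_i(x)\rVert_A$, so the compression $\phi_P := P\,\pi(\cdot)\,P : A \to P\mathbb{B}(H)P \cong M_{\dim P}$ is c.p.c.\ on all of $A$ and, for $P$ far enough in the net, is $\epsilon$-multiplicative and $\epsilon$-isometric on $F$. Almost-multiplicativity follows from $P\rho(xy)P - P\rho(x)P\rho(y)P = P\rho(x)(1-P)\rho(y)P$, whose norm is at most $\lVert \rho(x)\rVert\,\lVert [P,\rho(y)]\rVert \to 0$; the lower norm bound follows from $P\rho(b_s)P \to \rho(b_s)$ strongly, whence $\lVert P\rho(b_s)P\rVert \to \lVert \rho(b_s)\rVert = \lVert \alpha_i(b_s)\rVert_A \approx \lVert a_s\rVert$, using $\pi(a_s) \approx \rho(b_s)$. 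Since $\lVert [P,\pi(a_s)]\rVert \le \lVert [P,\rho(b_s)]\rVert + 2\lVert \pi(a_s)-\rho(b_s)\rVert$, the same projections nearly commute with $\pi(F)$ as well. This produces the desired $\phi = \phi_P$; running the construction over an exhausting sequence of finite sets in the fixed Hilbert space $H$ gives a single sequence of projections witnessing that $\pi$ is a quasidiagonal representation of $A$, and hence that $A$ is quasidiagonal. Essentially all of the work is carried by Voiculescu's quasidiagonal-representation theorem, which is where I expect the only real difficulty to lie.
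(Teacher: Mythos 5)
The paper itself does not prove this statement---it is imported from Effros--Rosenberg \cite{RosenbergEffros}---and in the only place it is used, the telescope $\oni_q^{\mathbb{T}} \xrightarrow{\varphi} \oni_q^{\mathbb{T}} \xrightarrow{\varphi} \cdots$ with $\varphi(a) = \sum_i s_i a s_i^*$, the connecting map is injective (as you observe, $s_j^*\varphi(a)s_j = a$). Your second paragraph---Voiculescu's local characterization, lifting the finite set to some $A_i$, and Arveson-extending the almost multiplicative, almost isometric map $\psi : A_i \to M_n$ along the isometric inclusion $A_i \hookrightarrow A$---is the standard, correct proof in exactly that case, so everything the paper actually needs is covered by your argument.

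Your third paragraph, however, has a genuine gap. The ``key input'' you invoke---that a representation of a quasidiagonal algebra meeting the compacts trivially is a quasidiagonal representation---is Voiculescu's theorem only for \emph{faithful} representations; faithfulness is not a technicality but the whole content. For non-injective $\alpha_i$ the representation $\rho = \pi \circ \alpha_i$ factors through $A_i/\ker\alpha_i \cong \alpha_i(A_i)$, and $\rho$ is a quasidiagonal representation if and only if that quotient is a quasidiagonal algebra---which is precisely the obstacle you had just identified (quasidiagonality does not pass to quotients). So the proposed bypass is circular: it assumes quasidiagonality of exactly the quotient whose possible failure of quasidiagonality is the problem. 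The claim is also concretely false as stated: take $A_i = C_0((0,1]) \otimes \mathcal{O}_2$, the cone over $\mathcal{O}_2$, which is quasidiagonal because it is contractible and quasidiagonality is homotopy invariant (Voiculescu); take $\alpha_i$ to be evaluation at $1$ onto $\mathcal{O}_2$, and $\pi$ a faithful representation of $\mathcal{O}_2$ whose image contains no nonzero compacts. Then $\rho = \pi\circ\alpha_i$ meets $\mathbb{K}(H)$ trivially, yet no finite-rank projections $P \nearrow 1$ can satisfy $\lVert [P,\rho(x)]\rVert \to 0$ for all $x \in A_i$: that would make $\rho(A_i) = \pi(\mathcal{O}_2)$ a quasidiagonal set of operators, hence $\mathcal{O}_2$ quasidiagonal, contradicting the stable finiteness of quasidiagonal $C^*$-algebras. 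Note, finally, that the general statement is \emph{equivalent} to quasidiagonality of the quotients $\alpha_i(A_i)$: they embed into $A$, so they are quasidiagonal whenever $A$ is, and conversely $A$ is the increasing union of them. Your argument therefore settles the injective case (all the paper requires) and leaves the general case exactly where it started.
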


\begin{theorem}[\cite{RosenbergEffros}, Corollary 2.4]\label{TensorFlip}
Suppose $A$ and $B$ have an approximately inner flip. Then $A \otimes B$ has an approximately inner flip.
\end{theorem}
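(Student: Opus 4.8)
The plan is to reduce the flip on $(A \otimes B) \otimes (A \otimes B)$ to the two given flips on $A \otimes A$ and $B \otimes B$ separately, by regrouping the fourfold tensor product. First I would record the canonical reshuffling isomorphism
\[ \theta : (A \otimes B) \otimes (A \otimes B) \xrightarrow{\ \simeq\ } (A \otimes A) \otimes (B \otimes B), \quad a_1 \otimes b_1 \otimes a_2 \otimes b_2 \mapsto (a_1 \otimes a_2) \otimes (b_1 \otimes b_2), \]
which is an honest $*$-isomorphism of minimal tensor products, since reassociating and permuting tensor legs is isometric for $\otimes_{min}$. The key point is that under $\theta$ the flip $\sigma_{A \otimes B}$ on the left is carried to $\sigma_A \otimes \sigma_B$ on the right, where $\sigma_A$ and $\sigma_B$ denote the flips on $A \otimes A$ and $B \otimes B$. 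This is checked on elementary tensors: $\sigma_{A\otimes B}$ sends $a_1 \otimes b_1 \otimes a_2 \otimes b_2$ to $a_2 \otimes b_2 \otimes a_1 \otimes b_1$, whose image under $\theta$ is $(a_2 \otimes a_1) \otimes (b_2 \otimes b_1) = (\sigma_A \otimes \sigma_B)(\theta(a_1 \otimes b_1 \otimes a_2 \otimes b_2))$. Thus it suffices to produce a sequence of unitaries in $(A \otimes A) \otimes (B \otimes B)$ approximately implementing $\sigma_A \otimes \sigma_B$.

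Next I would assemble such a sequence from the hypotheses. Let $(u_k) \subset A \otimes A$ and $(v_k) \subset B \otimes B$ be unitaries with $\| u_k z u_k^* - \sigma_A(z) \| \to 0$ for all $z \in A \otimes A$ and $\| v_k w v_k^* - \sigma_B(w) \| \to 0$ for all $w \in B \otimes B$. Set $w_k := u_k \otimes v_k$, a unitary in $(A \otimes A) \otimes (B \otimes B)$. On an elementary tensor $z \otimes w$ one has $w_k (z \otimes w) w_k^* = (u_k z u_k^*) \otimes (v_k w v_k^*)$, and the elementary estimate
\[ \| p \otimes q - p' \otimes q' \| \leq \| p - p' \| \, \| q \| + \| p' \| \, \| q - q' \| \]
applied with $p = u_k z u_k^*$, $p' = \sigma_A(z)$, $q = v_k w v_k^*$, $q' = \sigma_B(w)$ shows that $\| w_k (z \otimes w) w_k^* - (\sigma_A \otimes \sigma_B)(z \otimes w) \| \to 0$, because $\|q\| = \|w\|$ and $\|p'\| = \|z\|$ stay bounded while $\|p - p'\|, \|q - q'\| \to 0$. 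By linearity the same convergence holds for every finite sum of elementary tensors.

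Finally I would upgrade pointwise convergence on the dense span of elementary tensors to all of $(A \otimes A) \otimes (B \otimes B)$; this is the only point requiring care, and it is mild. Since each $w_k$ is unitary, the maps $x \mapsto w_k x w_k^*$ and $\sigma_A \otimes \sigma_B$ are all isometric, hence uniformly equicontinuous in $x$. Given $x$ and $\epsilon > 0$, choose a finite sum $y$ of elementary tensors with $\|x - y\| < \epsilon / 3$; then
\[ \| w_k x w_k^* - (\sigma_A \otimes \sigma_B)(x) \| \leq 2 \| x - y \| + \| w_k y w_k^* - (\sigma_A \otimes \sigma_B)(y) \|, \]
and the last term is $< \epsilon/3$ for $k$ large, giving $< \epsilon$. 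Transporting $(w_k)$ back through $\theta$ then yields unitaries in $(A \otimes B) \otimes (A \otimes B)$ that implement the approximately inner flip of $A \otimes B$. The conceptual content is entirely in the flip factorization of the first paragraph; the remaining estimates are routine, so I do not expect a genuine obstacle beyond the standard density argument just indicated.
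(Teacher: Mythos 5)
Your proof is correct. Note, however, that the paper does not prove this statement at all: it is imported verbatim as Corollary 2.4 of the cited Effros--Rosenberg paper, so there is no in-paper argument to compare against. Your argument --- regrouping $(A \otimes B) \otimes (A \otimes B) \simeq (A \otimes A) \otimes (B \otimes B)$ via the leg-permuting isomorphism (legitimate for $\otimes_{\min}$, e.g.\ implemented spatially by a unitary permutation of Hilbert-space factors), observing that the flip transports to $\sigma_A \otimes \sigma_B$, taking $w_k = u_k \otimes v_k$, and closing with the cross-norm estimate plus the standard equicontinuity/density step --- is exactly the classical proof of that corollary, so you have in effect reconstructed the reference's argument rather than found a new route. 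The only hypothesis worth flagging is that the definition in the paper asks for unitaries in $A \otimes_{\min} A$ itself, which is unproblematic here since all algebras in play are unital; in the non-unital setting one would pass to multiplier algebras.
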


\begin{theorem}
$\overline{\oni}_q^\mathbb{T} \simeq \mathfrak{U}_{n^\infty}$
\end{theorem}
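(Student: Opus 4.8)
The plan is to verify that $A := \overline{\oni}_q^\mathbb{T}$ satisfies the three conditions of Theorem \ref{RosenbergUHF}, from which the isomorphism $\overline{\oni}_q^\mathbb{T} \simeq \mathfrak{U}_{n^\infty}$ follows at once. By Theorem \ref{LimIso} we already have the concrete description $A \simeq \mathfrak{U}_{n^\infty} \otimes \oni_q^\mathbb{T}$, and this is the form in which I would check the hypotheses. Note that $\oni_q^\mathbb{T}$ is nuclear (as recorded after the flip theorem) and $\mathfrak{U}_{n^\infty}$ is nuclear, so all tensor products appearing below are unambiguous.

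First I would verify condition (3), the $\mathfrak{U}_{n^\infty}$-absorption. Using Theorem \ref{LimIso} together with the classical facts $\mathfrak{U}_{n^\infty} \otimes \mathfrak{U}_{n^\infty} \simeq \mathfrak{U}_{n^\infty}$ and associativity and commutativity of the minimal tensor product,
\[ A \otimes \mathfrak{U}_{n^\infty} \simeq \mathfrak{U}_{n^\infty} \otimes \oni_q^\mathbb{T} \otimes \mathfrak{U}_{n^\infty} \simeq \mathfrak{U}_{n^\infty} \otimes \oni_q^\mathbb{T} \simeq A, \]
so condition (3) holds.

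Next I would check condition (1), approximately inner flip. We have already shown that $\oni_q^\mathbb{T}$ has approximately inner flip, and $\mathfrak{U}_{n^\infty}$ has approximately inner flip (a standard property of UHF algebras). By Theorem \ref{TensorFlip} the tensor product $\mathfrak{U}_{n^\infty} \otimes \oni_q^\mathbb{T} \simeq A$ therefore inherits approximately inner flip. For condition (2) I would exhibit an asymptotic imbedding of $A$ in $\mathfrak{U}_{n^\infty}$. By Theorem \ref{quasi} the building block $\oni_q^\mathbb{T}$ is quasidiagonal, and $A = \overline{\oni}_q^\mathbb{T}$ is by construction the direct limit of copies of $\oni_q^\mathbb{T}$ along $\varphi$; hence $A$ is quasidiagonal by Theorem \ref{LimQuasi}. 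The connecting map $\varphi(a) = \sum_{i=1}^n s_i a s_i^*$ is unital because $\sum_{i=1}^n s_i s_i^* = 1$ in $\mathcal{O}_n$, so $A$ is unital. The criterion of \cite{RosenbergEffros}, Lemma 4.1, that every unital quasidiagonal $C^*$-algebra admits an asymptotic imbedding in $\mathfrak{U}_{n^\infty}$, then gives condition (2).

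With all three conditions of Theorem \ref{RosenbergUHF} verified, we conclude $\overline{\oni}_q^\mathbb{T} \simeq \mathfrak{U}_{n^\infty}$. The argument is essentially bookkeeping, since the substantive inputs — approximate innerness of the flip, quasidiagonality, and the direct-limit identification — were all established earlier. I do not expect a genuine obstacle here; the only conceptual point worth emphasizing is that passing to the direct limit $\overline{\oni}_q^\mathbb{T}$ is precisely the device that supplies the $\mathfrak{U}_{n^\infty}$-absorption of condition (3), which $\oni_q^\mathbb{T}$ itself is not yet known to enjoy, while Theorems \ref{LimQuasi} and \ref{TensorFlip} ensure that the remaining two conditions are not destroyed in the limit.
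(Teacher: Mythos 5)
Your proposal is correct and follows essentially the same route as the paper: both verify the three conditions of Theorem \ref{RosenbergUHF} by combining Theorem \ref{LimIso} with $\mathfrak{U}_{n^\infty}\otimes\mathfrak{U}_{n^\infty}\simeq\mathfrak{U}_{n^\infty}$ for absorption, Theorem \ref{TensorFlip} with the flip results for $\oni_q^\mathbb{T}$ and $\mathfrak{U}_{n^\infty}\simeq\oni_0^\mathbb{T}$ for approximately inner flip, and Theorems \ref{quasi} and \ref{LimQuasi} (via the Effros--Rosenberg quasidiagonality criterion) for the asymptotic imbedding. Your added remarks on nuclearity and on unitality of $\varphi$ in the quotient are harmless explicit touches the paper leaves implicit.
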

\begin{proof}
Theorem \ref{LimQuasi} and \ref{quasi} imply that $\overline{\oni}_q^\mathbb{T}$ has an asymptotic embedding in $\mathfrak{U}_{n^\infty}$. Since $\oni_q^\mathbb{T}$ and $\mathfrak{U}_{n^\infty} \simeq \oni_0^\mathbb{T}$ have approximately inner flip, \ref{LimIso} and \ref{TensorFlip} imply that $\overline{\oni}_q^\mathbb{T}$ has an approximately inner flip. Also \ref{LimIso} implies that
\[\mathfrak{U}_{n^\infty} \otimes \overline{\oni}_q^\mathbb{T} \simeq  (\mathfrak{U}_{n^\infty} \otimes \mathfrak{U}_{n^\infty}) \otimes \oni_q^\mathbb{T} \simeq \mathfrak{U}_{n^\infty} \otimes \oni_q^\mathbb{T} \simeq \overline{\oni}_q^\mathbb{T}. \]
Thus by Theorem \ref{RosenbergUHF},
$\overline{\oni}_q^\mathbb{T} \simeq \mathfrak{U}_{n^\infty}$.
\end{proof}

\begin{definition}
Let $B, A \subset \mathbb{B}(\mathcal{H})$ be $C^*$-algebras. We say that $A \subset_\gamma B$ if for every $a \in A$ such that $\lVert a \rVert \leq 1$ there is $b \in B$ such that $\lVert b - a \rVert < \gamma$.
\end{definition}

\begin{theorem}[\cite{Christensen}, Theorem 6.10]\label{approximation}
Let $A \subset_\gamma B$, let $\eta = 2(n+1)(2\gamma + \gamma^2)(2 + 2\gamma + \gamma^2)$, and suppose that $A$ is separable with nuclear
dimension at most $n$. If $\eta < 1/210000$, then $A$ embeds into $B$. Moreover, for each finite subset $X$ of the unit ball of $A$, there exists an embedding $\theta : A\rightarrow B$ with $\lVert \theta(x) - x \rVert \leq 20 \gamma^{\frac{1}{2}}, \ x \in X$.
\end{theorem}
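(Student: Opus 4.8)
The plan is to turn the finite nuclear dimension of $A$ into finite-dimensional completely positive approximations, transport those approximations into $B$ using the near inclusion, and then upgrade the resulting almost-multiplicative maps into a genuine embedding. First I would record what nuclear dimension at most $n$ provides: for every finite set $X \subset A$ and every tolerance $\epsilon$ there are a finite-dimensional $C^*$-algebra $F$, a contractive completely positive map $\psi : A \to F$, and a completely positive map $\phi : F \to A$ admitting a decomposition $\phi = \phi^{(0)} + \cdots + \phi^{(n)}$ into $n+1$ contractive order-zero maps, with $\lVert \phi\psi(x) - x \rVert < \epsilon$ for $x \in X$. The $n+1$ colours here are precisely the origin of the factor $n+1$ in $\eta$, so I would track them carefully throughout.

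Next I would push this approximation into $B$. By the Winter--Zacharias structure theorem each order-zero map $\phi^{(i)}$ factors as the product of a $*$-homomorphism on $F$ with a commuting positive contraction, so it is determined by the images of finitely many matrix units of $F$. Because $A \subset_\gamma B$, each such image lies within $\gamma$ of $B$, and I would use a standard perturbation of the (finite-dimensional) $*$-homomorphic part to replace $\phi^{(i)}$ by an order-zero map $\tilde\phi^{(i)} : F \to B$ whose distance from $\phi^{(i)}$ is controlled by $\gamma$. Summing the colours and precomposing with $\psi$ yields a completely positive map $\Phi := \bigl(\sum_{i} \tilde\phi^{(i)}\bigr)\circ\psi : A \to B$ that is close to the ambient inclusion $A \hookrightarrow \mathbb{B}(\mathcal{H})$ on $X$ and is approximately multiplicative, with multiplicative defect bounded by $\eta = 2(n+1)(2\gamma+\gamma^2)(2+2\gamma+\gamma^2)$: the factor $n+1$ counts the colours, the leading $2$ arises from passing to self-adjoint parts, and the two polynomial factors come from the elementary estimates $(1+\gamma)^2 - 1 = 2\gamma + \gamma^2$ and $1+(1+\gamma)^2 = 2 + 2\gamma + \gamma^2$.

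The final and decisive step is to convert the approximately multiplicative, approximately isometric map $\Phi$ into an honest $*$-homomorphism. Using separability I would iterate the construction to obtain a sequence $\Phi_k : A \to B$ whose multiplicative and self-adjointness defects tend to zero while each remains within $O(\gamma^{1/2})$ of the inclusion on an exhausting sequence of finite sets, and then run a one-sided approximate-intertwining argument (equivalently, a stability principle for almost $*$-homomorphisms) to extract a limit $\theta : A \to B$. The threshold $\eta < 1/210000$ is exactly the quantitative condition guaranteeing that the accumulated perturbations remain inside the regime in which this rounding-off procedure is valid and injective, and it is what produces the explicit estimate $\lVert \theta(x) - x \rVert \le 20\gamma^{1/2}$ on $X$.

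The hard part will be this last step. Everything preceding it is careful bookkeeping of perturbations of order-zero maps, where finite nuclear dimension enters only to bound the number of colours and hence the total error by $\eta$. The genuine difficulty is the stability statement that upgrades completely positive maps which are simultaneously almost multiplicative and uniformly close to a fixed inclusion into an exact, injective homomorphism that stays nearby; this is where the delicate constants $20$ and $1/210000$ are forced, and where finite nuclear dimension is used in an essential, quantitative way.
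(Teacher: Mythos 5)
First, a point of order: the paper does not prove this statement at all. It is quoted verbatim, with attribution, as Theorem 6.10 of the perturbation paper of Christensen, Sinclair, Smith, White and Winter \cite{Christensen}, and is used downstream as a black box (in the proof that $\oni_q^\mathbb{T}$ is AF). So there is no internal proof to compare yours against; what can be judged is whether your sketch would actually establish the cited theorem.

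Your first two steps track the strategy of \cite{Christensen} reasonably well: nuclear dimension at most $n$ gives cpc approximations $\phi \circ \psi$ through finite-dimensional algebras with $\phi$ decomposed into $n+1$ cpc order-zero summands, and each summand can be moved into $B$ with error controlled by $\gamma$ (in \cite{Christensen} this uses that order-zero maps from $F$ are homomorphisms from the cone over $F$, which is projective, so nearby order-zero maps landing exactly in $B$ exist; the outcome is a completely bounded near containment of $A$ in $B$ with constant $\eta$, which is where the colour count $n+1$ enters). The genuine gap is your third step. You appeal to a ``stability principle for almost $*$-homomorphisms'' to round the almost-multiplicative maps $\Phi_k$ off to an embedding, but no such general principle exists: completely positive maps with small multiplicative defect on a separable $C^*$-algebra need not be uniformly close to any $*$-homomorphism (this is a weak stability/semiprojectivity phenomenon and fails in general), so the statement you invoke at the decisive moment is essentially equivalent to the theorem being proved. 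In \cite{Christensen} this passage is the technical heart of the paper: the completely bounded near containment $A \subset_{\mathrm{cb},\,\eta} B$ is fed into a separate embedding theorem for cb-near containments of separable algebras, proved by an intertwining argument resting on the perturbation machinery of the earlier sections, and it is there --- not in the bookkeeping of colours --- that the constants $1/210000$ and $20\gamma^{1/2}$ are actually produced. You correctly flag this step as the hard part, but flagging it does not fill it; as written, the proposal assumes its conclusion precisely where the difficulty lies.
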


\begin{theorem}
$\oni_q^\mathbb{T}$ is an AF-algebra.
\end{theorem}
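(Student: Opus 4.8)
The strategy is to verify the local characterization of AF algebras: since $\oni_q^{\mathbb{T}}$ is separable and unital, it suffices to show that every finite subset $F$ of its unit ball is approximated, within a prescribed $\epsilon > 0$, by a finite-dimensional $C^*$-subalgebra $C \subset \oni_q^{\mathbb{T}}$. Two inclusions relating $\oni_q^{\mathbb{T}}$ to the UHF algebra $\mathfrak{U}_{n^{\infty}} \simeq \oni_0^{\mathbb{T}}$ are already available. On one hand, the $\mathbb{T}$-equivariant inclusion $\mathcal{O}_n \subset \oni_q$ restricts on fixed-point algebras to an inclusion $\mathfrak{U}_{n^{\infty}} \subset \oni_q^{\mathbb{T}}$, so that $\oni_q^{\mathbb{T}}$ contains inner copies of $M_{n^k}$ of every size. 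On the other hand, the inclusion $\oni_q \subset \mathcal{O}_n$, together with the identification $\overline{\oni}_q^{\mathbb{T}} \simeq \mathfrak{U}_{n^{\infty}}$ established above from Theorem \ref{LimIso}, realizes $\oni_q^{\mathbb{T}}$ as a $C^*$-subalgebra of the ambient AF algebra $\mathfrak{U}_{n^{\infty}}$.

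First I would fix $F$ and, working inside the ambient $\mathfrak{U}_{n^{\infty}}$, choose a finite-dimensional subalgebra $D \subset \mathfrak{U}_{n^{\infty}}$ with $F \subset_\gamma D$, where $\gamma$ is to be taken small. Since $D$ need not lie inside the subalgebra $\oni_q^{\mathbb{T}}$, the plan is to perturb it back inside by means of Theorem \ref{approximation}. Assuming the near-inclusion $D \subset_\gamma \oni_q^{\mathbb{T}}$ (discussed below), I would apply Theorem \ref{approximation} with the finite-dimensional $D$ in the role of $A$ and $\oni_q^{\mathbb{T}}$ in the role of $B$; here the hypothesis on nuclear dimension is automatic, since a finite-dimensional algebra has nuclear dimension $0$, so one may take $n=0$ and $\eta = 2(2\gamma + \gamma^2)(2 + 2\gamma + \gamma^2)$. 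For $\gamma$ small enough that $\eta < 1/210000$, the theorem produces an embedding $\theta : D \rightarrow \oni_q^{\mathbb{T}}$ with $\lVert \theta(d) - d \rVert \leq 20 \gamma^{\frac{1}{2}}$ on the relevant finite set. Setting $C = \theta(D)$ yields a genuine finite-dimensional subalgebra of $\oni_q^{\mathbb{T}}$ with $F \subset_{\gamma + 20\gamma^{1/2}} C$, and shrinking $\gamma$ makes the right-hand quantity smaller than $\epsilon$, which establishes the AF property. The quantitative control $20\gamma^{\frac{1}{2}}$ is exactly what converts an approximate finite-dimensional structure into an exact one living inside the algebra.

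The main obstacle is the quantitative near-inclusion $D \subset_\gamma \oni_q^{\mathbb{T}}$ with $\gamma$ below Christensen's threshold, which amounts to showing that the inner copy $\mathfrak{U}_{n^{\infty}} \subset \oni_q^{\mathbb{T}}$ is $\gamma$-dense, i.e.\ that the finite-dimensional approximants chosen in the ambient AF algebra can be taken almost inside $\oni_q^{\mathbb{T}}$. I expect to extract this from the self-absorption $\mathfrak{U}_{n^{\infty}} \otimes \oni_q^{\mathbb{T}} \simeq \mathfrak{U}_{n^{\infty}}$ (a consequence of Theorem \ref{LimIso} and $\overline{\oni}_q^{\mathbb{T}} \simeq \mathfrak{U}_{n^{\infty}}$) together with the approximately inner flip of $\oni_q^{\mathbb{T}}$: conjugating $F$ by the unitaries implementing the flip, as in Lemma \ref{approxpror}, should push it arbitrarily close to the inner UHF subalgebra and thereby supply finite-dimensional approximants that are simultaneously close to $F$ and $\gamma$-contained in $\oni_q^{\mathbb{T}}$. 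Securing this near-inclusion below the constant $1/210000$ is the technical heart of the argument; the passage to the exact finite-dimensional subalgebra $C$ and the local-approximation bookkeeping are then routine.
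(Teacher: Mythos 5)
Your overall skeleton coincides with the paper's proof — the local characterization of AF algebras, an ambient copy of $\mathfrak{U}_{n^\infty}$, and Theorem \ref{approximation} applied to a finite-dimensional algebra (nuclear dimension zero) to convert a near-inclusion into a genuine embedding — but the step you yourself flag as the ``technical heart'', the near-inclusion $D \subset_\gamma \oni_q^{\mathbb{T}}$, is exactly where your proposal has a genuine gap, and the mechanism you suggest for it would not work. The flip unitaries of Lemma \ref{approxpror} live in (a spatial model of) $\oni_q^{\mathbb{T}} \otimes_\alpha \oni_q^{\mathbb{T}}$ and approximate the tensor flip $\sigma$; conjugating by them gives no control on the distance from an element of the ambient UHF algebra to the subalgebra $\oni_q^{\mathbb{T}}$, and there is no reason the finite-dimensional $D$ chosen in the ambient algebra should be near the inner copy of $\mathfrak{U}_{n^\infty}$ coming from $\mathcal{O}_n \subset \oni_q$. (Also, extracting an ambient AF copy from the inclusion $\oni_q \subset \mathcal{O}_n$ would require that inclusion to be $\mathbb{T}$-equivariant, which the paper does not assert; the embedding actually available is $i_0 : \oni_q^{\mathbb{T}} \hookrightarrow \overline{\oni}_q^{\mathbb{T}} \simeq \mathfrak{U}_{n^\infty}$, injective because $\oni_q^{\mathbb{T}}$ is simple.) The paper obtains the near-inclusion not from flips but from the inductive-limit structure itself: the generators of a finite-dimensional $B \subset \overline{\oni}_q^{\mathbb{T}}$ lie within $\theta$ of some finite stage $i_k(\oni_q^{\mathbb{T}})$, and this is upgraded to a near-inclusion of the whole unit ball of $B$ by a quantitative spanning argument (a Frobenius-orthonormalized basis $c_{\mu_i}$, monomial approximants $a_\mu$, and the inductive bound $\lVert b_\mu - a_\mu \rVert \leq (\theta + K)^{|\mu|} - K^{|\mu|}$), with $\theta$ then chosen small enough to get below Christensen's threshold.

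Even granting a near-inclusion, there is a second defect your outline misses: what the limit provides is near-containment in the stage-$k$ copy $i_k(\oni_q^{\mathbb{T}})$, not in $i_0(\oni_q^{\mathbb{T}})$, so applying Theorem \ref{approximation} there and pulling back yields finite-dimensional approximants to $\varphi^k(x_i)$ rather than to $x_i$ — and $\varphi$ is not invertible on $\oni_q^{\mathbb{T}}$, so you cannot simply undo this. Your plan implicitly demands $D \subset_\gamma i_0(\oni_q^{\mathbb{T}})$ for arbitrarily small $\gamma$, which is precisely what an inductive limit does not give. The paper untwists the stage mismatch with the automorphism $\overline{\varphi}$ of $\overline{\oni}_q^{\mathbb{T}}$, which intertwines the stages: it transports the near-inclusion to stage $0$, applies the embedding theorem there to get $f_\eta$, pulls back through $i_0$, and then pushes forward with the isometric endomorphism $\varphi^k$ (the image of a finite-dimensional algebra under $\varphi^k$ is again finite-dimensional), closing with the estimate $\lVert x_i - \varphi^k(c_i^\eta) \rVert \leq \eta + 20\eta^{\frac{1}{2}}$. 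Without both ingredients — the quantitative stage-$k$ near-inclusion and the $\overline{\varphi}$ bookkeeping — your argument cannot be completed as stated.
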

\begin{proof}
We use the following characterization of AF-algebras: $A$ is an AF-algebra if for every subset $\{x_1, \ldots x_n\} \subset A$ and $\epsilon > 0$ there exists a finite-dimensional subalgebra $B$ and a subset $\{b_1, \ldots, b_n \} \subset B$ such that $\lVert x_i - b_i \rVert < \epsilon$. 

Let $i_k : \oni_q^\mathbb{T} \rightarrow \overline{\oni}_q^\mathbb{T}$ be monomorphisms induced from the direct system. Let $\{x_1, \ldots, x_n \} \subset \oni_q^\mathbb{T}$ and $\epsilon > 0$. Since $\overline{\oni}_q^\mathbb{T}$ is an AF-algebra, there exists a finite-dimensional subalgebra $B$ and a subset $\{b_1, \ldots, b_n\} \subset B$ such that $\lVert i_0(x_i) - b_i \rVert < \eta$ (we can choose $B$ to be $C^*(b_1, \ldots, b_n)$). Since $\overline{\oni}_q^\mathbb{T}$ is a direct limit, there exists $\{a_1, \ldots, a_n \} \subset A$ and $k$ such that $\lVert b_i - i_k(a_i) \rVert < \theta$. 
Let $\overline{\varphi} : \overline{\oni}_q^\mathbb{T} \rightarrow \overline{\oni}_q^\mathbb{T}$ be an automorphism induced by the following diagram:
\[
\begin{tikzcd}
\oni_q^\mathbb{T} \arrow[r, "\varphi"] \arrow[d,"\varphi"] & \oni_q^\mathbb{T} \arrow[r, "\varphi"] \arrow[d, "\varphi"]  & \oni_q^\mathbb{T} \arrow[r, "\varphi"] \arrow[d, "\varphi"] & \ldots \arrow[r] & \overline{\oni}_q^\mathbb{T} \arrow[d, "\overline{\varphi}"] \\
\oni_q^\mathbb{T} \arrow[r, "\varphi"] & \oni_q^\mathbb{T}  \arrow[r, "\varphi"] & \oni_q^\mathbb{T} \arrow[r,"\varphi"] & \ldots \arrow[r] & \overline{\oni}_q^\mathbb{T}
\end{tikzcd}
\]
We prove that if we choose $\theta$ small enough, $\overline{\varphi}^{-k}(B) \subset_\eta i_0(\oni_q^\mathbb{T})$. Notice that it is equivalent to $B \subset_\eta i_k(\oni_q^\mathbb{T})$.

Let $N = \dim B$. Since $B$ is finite-dimensional, there exists a set of multiindices $\{ \mu_1, \ldots, \mu_N \}$, where $\mu_i = (\alpha_1^i, \ldots, \alpha_{n_i}^i)$ and $\alpha_j \in \{-n, \ldots, -1, 1, \ldots, n \}$ such that $\{ b_{\mu_1}, \ldots, b_{\mu_N} \}$ is a basis of $B$. On $B$ we have Frobenius inner product and Frobenius norm. By change of basis of $B$ via some linear operator $T : B \rightarrow B$ we can choose basis $c_{\mu_i} = T(b_{\mu_i})$ which is orthonormal with respect to the Frobenius inner product. Denote $a_{\mu_i} = i_k(a_{\alpha_1^i})\ldots i_k(a_{\alpha_{n_i}^i})$. Then there exists a sequence of polynomials $P_1, P_2, \ldots$ with positive coefficients and $P_i(0) = 0$ such that $\lVert b_{\mu} - a_{\mu} \rVert \leq P_{|\mu|}(\theta)$ for every multiindex $\mu$. One can prove this by induction: $\lVert b_i - i_k(a_i) \rVert < \theta$, so $P_1(x) = x$. Let $K = \max \{ \lVert b_1 \rVert, \ldots, \lVert b_n \rVert \}$ and notice that $\lVert a_i \rVert \leq \lVert b_i \rVert + \theta$
\begin{align*}
    \lVert b_\mu - a_\mu \rVert & = \lVert (b_{\alpha_1} - a_{\alpha_1}) b_{\mu \setminus 1} + a_{\alpha_1} b_{\mu \setminus 1} - a_{\alpha_1} a_{\mu \setminus 1} \rVert \leq \\ & \leq \theta K^{|\mu| - 1} + (K + \theta)P_{|\mu| - 1}(\theta).
\end{align*}
Thus we can define $P_{n + 1}(x) = x K^n + (K + x)P_n(x)$. By induction one can prove that $P_n(x) = (x + K)^n - K^n$.

Suppose we choose $b \in B$ with $\lVert b \rVert \leq 1$. Then $b = \sum_{i = 1}^N C_i^{(b)} c_{\mu_i}$. Since all norms on $B$ are equivalent, there exists a constant $K$ such that \[ \lVert b \rVert \geq L \lVert b \rVert_F. \] Frobenius norm has property $\lVert a + b \rVert_F = \lVert a \rVert_F + \lVert b \rVert_F + 2 \langle a, b \rangle_F$. Since the basis was chosen orthonormal,
\[ 1 \geq \lVert b \rVert \geq L \lVert \sum C_i^{(b)} c_{\mu_i} \rVert_F = L \sum_{i = 1}^N |C_i^{(b)}|^2.\] Thus $|C_i^{(b)}| \leq \frac{1}{\sqrt{L}}$. Let $\alpha = \max_{i = 1}^N |\mu_i|$. Suppose that $\theta$ was chosen to be less than $1$. Then
\begin{align*}
     \lVert \sum_{i = 1}^N C_i^{(b)} (c_{\mu_i} - a_{\mu_i}) \rVert & \leq  \frac{1}{\sqrt{L}} \sum_{i = 1}^N (\lVert T(b_{\mu_i}) - b_{\mu_i} \rVert + \lVert b_{\mu_i} - a_{\mu_i} \rVert) \leq \\ & \leq \frac{1}{\sqrt{L}} \sum_{i = 1}^N ((\lVert T \rVert + 1) \lVert b_{\mu_i} \rVert + P_{|\mu_i|}(\theta)) \leq \\ & \leq \frac{\lVert T \rVert + 1}{\sqrt{L}} \sum_{i = 1}^N (\theta^{|\mu_i|} + P_{|\mu_i|}(\theta)) \leq \\ & \leq \frac{(\lVert T \rVert + 1)N}{\sqrt{L}} (\theta^{\alpha} + (\theta + K)^\alpha - K^\alpha).
\end{align*}
The last inequality follows from that $\theta^\alpha + (\theta + K)^n - K^n$ increases on the interval $(0,1)$ when $\alpha$ increases. Since $(\theta^{\alpha} + (\theta + K)^\alpha - K^\alpha)(0) = 0$, we can find $\theta$ sufficiently small for \[ \frac{(\lVert T \rVert + 1)N}{\sqrt{L}} (\theta^{\alpha} + (\theta + K)^\alpha - K^\alpha) < \eta .\]

Since $B$ has finite nuclear dimension, by Theorem \ref{approximation} if we take $\eta$ small enough, there exists an embedding $f_\eta : \overline{\varphi}^{-k}(B) \rightarrow i_0(\oni_q^\mathbb{T})$ such that $\lVert f_\eta(\overline{\varphi}^{-k}(b_i)) - \overline{\varphi}^{-k}(b_i) \rVert \leq 20\eta^{\frac{1}{2}}$. Let $c_i^\eta \in \oni_q^\mathbb{T}$ be such that $i_0(c_i^\eta) = f_\eta(\overline{\varphi}^{-k}(b_i))$.
\begin{align*}
    \lVert x_i - \varphi^k(c_i^\eta) \rVert & = \lVert i_0(x_i) - i_0(\varphi^k(c_i^\eta)) \rVert = \\ & =\lVert i_0(x_i) - \overline{\varphi}^{k}(i_0(c_i^\eta)) \rVert \leq \\
    & \leq \lVert i_0(x_i) - b_i \rVert + \lVert b_i - \overline{\varphi}^{k}(i_0(c_i^\eta)) \rVert \leq \\ & \leq \eta + \lVert \overline{\varphi}^{-k}(b_i) - i_0(c_i^\eta) \rVert = \\ & = \eta + \lVert \overline{\varphi}^{-k}(b_i) - f_\eta(\overline{\varphi}^{-k}(b_i)) \rVert \leq \eta + 20 \eta^{\frac{1}{2}}. 
\end{align*}
Take $\eta$ such that $\epsilon = \eta + 20\eta^{\frac{1}{2}}$. Since $C^*(\varphi^k(c_1^\eta), \ldots, \varphi^k(c_n^\eta))$ is finite-dimensional, $\oni_q^\mathbb{T}$ satisfies the characterization of AF-algebras.

\end{proof}

\begin{theorem}\label{UHF}
$\oni_q^\mathbb{T} \simeq \mathfrak{U}_{n^\infty}$.
\end{theorem}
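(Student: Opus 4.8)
The plan is to verify the three conditions of Theorem \ref{RosenbergUHF} for the unital, separable, nuclear algebra $A=\oni_q^\mathbb{T}$. Two of them are already available. Condition (1), that $\oni_q^\mathbb{T}$ has approximately inner flip, is exactly the flip-approximation theorem proved above. Condition (2), an asymptotic imbedding in $\mathfrak{U}_{n^\infty}$, follows from Theorem \ref{quasi} (quasidiagonality of $\oni_q^\mathbb{T}$) together with the Rosenberg--Effros lemma that every quasidiagonal unital $C^*$-algebra admits such an imbedding. So the entire difficulty is concentrated in condition (3), the tensorial self-absorption $\oni_q^\mathbb{T}\simeq\oni_q^\mathbb{T}\otimes\mathfrak{U}_{n^\infty}$; note that conditions (1) and (2) alone cannot force it, since e.g. $\mathbb{C}$ satisfies them but is not $\mathfrak{U}_{n^\infty}$.

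First I would reduce condition (3) to a $K$-theoretic statement. By Theorem \ref{LimIso} we have $\overline{\oni}_q^\mathbb{T}\simeq\mathfrak{U}_{n^\infty}\otimes\oni_q^\mathbb{T}$, so (3) is equivalent to $\oni_q^\mathbb{T}\simeq\overline{\oni}_q^\mathbb{T}$, i.e. to the canonical unital embedding $i_0\colon\oni_q^\mathbb{T}\to\overline{\oni}_q^\mathbb{T}$ being an isomorphism. Both algebras are AF (by the AF theorem above and since $\overline{\oni}_q^\mathbb{T}\simeq\mathfrak{U}_{n^\infty}$), so by Elliott's classification it suffices that $i_0$ induce an isomorphism of pointed ordered $K_0$-groups. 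Applying $K_0$ to the defining system, the connecting endomorphism $\varphi(a)=\sum_i s_ias_i^*$ satisfies $[\varphi(p)]=n[p]$, because the projections $s_ips_i^*$ are pairwise orthogonal and each Murray--von Neumann equivalent to $p$. Hence $\varphi_*$ is multiplication by $n$ on $G:=K_0(\oni_q^\mathbb{T})$ and $K_0(\overline{\oni}_q^\mathbb{T})=\varinjlim(G\xrightarrow{\,\cdot n\,}G\to\cdots)=G[1/n]$. Thus condition (3) comes down to showing that multiplication by $n$ is an automorphism of $G$, i.e. that $G$ is uniquely $n$-divisible.

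To prove unique $n$-divisibility I would combine three inputs. Injectivity of $\cdot n$ is free, as $G$ is torsion-free ($\oni_q^\mathbb{T}$ being AF). Since $\oni_q^\mathbb{T}$ has approximately inner flip it is simple with a unique tracial state $\tau$, equivalently a unique normalized state on $(G,[1])$. I claim $\tau_*\colon G\to\mathbb{R}$ is injective: $i_0$ is trace-preserving by uniqueness of $\tau$, so $\ker\tau_*\subseteq\ker i_{0*}$, and $\ker i_{0*}$ is the set of classes killed by some power of $\cdot n$, which is trivial by torsion-freeness. Therefore $\tau_*$ identifies $G$ with $H:=\tau_*(G)\subseteq\mathbb{R}$. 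Finally the matrix-stability lemma $\oni_q^\mathbb{T}\simeq M_n(\oni_q^\mathbb{T})$ yields an order automorphism $\beta$ of $G$ with $\beta([1])=n[1]$; comparing normalized states gives $\tau_*\circ\beta=n\,\tau_*$, whence $H=\tau_*(\beta(G))=nH$ is $n$-divisible. Consequently $G\cong H$ is uniquely $n$-divisible (indeed $H=\mathbb{Z}[1/n]$), so $\cdot n$ is an automorphism, condition (3) holds, and Theorem \ref{RosenbergUHF} delivers $\oni_q^\mathbb{T}\simeq\mathfrak{U}_{n^\infty}$.

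The main obstacle is precisely the surjectivity half of $n$-divisibility, namely ruling out infinitesimals in $K_0(\oni_q^\mathbb{T})$. A general simple unital AF algebra with a unique trace can have a nonzero infinitesimal subgroup, so there is no purely abstract argument; one must genuinely exploit both the embedding $\oni_q^\mathbb{T}\hookrightarrow\overline{\oni}_q^\mathbb{T}\simeq\mathfrak{U}_{n^\infty}$ (to annihilate infinitesimals against a torsion-free inductive limit) and the matrix-stability lemma (to make the trace range $n$-divisible). Everything else -- conditions (1) and (2), the reduction of (3) via Theorem \ref{LimIso}, and the identification $\varphi_*=\,\cdot n$ -- is routine bookkeeping with the results already established.
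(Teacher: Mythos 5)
Your scaffolding (verify the three conditions of Theorem \ref{RosenbergUHF}, with the weight on condition (3)) is reasonable, but the pivotal $K$-theoretic step is false, and the reduction of condition (3) collapses with it. You claim $\varphi_*=\cdot\,n$ on $G=K_0(\oni_q^\mathbb{T})$ because the projections $s_ips_i^*$ are pairwise orthogonal and each Murray--von Neumann equivalent to $p$. That equivalence is implemented by the partial isometry $s_ip$, which is \emph{not} gauge-invariant ($z\curvearrowright s_ip=zs_ip$); it lives in $\oni_q$, not in matrices over the fixed-point algebra $\oni_q^\mathbb{T}$, so it proves nothing about classes in $K_0(\oni_q^\mathbb{T})$. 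Worse, your claim is self-contradictory on your own terms: $\varphi$ is unital, since $\sum_i s_is_i^*=1$ in $\mathcal{O}_n\subset\oni_q$, so $\varphi_*([1])=[1]$; if also $\varphi_*=\cdot\,n$, then $(n-1)[1]=0$, and the torsion-freeness of $K_0$ of an AF algebra (which you yourself invoke) would force $[1]=0$, impossible for a unital stably finite algebra. The partial isometries that \emph{do} lie in $\oni_q^\mathbb{T}$ are $s_ips_j^*$, which make the $n$ projections $s_ips_i^*$ mutually equivalent to each other (not to $p$) inside the fixed-point algebra; hence $[\varphi(p)]=n[s_1ps_1^*]$, and it is $\mathsf{Ad}(s_1)_*$ that ends up being division by $n$ --- exactly the computation the paper performs later in Theorem \ref{KTheoryOni} --- while $\varphi_*$ turns out to be the identity. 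Consequently $K_0(\overline{\oni}_q^\mathbb{T})=\varinjlim(G,\varphi_*)$ is $G$ itself, not $G[1/n]$, and "unique $n$-divisibility of $G$" is not the statement on which condition (3) turns.

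The paper closes the hole you leave open by first pinning down $G$ abstractly: from $\overline{\oni}_q^\mathbb{T}\simeq\mathfrak{U}_{n^\infty}\otimes\oni_q^\mathbb{T}$ (Theorem \ref{LimIso}) together with $\overline{\oni}_q^\mathbb{T}\simeq\mathfrak{U}_{n^\infty}$, the K\"unneth theorem gives $G\otimes_\mathbb{Z}\mathbb{Z}[\tfrac1n]\simeq\mathbb{Z}[\tfrac1n]$, whence $G$ is torsion-free of rank one. An endomorphism of a torsion-free rank-one group is determined by its value on a single nonzero element, so $\varphi(1)=1$ forces $\varphi_*=\mathrm{id}$; then ordered $K$-theory is continuous along the constant system with identity connecting maps, giving $K_*^{\leq}(\oni_q^\mathbb{T})\simeq K_*^{\leq}(\overline{\oni}_q^\mathbb{T})\simeq K_*^{\leq}(\mathfrak{U}_{n^\infty})$, and Elliott's classification of AF algebras (with the $M_n$-stability lemma to settle the class of the unit) finishes the proof. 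Note that you never invoke K\"unneth, which is where the actual information about $G$ enters. Your peripheral argument --- unique trace from approximately inner flip, plus the matrix-stability automorphism $\beta$ with $\tau_*\circ\beta=n\tau_*$, forcing $\tau_*(G)=n\tau_*(G)$ --- is correct and a nice observation, but it answers a question ($n$-divisibility) that the corrected computation of $\varphi_*$ renders moot, and your justification of the injectivity of $\tau_*$ itself leans on the erroneous identification $\ker i_{0*}=\bigcup_k\ker(\cdot\,n^k)$. So as written the proposal does not prove the theorem.
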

\begin{proof}
\
\begin{enumerate}
    \item Kunneth theorem: $K_0(\oni_q^\mathbb{T} \otimes \mathfrak{U}_{n^\infty}) \simeq K_0(\oni_q^\mathbb{T}) \otimes_\mathbb{Z} K_0(\mathfrak{U}_{n^\infty})$. Thus
\[ K_0(\oni_q^\mathbb{T}) \otimes_\mathbb{Z} \mathbb{Z}[\frac{1}{n}] \simeq \mathbb{Z}[\frac{1}{n}]. \]
    \item Rank of an abelian group $A$ coincides with $\dim_\mathbb{Q} (A \otimes_\mathbb{Z} \mathbb{Q})$. Thus rank of $K_0(\oni_q^\mathbb{T})$ is
\begin{align*}
    \dim(K_0(\oni_q^\mathbb{T}) \otimes_\mathbb{Z} \mathbb{Q}) & = \dim(K_0(\oni_q^\mathbb{T}) \otimes (\mathbb{Z}[\frac{1}{n}] \otimes_\mathbb{Z} \mathbb{Q})) = \\ & = \dim((K_0(\oni_q^\mathbb{T}) \otimes \mathbb{Z}[\frac{1}{n}]) \otimes_\mathbb{Z} \mathbb{Q}) = \\ & = \dim(\mathbb{Z}[\frac{1}{n}] \otimes_\mathbb{Z} \mathbb{Q}) = \\ & = \dim(\mathbb{Q}) = 1.
\end{align*} 
    \item It is also clear that $K_0(\oni_q^\mathbb{T})$ is torsion-free, since it is a subgroup ($K_0(\oni_q^\mathbb{T}) \otimes_\mathbb{Z} 1 \hookrightarrow \mathbb{Z}[\frac{1}{n}]$) of a torsion-free group.
    \item In an abelian group of rank 1 for every two nontrivial elements $a, b$ there are numbers $n, m$ such that $na + mb = 0$. 
    \item Suppose one has an endomorphism of a torsion-free abelian group of rank 1. Then it is determined by it's value on an arbitrary element. Let $\varphi(a) = x$. Then for any $b$ in group $0 = \varphi(na + mb) = nx + m\varphi(b)$. Since the group is torsion-free, $\varphi(b)$ is determined uniquely from this equation.
    \item Recall $\varphi : \oni_q^\mathbb{T} \rightarrow \oni_q^\mathbb{T}$. Since $\varphi(1) = 1$, $K_*(\varphi) = id$. Denote $K_*^{\leq}$ to be the functor of ordered group, which is an invariant of AF-algebras. It commutes with direct limits. Take $K_*^{\leq}$ of the sequence
\[ \oni_q^\mathbb{T} \xrightarrow{\varphi} \oni_q^\mathbb{T} \xrightarrow{\varphi} \oni_q^\mathbb{T} \xrightarrow{\varphi} \ldots \rightarrow \overline{\oni}_q^\mathbb{T}. \]
Since $K_*(\varphi) = id$, it follows that $K_*^{\leq}(\oni_q^\mathbb{T}) \simeq K_*^{\leq}(\overline{\oni}_q^\mathbb{T}) \simeq K_*^{\leq}(\mathfrak{U}_{n^\infty})$. Assume $\varphi \in \Aut(\mathbb{Z}[\frac{1}{n}])$. Then $\varphi(x) = \frac{1}{n^b}$. Thus the class of $1 \in K_0(\oni_q^\mathbb{T})$ is equal to $\frac{1}{n^b}$. By the classification theorem for AF-algebras, $\oni_q^\mathbb{T} \otimes M_{n^{-b}} \simeq \mathfrak{U}_{n^\infty}$ if $b < 0$ and $\oni_q^\mathbb{T} \simeq \mathfrak{U}_{n^\infty} \otimes M_{n^{b}}$ if $b > 0$. By Lemma 4.18, $\oni_q^\mathbb{T} \simeq \mathfrak{U}_{n^\infty}$.
\end{enumerate}
\end{proof}

\section{$\oni_q \simeq \oni_0$}

\begin{theorem}\label{Paschke}
$\oni_q = C^*((\oni_q^L)^\mathbb{T}, s_1)$.
\end{theorem}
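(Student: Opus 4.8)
The plan is to establish the two inclusions separately, the content being entirely in the harder one. The inclusion $C^*((\oni_q^L)^\mathbb{T}, s_1) \subseteq \oni_q$ is immediate: by definition $(\oni_q^L)^\mathbb{T}$ is a $C^*$-subalgebra of $\oni_q = \oni_q^L$, and $s_1$ lies in the copy of $\mathcal{O}_n$ sitting inside $\oni_q$ furnished by the $\mathbb{T}$-equivariant inclusion $\mathcal{O}_n \subset \oni_q$, so both generators already belong to $\oni_q$. For the reverse inclusion $\oni_q \subseteq C^*((\oni_q^L)^\mathbb{T}, s_1)$, I would use that $\oni_q = \oni_q^L = C^*(\pi(L_1^q), \ldots, \pi(L_n^q))$ and show that each generator $\pi(L_i^q)$ lies in $C^*((\oni_q^L)^\mathbb{T}, s_1)$.

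The whole argument is bookkeeping with the gauge grading. The relevant facts are that under $\gamma$ the isometries $s_1, \ldots, s_n$ and the generators $\pi(L_i^q)$ all have degree $1$, that $(\oni_q^L)^\mathbb{T}$ is precisely the degree-$0$ part of $\oni_q^L$, and that the $s_j$ satisfy the Cuntz relations $s_j^* s_k = \delta_{jk} 1$ and $\sum_{j=1}^n s_j s_j^* = 1$. First I would recover all the $s_j$ from $s_1$: writing $s_j = s_j (s_1^* s_1) = (s_j s_1^*) s_1$ and observing that $s_j s_1^*$ has gauge degree $0$ and lies in $\oni_q^L$, we get $s_j s_1^* \in (\oni_q^L)^\mathbb{T}$, whence $s_j \in C^*((\oni_q^L)^\mathbb{T}, s_1)$ for every $j$.

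With all the $s_j$ in hand, I would recover the generators by inserting the resolution of the identity:
\[ \pi(L_i^q) = \Big( \sum_{j=1}^n s_j s_j^* \Big) \pi(L_i^q) = \sum_{j=1}^n s_j \big( s_j^* \pi(L_i^q) \big). \]
Each $s_j^* \pi(L_i^q)$ has gauge degree $-1 + 1 = 0$ and lies in $\oni_q^L$, hence $s_j^* \pi(L_i^q) \in (\oni_q^L)^\mathbb{T}$; since each $s_j$ was already shown to lie in $C^*((\oni_q^L)^\mathbb{T}, s_1)$, the right-hand side lies there too. Thus every generator of $\oni_q^L$ belongs to $C^*((\oni_q^L)^\mathbb{T}, s_1)$, which gives $\oni_q \subseteq C^*((\oni_q^L)^\mathbb{T}, s_1)$ and hence equality.

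The step deserving the most care is the use of the full Cuntz relation $\sum_{j=1}^n s_j s_j^* = 1$ with $1$ the unit of $\oni_q$: this requires the inclusion $\mathcal{O}_n \hookrightarrow \oni_q$ to be \emph{unital}, which I would justify from the fact that it is implemented by conjugation with a unitary (so that the identity is preserved and descends to the quotient). Without this, the reconstruction of the $\pi(L_i^q)$ collapses. The remaining points—that $(\oni_q^L)^\mathbb{T}$ is exactly the degree-$0$ spectral subspace and that $s_j s_1^*$ and $s_j^* \pi(L_i^q)$ are genuinely $\gamma$-fixed—follow at once from $\gamma_z$ being a $*$-automorphism with $\gamma_z(s_j) = z s_j$ and $\gamma_z(\pi(L_i^q)) = z\, \pi(L_i^q)$.
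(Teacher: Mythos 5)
Your proof is correct, and all the facts it leans on (the $\mathbb{T}$-equivariant inclusion $\mathcal{O}_n\subset\oni_q^L$ coming from Dykema--Nica, its unitality, and $(\oni_q^L)^{\mathbb{T}}$ being exactly the degree-zero part of the gauge grading) are legitimately available from the paper's setup; but your route is genuinely different from the paper's. The paper never uses the isometries $s_2,\dots,s_n$ at all: it first recovers $a_1=\pi(L_1^q)$ from $s_1$ through the explicit Dykema--Nica formula $s_1=(\rho_L^+)^{1/2}\pi(L_1^q)$, noting that the correcting factor is gauge-invariant, and then recovers every other generator by the identity
\[
a_k=\Bigl(a_k\sum_{l=0}^{\infty}(-q)^l(a_1a_1^*)^l\,a_1^*\Bigr)a_1 ,
\]
where the parenthesized factor has gauge degree zero and the series telescopes against $a_1$ thanks to the relation $a_1^*a_1=1+qa_1a_1^*$. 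So the paper's device for undoing the degree $-1$ of $a_1^*$ is a Neumann series for $(a_1^*a_1)^{-1}$, whereas yours is the Cuntz partition of unity $\sum_j s_js_j^*=1$ after reconstituting all the $s_j$ from $s_1$ via the degree-zero elements $s_js_1^*$. Your argument buys robustness: it is purely algebraic in the grading and needs no convergence discussion, while the paper's series converges in norm only when $|q|\,\lVert a_1a_1^*\rVert<1$; since $\lVert a_1\rVert^2=(1-q)^{-1}$ for $0\le q<1$, this fails for $q\ge 1/2$, and the paper's computation must then be read with the functional-calculus inverse $(1+qa_1a_1^*)^{-1}=(a_1^*a_1)^{-1}$ (which exists for all $|q|<1$) in place of the literal series. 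What the paper's approach buys in exchange is economy: it needs only $s_1$, $a_1$, the grading and the commutation relation, and in particular does not invoke the unitality point you rightly flagged. That point is indeed the step in your proof requiring care, and your justification is the correct one: the embedding of $\mathbb{K}\mathcal{O}_n$ into $\ghani_q^L$ is implemented by a unitary, hence unital, and $1-\sum_j S_jS_j^*$ is the rank-one vacuum projection, which is compact and therefore vanishes in $\oni_q$.
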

\begin{proof}
Since $s_1 = (\rho_L^{+})^{\frac{1}{2}} \pi(L_1^q)$, $a_1 = \pi(L_1^q) \in C^*((\oni_q^L)^\mathbb{T}, s_1)$. We prove that $a_k = \pi(L_k^q) \in C^*((\oni_q^L)^\mathbb{T}, s_1)$:
\begin{align*}
    & (a_k\sum_{l = 0}^\infty (-q)^l (a_1 a_1^*)^l a_1^*) a_1  = 
    a_k\sum_{l = 0}^\infty (-q)^l (a_1 a_1^*)^l + q(-q)^l (a_1 a_1^*)^{l+1} = a_k.
\end{align*}
\end{proof}

\begin{theorem}\label{crossedproduct}
\[ \oni_q \simeq \oni_q^\mathbb{T} \rtimes_{\mathsf{Ad}(s_1)} \mathbb{N}. \]
\end{theorem}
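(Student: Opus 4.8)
The plan is to apply Proposition \ref{OrtegaEquivalenceModels} with $A = \oni_q^\mathbb{T}$ and the isometry $S = s_1$, so that the Paschke crossed product $C^*(\oni_q^\mathbb{T}, s_1)$ gets identified with the Stacey crossed product $\oni_q^\mathbb{T} \rtimes_{\mathsf{Ad}(s_1)} \mathbb{N}$. The identification of $\oni_q$ with the Paschke crossed product is already supplied by Theorem \ref{Paschke} (using $(\oni_q^L)^\mathbb{T} = \oni_q^\mathbb{T}$), so the task reduces entirely to verifying the hypotheses of Ortega's result.

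First I would check that $s_1$ is a nonunitary isometry: the Cuntz relations give $s_1^* s_1 = 1$, while $\sum_{i=1}^n s_i s_i^* = 1$ together with $n \geq 2$ forces $s_1 s_1^* \neq 1$. Next I would verify the invariance conditions $s_1 \oni_q^\mathbb{T} s_1^* \subset \oni_q^\mathbb{T}$ and $s_1^* \oni_q^\mathbb{T} s_1 \subset \oni_q^\mathbb{T}$. These follow from the $\mathbb{T}$-equivariance of the inclusion $\mathcal{O}_n \subset \oni_q$: since $\gamma_z(s_1) = z s_1$, for any $a \in \oni_q^\mathbb{T}$ one computes $\gamma_z(s_1 a s_1^*) = z \bar z\, s_1 a s_1^* = s_1 a s_1^*$ and likewise $\gamma_z(s_1^* a s_1) = s_1^* a s_1$, so both compressions are gauge-fixed and hence lie in $\oni_q^\mathbb{T}$.

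It remains to produce a faithful trace on $\oni_q^\mathbb{T}$ and to exclude $s_1$-invariant ideals. Both are immediate from Theorem \ref{UHF}: since $\oni_q^\mathbb{T} \simeq \mathfrak{U}_{n^\infty}$, the fixed-point algebra carries a faithful tracial state, and being UHF it is simple, so it has no nontrivial ideals at all---in particular none satisfying $s_1 I s_1^* \subset I$ or $s_1^* I s_1 \subset I$. With every hypothesis in place, Proposition \ref{OrtegaEquivalenceModels} gives $\oni_q = C^*(\oni_q^\mathbb{T}, s_1) \simeq \oni_q^\mathbb{T} \rtimes_{\mathsf{Ad}(s_1)} \mathbb{N}$.

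I do not anticipate a genuine obstacle, since each hypothesis of Ortega's Proposition is delivered by an earlier structural result: Theorem \ref{Paschke} for the Paschke picture, the $\mathbb{T}$-equivariance of $\mathcal{O}_n \subset \oni_q$ for the two invariance conditions, and the UHF identification of Theorem \ref{UHF} for the faithful trace and simplicity. The only point demanding a little care is confirming that the endomorphism $\mathsf{Ad}(s_1)$ really maps $\oni_q^\mathbb{T}$ into itself---precisely the gauge computation above---and noting that $\mathsf{Ad}(s_1)(1) = s_1 s_1^*$ is a proper projection, so that the induced endomorphism sends $1$ to something strictly below $1$, keeping the setup consistent with its later use in Theorem \ref{OrtegaPF}.
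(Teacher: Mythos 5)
Your proposal is correct and takes essentially the same route as the paper: the paper's proof consists of the single assertion that $(\oni_q^\mathbb{T}, s_1)$ satisfies all conditions of Proposition \ref{OrtegaEquivalenceModels}, combined with Theorem \ref{Paschke}. The verifications you supply---nonunitarity of the isometry $s_1$, gauge-invariance of the compressions $s_1 a s_1^*$ and $s_1^* a s_1$, and the faithful trace together with simplicity coming from $\oni_q^\mathbb{T} \simeq \mathfrak{U}_{n^\infty}$ (Theorem \ref{UHF})---are precisely the details the paper leaves as ``easily seen.''
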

\begin{proof}
One can easily see that $(\oni_q^\mathbb{T}, s_1)$ satisfy all conditions of Theorem \ref{OrtegaEquivalenceModels}. Combining with Theorem \ref{Paschke}, we get the result.
\end{proof}

\begin{theorem}
$\oni_q$ is simple, purely infinite, nuclear and satisfies UCT.
\end{theorem}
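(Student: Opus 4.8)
The algebra $\oni_q$ is unital and separable, being a quotient of the finitely generated unital $C^*$-algebra $\ghani_q$; separability is what we will ultimately need to feed into Theorem \ref{Philips}. The plan is to read off the four properties from the crossed product description obtained in Theorem \ref{crossedproduct}, namely $\oni_q \simeq \oni_q^\mathbb{T} \rtimes_{\mathsf{Ad}(s_1)} \mathbb{N}$, together with the identification $\oni_q^\mathbb{T} \simeq \mathfrak{U}_{n^\infty}$ of Theorem \ref{UHF}. \emph{Simplicity} is already free: Proposition \ref{OrtegaEquivalenceModels}, which was the tool producing the isomorphism in Theorem \ref{crossedproduct}, asserts that the resulting crossed product is simple.

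For \emph{pure infiniteness} I would apply Theorem \ref{OrtegaPF} with $A = \oni_q^\mathbb{T}$ and the injective endomorphism $\beta = \mathsf{Ad}(s_1)$, $\beta(a) = s_1 a s_1^*$. By Theorem \ref{UHF} we have $A \simeq \mathfrak{U}_{n^\infty}$, and for $n \geq 2$ a UHF algebra is unital, non-type I, of real rank zero (being AF) and has strict comparison, so all the hypotheses on $A$ are met. Since $s_1^* s_1 = 1$ while $\beta(1) = s_1 s_1^* \neq 1$ (because $\sum_i s_i s_i^* = 1$ with more than one generator), $\beta$ is injective and non-unital, and $\beta(A) = s_1 A s_1^* = (s_1 s_1^*)\,A\,(s_1 s_1^*)$ is exactly the hereditary corner of $A$ cut out by the projection $s_1 s_1^* \in A$. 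The crossed product is simple by the previous step, and in a simple $C^*$-algebra every nonzero projection is full, so $\beta(1) = s_1 s_1^*$ is full. Theorem \ref{OrtegaPF} then yields that $\oni_q$ is purely infinite and simple.

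\emph{Nuclearity and the UCT} I would obtain by dilating the crossed product by $\mathbb{N}$ to a crossed product by $\mathbb{Z}$. Concretely, $\oni_q^\mathbb{T} \rtimes_\beta \mathbb{N}$ is isomorphic to a full hereditary subalgebra (a full corner) of $B \rtimes_{\tilde\beta} \mathbb{Z}$, where $B = \varinjlim(\oni_q^\mathbb{T}, \beta)$ is the stationary inductive limit along $\beta$ and $\tilde\beta$ is the shift automorphism; equivalently one may present $\oni_q$ as the Cuntz--Pimsner algebra of the correspondence $\overline{\oni_q^\mathbb{T} s_1}$ over $\oni_q^\mathbb{T}$. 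Since $\oni_q^\mathbb{T} \simeq \mathfrak{U}_{n^\infty}$ is AF, the dilation $B$ is again AF, hence nuclear and in the bootstrap (UCT) class. Nuclearity then propagates through the crossed product by the amenable group $\mathbb{Z}$ and through passage to a hereditary subalgebra, giving that $\oni_q$ is nuclear; and the bootstrap class is closed under crossed products by $\mathbb{Z}$ and under Morita equivalence (in particular full corners), giving that $\oni_q$ satisfies the UCT.

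The only step demanding genuine care is this dilation: one must verify that the Stacey crossed product $\oni_q^\mathbb{T} \rtimes_\beta \mathbb{N}$ embeds as a \emph{full} hereditary subalgebra of a $\mathbb{Z}$-crossed product of a nuclear bootstrap-class algebra, i.e. that the corner projection implementing the embedding is full and that the inductive-limit dilation is correctly set up for the non-unital corner endomorphism $\beta$. Once this is in place, nuclearity and the UCT follow from standard permanence properties, while simplicity and pure infiniteness are immediate consequences of Theorems \ref{crossedproduct} and \ref{OrtegaPF}. This completes the verification that $\oni_q$ is a separable, unital, nuclear, purely infinite simple $C^*$-algebra satisfying the UCT, which is precisely the input required for the Kirchberg--Phillips comparison with $\oni_0$.
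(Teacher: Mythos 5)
Your proposal is correct and follows essentially the same route as the paper: simplicity from Proposition \ref{OrtegaEquivalenceModels} through Theorem \ref{crossedproduct}, pure infiniteness from Theorem \ref{OrtegaPF} applied to $\oni_q^\mathbb{T} \simeq \mathfrak{U}_{n^\infty}$, and nuclearity plus the UCT from $\oni_q^\mathbb{T}$ being AF. Your explicit dilation of the Stacey crossed product to a $\mathbb{Z}$-crossed product of the stationary inductive limit is simply the standard justification that the paper leaves implicit in its one-line appeal to AF-ness.
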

\begin{proof}
Simplicity is a corollary of Theorem \ref{crossedproduct}, \ref{UHF} and \ref{OrtegaEquivalenceModels}.

Pure infiniteness follows from Theorem \ref{crossedproduct}, \ref{UHF} and \ref{OrtegaPF}.

Nuclearity and UCT follows from $\oni_q^\mathbb{T}$ being AF.
\end{proof}

\begin{theorem}\label{KTheoryOni}
\[ K_*(\oni_q) \simeq K_*(\oni_0) \simeq \mathbb{Z} / (n - 1) \mathbb{Z} \oplus 0. \]
Moreover, $[1_{\oni_q}] = [1_{\oni_0}] = 1 \in \mathbb{Z} / (n - 1) \mathbb{Z}$.
\end{theorem}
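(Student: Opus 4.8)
The plan is to read off $K_*(\oni_q)$ from the crossed-product picture of Step 2, observing that the computation is insensitive to $q$. By Theorem \ref{crossedproduct} together with Theorem \ref{UHF} we may write $\oni_q \simeq A \rtimes_\beta \mathbb{N}$, where $A := \oni_q^\mathbb{T} \simeq \mathfrak{U}_{n^\infty}$ and $\beta = \mathsf{Ad}(s_1)$ is the corner endomorphism $\beta(a) = s_1 a s_1^*$. Recall that $K_0(\mathfrak{U}_{n^\infty}) \simeq \mathbb{Z}[\tfrac1n]$, with the unique trace $\tau$ inducing this identification and $[1_A] \mapsto 1$, while $K_1(\mathfrak{U}_{n^\infty}) = 0$.

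First I would invoke the six-term exact sequence attached to the crossed product by the corner endomorphism $\beta$ (equivalently, the Pimsner sequence for $\oni_q$ viewed as the Cuntz--Pimsner algebra of the correspondence defined by $\beta$; the relevant hypotheses---$\beta$ injective, $\beta(1)$ a full projection, no invariant ideals---are exactly those already verified for Theorems \ref{OrtegaEquivalenceModels} and \ref{OrtegaPF}). Using $K_1(A) = 0$ it collapses to the short exact sequence
\[ 0 \longrightarrow K_1(\oni_q) \longrightarrow K_0(A) \xrightarrow{\ \mathrm{id} - \beta_*\ } K_0(A) \longrightarrow K_0(\oni_q) \longrightarrow 0, \]
so that $K_1(\oni_q) = \ker(\mathrm{id} - \beta_*)$, $K_0(\oni_q) = \coker(\mathrm{id} - \beta_*)$, and $[1_{\oni_q}]$ is the image of $[1_A]$ under the last map.

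The heart of the argument is the computation of $\beta_*$ on $K_0(A) \simeq \mathbb{Z}[\tfrac1n]$: I claim it is multiplication by $\tfrac1n$. Identifying $K_0$ with its image under $\tau$, it suffices to show $\tau \circ \beta = \tfrac1n \tau$. Since $\beta_i := \mathsf{Ad}(s_i)$ is an endomorphism, each $\tau \circ \beta_i$ is a positive tracial functional on $\mathfrak{U}_{n^\infty}$, hence a scalar multiple $c_i \tau$ with $c_i = \tau(s_i s_i^*)$. The substitution $a_i \mapsto a_{\sigma(i)}$ preserves the $q$-CCR relations and the vacuum, so it induces a trace-preserving automorphism of $\oni_q^\mathbb{T}$ permuting the $s_i$; thus all $c_i$ coincide, and because $\sum_{i=1}^n s_i s_i^* = 1$ forces $\sum_i c_i = \tau(1) = 1$, we obtain $c_i = \tfrac1n$ and in particular $\beta_* = \tfrac1n$. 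Consequently $\mathrm{id} - \beta_*$ is multiplication by $\tfrac{n-1}{n}$ on $\mathbb{Z}[\tfrac1n]$, which is injective with cokernel $\mathbb{Z}[\tfrac1n]/(n-1)\mathbb{Z}[\tfrac1n] \simeq \mathbb{Z}/(n-1)\mathbb{Z}$. Hence $K_1(\oni_q) = 0$, $K_0(\oni_q) \simeq \mathbb{Z}/(n-1)\mathbb{Z}$, and $[1_{\oni_q}]$ is the image of $1$, namely $1 \in \mathbb{Z}/(n-1)\mathbb{Z}$.

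Finally, none of the inputs ($A \simeq \mathfrak{U}_{n^\infty}$ and $\beta_* = \tfrac1n$) depend on $q$, so the identical computation applies at $q = 0$, where $\oni_0 \simeq \mathcal{O}_n$ recovers Cuntz's classical values; this yields $K_*(\oni_q) \simeq K_*(\oni_0) \simeq \mathbb{Z}/(n-1)\mathbb{Z} \oplus 0$ with matching unit class. I expect the main obstacle to be the correct identification of the map $\mathrm{id} - \beta_*$---above all, recognizing that $\beta_* = \tfrac1n$ rather than the identity: although $s_1 p s_1^* \sim p$ as projections, the implementing partial isometry $s_1 p$ lies outside $A = \oni_q^\mathbb{T}$, so $\beta$ does act nontrivially on $K_0(A)$. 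A secondary point is locating or adapting a Pimsner--Voiculescu-type exact sequence valid for this non-unital corner-endomorphism crossed product.
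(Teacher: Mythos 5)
Your proposal is correct and takes essentially the same route as the paper: both apply the Pimsner--Voiculescu sequence for the Stacey crossed product $\oni_q \simeq \oni_q^{\mathbb{T}} \rtimes_{\mathsf{Ad}(s_1)} \mathbb{N}$ (Theorems \ref{crossedproduct} and \ref{UHF}), identify $K_0(\mathsf{Ad}(s_1))$ as multiplication by $\tfrac{1}{n}$ on $K_0(\oni_q^{\mathbb{T}}) \simeq \mathbb{Z}[\tfrac{1}{n}]$ so that $\mathrm{id}-\beta_*$ is multiplication by $\tfrac{n-1}{n}$, and read off $K_1 = 0$, $K_0 \simeq \mathbb{Z}/(n-1)\mathbb{Z}$ and the unit class. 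The only (harmless) divergence is the micro-justification of $\beta_*=\tfrac1n$: you use uniqueness of the trace on $\mathfrak{U}_{n^\infty}$ plus a permutation symmetry, while the paper averages the classes $[s_is_i^*]$, which are equal in $K_0(\oni_q^{\mathbb{T}})$ via the gauge-invariant partial isometries $s_is_j^*$, and uses $\sum_i s_is_i^* = 1$ in $\oni_q$.
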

\begin{proof}
We use Pimsner-Voiculescu sequence for Stacey crossed products:
\[
\begin{tikzcd}
K_0(\oni_q^\mathbb{T}) \arrow[rr, "1 - K_0(\mathsf{Ad}(s_1))"] & & K_0(\oni_q^\mathbb{T}) \arrow[rr, "\iota"] & & K_0(\oni_q) \arrow[d] \\
K_1(\oni_q) \arrow[u] & & K_1(\oni_q^\mathbb{T}) \arrow[ll, "\iota"] & & K_1(\oni_q^\mathbb{T}) \arrow[ll, "1 - K_1(\mathsf{Ad}(s_1))"]
\end{tikzcd}
\]
Since $K_0(\oni_q^\mathbb{T}) \simeq \mathbb{Z}[\frac{1}{n}]$, $K_0(\mathsf{Ad}(s_1))$ is determined by the image of $[1]$:
\[ K_0(\mathsf{Ad}(s_1))([1]) = [s_1 s_1^*] = \frac{1}{n} \sum_{i = 1}^n [s_i s_i^*] = \frac{1}{n} [\varphi(1)] = \frac{1}{n}[1]. \]
Thus the sequence can be rewritten:
\[
\begin{tikzcd}
\mathbb{Z}[\frac{1}{n}] \arrow[r, "\frac{n-1}{n}"] & \mathbb{Z}[\frac{1}{n}] \arrow[r, "\iota"] & K_0(\oni_q) \arrow[d] \\
K_1(\oni_q) \arrow[u] & 0 \arrow[l] & 0 \arrow[l]
\end{tikzcd}
\]
Hence
\[K_0(\oni_q) \simeq \mathbb{Z}[\frac{1}{n}] / \ker \iota \simeq \mathbb{Z}[\frac{1}{n}] / (n-1) \mathbb{Z}[\frac{1}{n}] \simeq \mathbb{Z} / (n-1) \mathbb{Z}. \]
\[K_1(\oni_q) \simeq \ker \frac{n-1}{n} = 0. \]
Since $K_0(\iota)(\frac{a}{n^b}) = a \mod \mathbb{Z} / (n-1)\mathbb{Z}$ and $[1_{\oni_q^\mathbb{T}}] = n^b \in \mathbb{Z}[\frac{1}{n}]$, \[ [1_{\oni_q}] = K_0(\iota)([1_{\oni_q^\mathbb{T}}]) = K_0(\iota)(n^b) = 1 \in \mathbb{Z} / (n-1)\mathbb{Z}. \]
\end{proof}

\begin{corollary}
\[ \oni_q \simeq \oni_0. \]
\end{corollary}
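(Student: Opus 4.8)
The plan is to apply the Kirchberg--Phillips classification theorem, Theorem~\ref{Philips}, with $A = \oni_q$ and $B = \oni_0$. All of the structural hypotheses demanded by that theorem have already been assembled in the preceding results, so the entire argument reduces to collecting them and exhibiting a compatible isomorphism of $K$-theory. I expect no genuine obstacle at this stage: the difficulty of the paper has been front-loaded into showing that $\oni_q$ is a UCT Kirchberg algebra and into the computation of Theorem~\ref{KTheoryOni}.

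First I would verify that both $\oni_q$ and $\oni_0$ are separable, nuclear, unital, purely infinite simple $C^*$-algebras satisfying the Universal Coefficient Theorem. Nuclearity, pure infiniteness, simplicity, and the UCT are precisely the content of the structural theorem immediately preceding Theorem~\ref{KTheoryOni}. Unitality is clear, since $\oni_q = \ghani_q / \mathbb{K}(\mathcal{F}^q)$ is a quotient of the unital algebra $\ghani_q$ and therefore inherits a unit. Separability holds because $\ghani_q$ is generated as a $C^*$-algebra by the finitely many operators $L_1^q, \ldots, L_n^q$ together with the identity, hence is separable, and separability passes to the quotient $\oni_q$. The same remarks apply verbatim to $\oni_0$.

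Next I would produce the required graded isomorphism $\alpha : K_*(\oni_q) \to K_*(\oni_0)$ satisfying $\alpha([1_{\oni_q}]) = [1_{\oni_0}]$. By Theorem~\ref{KTheoryOni} we have $K_0(\oni_q) \simeq K_0(\oni_0) \simeq \mathbb{Z} / (n-1)\mathbb{Z}$ and $K_1(\oni_q) = K_1(\oni_0) = 0$; moreover the classes of the two units both equal the element $1 \in \mathbb{Z} / (n-1)\mathbb{Z}$. I would therefore take $\alpha$ to be the identity on $\mathbb{Z} / (n-1)\mathbb{Z}$ in degree zero and the (necessarily trivial) map in degree one. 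This is a graded group isomorphism, and it sends $[1_{\oni_q}] = 1$ to $1 = [1_{\oni_0}]$, so the unit-preservation hypothesis of Theorem~\ref{Philips} is met without any adjustment.

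With these verifications in place, Theorem~\ref{Philips} applies directly and yields $\oni_q \simeq \oni_0$. The only point worth stating explicitly is that the matching of the two unit classes, both being $1 \in \mathbb{Z} / (n-1)\mathbb{Z}$ as recorded in Theorem~\ref{KTheoryOni}, is exactly what permits the choice of the identity as the comparison isomorphism; were the units to land on distinct generators one would first have to check that some graded automorphism of $\mathbb{Z} / (n-1)\mathbb{Z}$ intertwines them, but here no such step is needed.
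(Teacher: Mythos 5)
Your proposal is correct and follows exactly the route the paper intends for this corollary: the preceding structural theorem supplies the Kirchberg algebra and UCT hypotheses, Theorem~\ref{KTheoryOni} supplies the matching $K$-theory data with both unit classes equal to $1 \in \mathbb{Z}/(n-1)\mathbb{Z}$, and Theorem~\ref{Philips} (Kirchberg--Phillips classification) then yields the isomorphism. The paper leaves this assembly implicit; your write-up simply makes the same argument explicit.
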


\section{$\ghani_q \simeq \ghani_0$}
Consider the following extension
\[ \mathcal{E}_q : 0 \rightarrow \mathbb{K} \rightarrow \ghani_q \rightarrow \oni_q \simeq \mathcal{O}_n \rightarrow 0. \]
\begin{theorem}
\[
K_{six}^u(\mathcal{E}_q) = 
\begin{tikzcd}
\mathbb{Z} \arrow[rr, "\cdot (n-1)"] & & \mathbb{Z} \arrow[rr, "\mod (n-1)"] & & \mathbb{Z}_{n-1} \arrow[d] \\
0 \arrow[u] & & 0 \arrow[ll] & & 0 \arrow[ll]
\end{tikzcd}
\]
with $[1_{\ghani_q}] = 1 \in \mathbb{Z}$ and $[1_{\oni_q}] = 1 \in \mathbb{Z}_{n-1}$.
\end{theorem}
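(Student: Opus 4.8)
The plan is to feed the extension $\mathcal{E}_q$ into the standard cyclic six-term exact sequence in $K$-theory and then to remove the remaining ambiguity by an explicit computation of the inclusion map on $K_0$. First I would write out the six-term sequence attached to $0 \to \mathbb{K} \to \ghani_q \to \oni_q \to 0$ and substitute the groups already known: $K_*(\mathbb{K}) = (\mathbb{Z},0)$, and, using $\oni_q \simeq \mathcal{O}_n$ together with Theorem \ref{KTheoryOni}, $K_*(\oni_q) = (\mathbb{Z}_{n-1},0)$. Since the two $K_1$ groups at the corners vanish, exactness forces $K_1(\ghani_q)=0$, makes the index and exponential maps zero, and leaves the short exact sequence
\[ 0 \longrightarrow \mathbb{Z} \xrightarrow{\;i_*\;} K_0(\ghani_q) \xrightarrow{\;\pi_*\;} \mathbb{Z}_{n-1} \longrightarrow 0 . \]

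At this stage $K_0(\ghani_q)$ is determined only as an extension of $\mathbb{Z}_{n-1}$ by $\mathbb{Z}$, and since $\mathrm{Ext}(\mathbb{Z}_{n-1},\mathbb{Z}) \cong \mathbb{Z}_{n-1}$ the sequence alone does not distinguish $\mathbb{Z}$ from $\mathbb{Z}\oplus\mathbb{Z}_{n-1}$ or any intermediate extension. Resolving this ambiguity is the whole content of the proof, and it is where I expect the only real difficulty to lie: without additional input one cannot read off the extension class.

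To pin it down I would exhibit genuine Cuntz isometries inside $\ghani_q$. By the $\mathbb{T}$-equivariant inclusion $\mathbb{K}\mathcal{O}_n \subset \ghani_q$ (Theorem 4.3 of \cite{dn}), there are isometries $s_1,\ldots,s_n$ with mutually orthogonal ranges for which $P_\Omega = 1 - \sum_{i=1}^n s_i s_i^*$ is the rank-one projection onto the vacuum line; this $P_\Omega$ generates $K_0(\mathbb{K})$. Because each $s_i$ is an isometry, $s_i s_i^*$ is Murray--von Neumann equivalent to $s_i^* s_i = 1$, so $[s_i s_i^*] = [1_{\ghani_q}]$ and therefore
\[ i_*([P_\Omega]) \;=\; [1_{\ghani_q}] - \sum_{i=1}^n [s_i s_i^*] \;=\; (1-n)\,[1_{\ghani_q}]. \]
Consequently $\mathrm{im}(i_*) = \ker \pi_*$ is generated by $(n-1)[1_{\ghani_q}]$.

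Finally I would close with a short group-theoretic argument. Put $H = \langle [1_{\ghani_q}] \rangle \subseteq K_0(\ghani_q)$. Since $\pi_*([1_{\ghani_q}]) = [1_{\oni_q}] = 1$ generates $\mathbb{Z}_{n-1}$, the restriction $\pi_*|_H$ is surjective, while $\ker\pi_* = (n-1)H \subseteq H$; hence any class agrees with a multiple of $[1_{\ghani_q}]$ up to an element of $\ker\pi_* \subseteq H$, forcing $K_0(\ghani_q)=H$. Injectivity of $i_*$ shows $(n-1)[1_{\ghani_q}] \neq 0$, so $[1_{\ghani_q}]$ has infinite order and $K_0(\ghani_q)\cong\mathbb{Z}$, generated by $[1_{\ghani_q}]$. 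Reading off the maps, $i_*$ is multiplication by $1-n$ (equivalently $\cdot(n-1)$, after orienting the generator of $K_0(\mathbb{K})$) and $\pi_*$ is reduction modulo $n-1$, with distinguished elements $[1_{\ghani_q}] = 1 \in \mathbb{Z}$ and $[1_{\oni_q}] = 1 \in \mathbb{Z}_{n-1}$, exactly the asserted diagram.
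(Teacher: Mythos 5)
Your proof is correct, and it reaches the theorem by a genuinely different route at the crucial step. Both you and the paper start from the same six-term sequence and both exploit the relation $P_\Omega = 1_{\ghani_q} - \sum_i s_i s_i^*$, but you differ in how the ambiguity in the extension $0 \to \mathbb{Z} \to K_0(\ghani_q) \to \mathbb{Z}_{n-1} \to 0$ is resolved. The paper argues at the level of $C^*$-algebras: if $K_0(\ghani_q)$ had torsion, the group extension would split, hence (implicitly via UCT-type reasoning) $\mathcal{E}_q$ would be a trivial essential extension, which is then ruled out by Voiculescu's theorem and the presence of $P_\Omega$. You instead stay entirely inside K-theory: from $[s_i s_i^*] = [s_i^* s_i] = [1_{\ghani_q}]$ you get $i_*([P_\Omega]) = (1-n)[1_{\ghani_q}]$, so $\ker \pi_* = \mathrm{im}\, i_* \subseteq \langle [1_{\ghani_q}] \rangle$; since $\pi_*([1_{\ghani_q}]) = 1$ generates $\mathbb{Z}_{n-1}$, the subgroup $\langle [1_{\ghani_q}] \rangle$ is all of $K_0(\ghani_q)$, and a cyclic group containing a copy of $\mathbb{Z}$ is infinite cyclic. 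This buys you two things: you bypass the paper's most delicate step --- the passage from splitting of the K-theory sequence to triviality of the $C^*$-extension, together with the questionable identification of a split essential extension with the direct sum $\mathbb{K} \oplus \oni_q$ --- and you obtain $K_0(\ghani_q) \cong \mathbb{Z}$, the maps, and $[1_{\ghani_q}] = 1$ in one stroke, whereas the paper needs a separate arithmetic argument ($[1_{\ghani_q}] = 1 + k(n-1)$, elimination of $k$), which as printed contains sign slips such as $[P_\Omega] = [\sum S_i S_i^*]$. One point to keep flagged: with the canonical generator $[P_\Omega]$ of $K_0(\mathbb{K})$, the inclusion induces multiplication by $1-n$ rather than $n-1$; as you note, this is only a choice of orientation of the generator, and it is immaterial for the intended application to the Gabe--Ruiz theorem, since the same sign occurs for every $q$, in particular for both $\mathcal{E}_q$ and $\mathcal{E}_0$, so the congruence of $K_{six}^u(\mathcal{E}_q)$ and $K_{six}^u(\mathcal{E}_0)$ is unaffected.
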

\begin{proof}
Apply six-term exact sequence to the extension $\mathcal{E}_q$:
\[
K_{six}(\mathcal{E}_q) = \mathcal{E}_q = 
\begin{tikzcd}
\mathbb{Z} \arrow[rr] & & K_0(\ghani_q) \arrow[rr] & & \mathbb{Z}_{n-1} \arrow[d] \\
0 \arrow[u] & & K_1(\ghani_q) \arrow[ll] & & 0 \arrow[ll]
\end{tikzcd}
\]
There are two types of group extensions of $\mathbb{Z}_{n-1}$ by $\mathbb{Z}$ : they are either $\mathbb{Z}$ or $\mathbb{Z} \oplus \mathbb{Z}_{n-1}$. If the second is the case then $\mathcal{E}_q$ is a trivial extension. Since it is also essential, by Voiculescu theorem that would mean that $\ghani_q \simeq \mathbb{K} \oplus \oni_q$, which is not the case, because $P_\Omega = 1_{\ghani_q} - \sum S_i S_i^*$. Thus
\[ K_0(\ghani_q) \simeq \mathbb{Z}, \ K_1(\ghani_q) \simeq 0. \]

In Theorem \ref{KTheoryOni} we have already shown that $[1_{\oni_q}] = 1$. Since all surjective maps $\mathbb{Z} \rightarrow \mathbb{Z}_{n-1}$ has form $\Lambda_b : a \mapsto ba \mod (n-1)$ for some $b$ coprime with $n-1$ and $1_{\ghani_q}$ maps to $[1_{\oni_q}]$, we have that
\[
K_{six}(\mathcal{E}_q) = 
\begin{tikzcd}
\mathbb{Z} \arrow[rr, "\cdot \pm(n-1)"] & & \mathbb{Z} \arrow[rr, "\Lambda_b"] & & \mathbb{Z}_{n-1} \arrow[d] \\
0 \arrow[u] & & 0 \arrow[ll] & & 0 \arrow[ll]
\end{tikzcd}
\]
and $[1_{\ghani_q}] = 1 + k(n-1)$ for some $k \in \mathbb{Z}$. Since $P_\Omega$ is minimal projection in $\mathbb{K} \subset \ghani_q$ and $K_0(\mathbb{K}) \hookrightarrow K_0(\ghani_q)$ is injective, $[P_\Omega]\mathbb{Z} = (n-1)\mathbb{Z}$. But then
\[ \pm (n-1) = [P_\Omega] = [\sum S_i S_i^*] = [\sum S_i^* S_i] = n[1_{\ghani_q}], \]
so either $n(1 + k(n-1)) = n-1$ or $n(1 + k(n-1)) = -n+1$ which holds only for $k = 0$. Thus $[1_{\ghani_q}] = 1$.
\end{proof}
Since considerations of the theorem above did not depend of $q$, by Theorem \ref{Gabe}
\begin{corollary}\label{mainiso}
\[ \ghani_q \simeq \ghani_0. \]
\end{corollary}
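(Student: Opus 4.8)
The plan is to apply the Gabe--Ruiz classification of unital extensions (Theorem \ref{Gabe}) to the two extensions $\mathcal{E}_q$ and $\mathcal{E}_0$. Both have the shape
\[ \mathcal{E}_i : 0 \rightarrow B \rightarrow E_i \rightarrow A \rightarrow 0 \]
with middle terms $E_1 = \ghani_q$ and $E_2 = \ghani_0$, a common ideal $B = \mathbb{K}$, and a common quotient $A = \oni_q \simeq \oni_0 \simeq \mathcal{O}_n$ (the identification of quotients being furnished by the preceding corollary). Thus only two things must be checked: that the structural hypotheses of Theorem \ref{Gabe} hold, and that $K_{six}^u(\mathcal{E}_q)$ is congruent to $K_{six}^u(\mathcal{E}_0)$.

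For the structural hypotheses, first I would note that the quotient $A \simeq \mathcal{O}_n$ is a unital UCT Kirchberg algebra: by the results already established it is unital, separable, nuclear, simple and purely infinite, and it satisfies the UCT. Second, the ideal $B = \mathbb{K}$ is a stable AF algebra, being the inductive limit of the matrix algebras $M_k$ and satisfying $\mathbb{K} \simeq \mathbb{K} \otimes \mathbb{K}$.

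The remaining, and genuinely decisive, step is the congruence of the two six-term sequences with distinguished elements. For this I would invoke the preceding theorem, which computes $K_{six}^u(\mathcal{E}_q)$ together with the classes $[1_{\ghani_q}] = 1 \in \mathbb{Z} = K_0(B)$ and $[1_{\oni_q}] = 1 \in \mathbb{Z}_{n-1} = K_0(A)$. The point is that the entire diagram --- all six groups, both connecting homomorphisms, and both distinguished elements --- is manifestly independent of $q$, so specializing to $q = 0$ yields literally the same sequence. A congruence in the required sense, namely an isomorphism of the form $(\mathrm{id}_{K_*(B)}, \rho_*, \mathrm{id}_{K_*(A)})$, is then obtained by taking $\rho_* = \mathrm{id}$ on the middle groups; this trivially intertwines the identical maps and matches the distinguished classes. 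Theorem \ref{Gabe} then gives $\ghani_q \simeq \ghani_0$.

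I expect essentially no obstacle at this final stage, since all the substantive analytic and $K$-theoretic work has already been carried out upstream: identifying $\oni_q$ as a UCT Kirchberg algebra and computing $K_0(\ghani_q)$ along with the position of the unit. The only point demanding mild care is that the notion of congruence in Theorem \ref{Gabe} requires the comparison isomorphism to fix $K_*(B)$ and $K_*(A)$ pointwise and to respect the distinguished elements, rather than being an abstract isomorphism of exact sequences; but this holds automatically here precisely because the two computed diagrams coincide on the nose, including on the unit classes.
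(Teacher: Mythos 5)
Your proposal is correct and follows essentially the same route as the paper: the paper's proof of Corollary \ref{mainiso} consists precisely of observing that the computation of $K_{six}^u(\mathcal{E}_q)$ in the preceding theorem (including the unit classes $[1_{\ghani_q}]$ and $[1_{\oni_q}]$) is independent of $q$, so the two sequences are congruent via identity maps, and then invoking Theorem \ref{Gabe}. Your additional verification of the structural hypotheses (that $\mathcal{O}_n$ is a unital UCT Kirchberg algebra and $\mathbb{K}$ is a stable AF algebra) is left implicit in the paper but is exactly the right thing to check.
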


\medskip 

{\bf Acknowledgement. } Thank you Quantum Fox for sharing this journey with me.

\end{document}